\title{A Torelli theorem for graphs via quasistable divisors}
\author{Alex Abreu and Marco Pacini}
    \newtheorem{Lem}{Lemma}[section]
    \newtheorem{Lem-Def}{Lemma-Definition}[section]
    \newtheorem{Prop}[Lem]{Proposition}
    \newtheorem*{thm}{Theorem}
    \newtheorem{Thm}[Lem]{Theorem}  
    \newtheorem{Cor}[Lem]{Corollary}
		\newtheorem*{Thm*}{Theorem}
\theoremstyle{definition}
    \newtheorem{Def}[Lem]{Definition}
    \newtheorem{Rem}[Lem]{Remark}
\newcommand{\ra}{\rightarrow}
\newcommand{\E}{\mathcal E}
\newcommand{\col}{\colon}
\newcommand{\ol}{\overline}
\newcommand{\wt}{\widetilde}
\newcommand{\QS}{\mathbf{QD}}
\newcommand{\PosFix}{\mathbf P}
\newcommand{\PosFixx}{\mathbf R}
\DeclareMathOperator{\bridges}{Br}
\DeclareMathOperator{\ND}{ND}
\DeclareMathOperator{\rk}{rk}
\DeclareMathOperator{\val}{val}
\DeclareMathOperator{\Div}{Div}
\DeclareMathOperator{\Del}{Del}
\DeclareMathOperator{\can}{can}
\DeclareMathOperator{\quasi}{qs}
\begin{document}

\maketitle

\begin{abstract}
    The Torelli theorem establishes that the Jacobian of a smooth projective curve, together with the polarization provided by the theta divisor, fully characterizes the curve.
In the case of nodal curves, there exists a concept known as fine compactified Jacobian. The fine compactified Jacobian of a curve comes with a natural stratification that can be regarded as a poset. Furthermore, this poset is entirely determined by the dual graph of the curve and is referred to as the poset of quasistable divisors on the graph.
We present a combinatorial version of the Torelli theorem, which demonstrates that the poset of quasistable divisors of a graph completely determines the biconnected components of the graph (up to contracting separating edges). Moreover, we achieve a natural extension of this theorem to tropical curves.
\end{abstract}

MSC (2020): 05Cxx, 14Hxx

\section{Introduction}

The classical Torelli theorem states that if $C$ and $C'$ are two genus $g$ smooth projective curves whose Jacobian varieties are isomorphic (as principally polarized abelian varieties), then $C$ and $C'$ are isomorphic. 
For nodal curves, a variant of the Torelli theorem emerges considering compactified Jacobians. 
In \cite{CVCurves}, Caporaso and Viviani proved that a stable curve can be reconstructed from its Caporaso compactified Jacobian and theta divisor, provided that its dual graph is $3$-edge connected. We refer to \cite{CapJac} for the construction of the compactified Jacobian and to \cite{CapTheta} for a study of the theta divisor of the compactified Jacobian. The main result in \cite{CVCurves} is based on a previous combinatorial result proved in \cite{CVGraphs}, stating that it is possible to reconstruct a  graph from its Albanese variety, provided the graph is $3$-vertex connected (this resolved a question posed in \cite{BHN}), see also \cite{Art}.\par

 More general results are also proved in \cite{CVGraphs} and \cite{CVCurves}: two stable curves without separating nodes have isomorphic compactified Jacobians toghether with theta divisors if and only if the curves are $C1$-equivalent (see \cite[Definition 2.1.5]{CVCurves} for the definition of $C1$-equivalence). The general statement for graphs is: two graphs without bridges have isomorphic Albanese varieties if and only if the graphs are cyclically equivalent.  The observation connecting the two results is that if the compactified  Jacobians of two stable curves are isomorphic, then the Albanese varieties of the dual graphs of the curves are isomorphic as well.\par 

 The question that motivated this paper is: 

 \smallskip

 \begin{equation}
 \label{eq:question}
\text{\emph{can one get a more refined Torelli theorem by considering other compactified Jacobians?}}
\end{equation}

\smallskip

In this paper we answer a combinatorial version of the above  question. We consider Esteves compactified Jacobian of a nodal curve, parametrizing quasistable torsion-free rank-1 sheaves of fixed degree on a curve, constructed in \cite{EE01}. Both Caporaso and Esteves compactified Jacobians for a nodal curve are instances of Oda-Seshadri construction of compactified Jacobians constructed in \cite{OS} (see \cite{Alexeev} and \cite[Section 6]{EE01}).

%Caporaso and Esteves compactified Jacobians are different compactifications.
 
 In \cite{CVGraphs}, a crucial ingredient in the proof of Torelli theorem for graphs is the Delaunay decomposition $\Del(\Gamma)$ of a graph $\Gamma$ and its associated poset (i.e., partially ordered set) $\overline{\mathcal{OP}}_\Gamma$. The poset $\overline{\mathcal{OP}}_{\Gamma}$ is the poset encoding the natural stratification of the Caporaso compactified Jacobian of a curve with dual graph $\Gamma$ (see \cite[Lemma 4.1.6]{CVGraphs}). For a $3$-edge connected graph $\Gamma$, the Delaunay decomposition $\Del(\Gamma)$  determines and is determined by the poset $\overline{\mathcal{OP}}_\Gamma$. The key results are that the Albanese variety of a graph determines its Delaunay decomposition and, if the graph is $3$-edge connected, the Delaunay decomposition only depends from the cyclic equivalence class of the graph. The general statement of this result can be found in \cite[Theorem 5.3.2]{CVGraphs}.  
  
The Esteves compactified Jacobian exhibits a natural stratification that can be viewed as a poset. This is the poset $\QS(\Gamma)$ of quasistable (pseudo-)divisors of degree $g-1$ on the dual graph $\Gamma$ of the curve, which corresponds to the multidegrees of quasistable torsion-free rank-1 sheaves of degree $g-1$ on the curve. In this paper we prove that this poset plays a crucial role in characterizing the nodal curve. Remarkably, the poset structure  entirely determines  the dual graph of the curve. Thus, by studying the poset of quasistable divisors, one can gain insights into the topology and combinatorial properties of the curve itself.\par

Note worthy, the poset $\QS(\Gamma)$ is the poset induced by a refinement of the Delaunay decomposition $\Del(\Gamma)$ of $\Gamma$. This refinement holds more combinatorial information about the graph than the Delaunay decomposition. Hence, it is expected a more refined Torelli theorem for graphs using the poset $\QS(\Gamma)$. The main theorem of this paper is the following result. 

\begin{thm}[Theorem \ref{thm:main1}]
Let $\Gamma$ and $\Gamma'$ be graphs with set of bridges $\bridges(\Gamma)$ and $\bridges(\Gamma')$. The posets $\QS(\Gamma)$ and $\QS(\Gamma')$ are isomorphic if and only if there is a bijection between the biconnected components of $\Gamma/\bridges(\Gamma)$ and $\Gamma'/\bridges(\Gamma')$ such that the corresponding components are isomorphic as pure graphs. 
\end{thm}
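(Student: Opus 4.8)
The plan is to prove both implications by reducing to the case of a single biconnected graph and then carrying out an explicit reconstruction there. The first step, which serves both directions, is to establish two structural properties of the assignment $\Gamma \mapsto \QS(\Gamma)$: that it is insensitive to bridges, and that it converts the decomposition of a graph into biconnected components into a direct product of posets. For the first, I would show that contracting a bridge induces an isomorphism $\QS(\Gamma) \cong \QS(\Gamma/\bridges(\Gamma))$; the point is that a bridge lies in no cycle, so it never contributes to the codimension of a stratum (which equals the number of independent cycles of $\Gamma$ destroyed by the degeneration, i.e. $b_1(\Gamma) - b_1(\Gamma-E)$), and its contribution to any quasistable pseudo-divisor is forced, giving a canonical order-preserving bijection. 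For the second, if $\Gamma$ is obtained by gluing biconnected components $B_1,\dots,B_k$ along cut vertices, then a quasistable pseudo-divisor is determined by its restrictions to the $B_i$ and the order is componentwise, so $\QS(\Gamma) \cong \QS(B_1) \times \cdots \times \QS(B_k)$, the distribution of degree at the cut vertices being pinned down by the reference vertex. Combining the two yields $\QS(\Gamma) \cong \prod_i \QS(B_i)$, with $B_i$ the biconnected components of $\Gamma/\bridges(\Gamma)$.

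The backward implication is then immediate: a bijection of biconnected components with $B_i \cong B'_{\sigma(i)}$ induces $\QS(B_i) \cong \QS(B'_{\sigma(i)})$ by functoriality, hence an isomorphism of the product posets. For the forward implication, I would invoke the uniqueness of the direct-product decomposition of a connected poset into directly indecomposable factors (Hashimoto's refinement theorem), after checking that $\QS(\Gamma)$ is connected, to conclude that an isomorphism $\QS(\Gamma) \cong \QS(\Gamma')$ matches the factors $\QS(B_i)$ with the factors $\QS(B'_j)$. This requires the separate lemma that $\QS(B)$ is directly indecomposable exactly when $B$ is biconnected, which follows from the product formula together with a direct-indecomposability criterion in the biconnected case. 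At this point the problem is reduced to the key statement: for biconnected graphs $B$ and $B'$, an isomorphism $\QS(B) \cong \QS(B')$ forces $B \cong B'$ as pure graphs.

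To prove this reduced Torelli statement I would proceed in two stages. First, recover the cyclic equivalence class of $B$: since $\QS(B)$ is a refinement of the Delaunay decomposition $\Del(B)$, I would exhibit $\overline{\mathcal{OP}}_{B}$ as a canonical coarsening of $\QS(B)$, collapsing the fibers of the refinement (recognizable by poset-intrinsic data such as the grading and the covering relations), and then apply \cite[Theorem 5.3.2]{CVGraphs} to obtain the cycle matroid, equivalently the cyclic equivalence class of $B$. Second, and this is where the extra information in $\QS$ must be used, I would upgrade the matroid to the actual graph by resolving the Whitney twists at $2$-separations that a priori separate cyclically equivalent graphs. The mechanism is that a quasistable divisor is a function on the \emph{vertex} set, so the minimal elements of $\QS(B)$ and their incidences carry vertex data that the edge-only Delaunay structure discards; I would extract from these the partition of half-edges into vertices together with the reference vertex, which pins down $B$ within its cyclic equivalence class.

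The main obstacle is this second stage: proving that the vertex-level data genuinely recoverable from $\QS(B)$ suffices to detect Whitney twists, i.e. that two biconnected, cyclically equivalent graphs with isomorphic quasistable posets must in fact be isomorphic. This is precisely the content that the finer poset buys us over the Delaunay/Albanese approach of \cite{CVGraphs}, and it is where most of the combinatorial work will lie; the reductions of the first two paragraphs, while demanding care with the quasistability condition and the choice of reference vertex, are structural and comparatively routine.
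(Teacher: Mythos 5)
Your reductions in the first two paragraphs do match the paper's (bridges are invisible to $\QS$, and $\QS(\Gamma)\cong\prod_i\QS(B_i)$ over biconnected components), and the backward implication is fine. But the forward direction, which is the substance of the theorem, has two genuine gaps. First, your factor-matching step rests on an unproven lemma: that $\QS(B)$ is directly indecomposable as a poset when $B$ is biconnected. You defer this to ``a direct-indecomposability criterion in the biconnected case'' without supplying one, and it is not a routine consequence of the product formula. The paper avoids Hashimoto entirely: it first extracts from the poset isomorphism $f$ a weakly cyclic equivalence $f_E\col \ND(\Gamma)\to\ND(\Gamma')$ (its Proposition \ref{prop:fE}), which already induces the bijection on biconnected components, and then proves by induction on the number of components that $f$ is compatible with the splitting $\QS_{v_0}(\Gamma)\cong\QS_{v_0}(\Gamma_1)\times\QS_{v_0}(\Gamma_2)$ at an articulation vertex, by tracking which factor an elementary specialization over a given edge lives in.

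Second, and more seriously, your biconnected Torelli statement is not proved but only located: you acknowledge that resolving Whitney twists from the vertex-level data in $\QS(B)$ is ``where most of the combinatorial work will lie,'' and you give no mechanism for it. Even your stage one is shakier than you suggest: exhibiting $\overline{\mathcal{OP}}_B$ as a poset-intrinsic coarsening of $\QS(B)$ is not carried out, and any attempt to recover ``the partition of half-edges into vertices'' must contend with the fact that the two edges of a special pair (a pair of parallel edges not parallel to any third edge) are genuinely indistinguishable in $\QS(B)$ — the paper's construction of $f_E$ is delicate precisely because of this ambiguity (its Propositions \ref{prop:P0}, \ref{prop:image_parallel}, \ref{prop:alternativa} and the equivalence relation of Definition \ref{def:sim} exist to control it). The paper's actual reconstruction of the graph is also different from what you sketch: rather than passing through the Delaunay poset and \cite{CVGraphs}, it characterizes the edge-stars $E(v_1)$ purely order-theoretically — Lemma \ref{lem:vertex} produces, for a non-articulation vertex $v_1$, a quasistable $(\E_1,D_1)$ admitting a \emph{unique} lift over every proper subset $S\subsetneqq E(v_1)$, and Lemma \ref{lem:bond} shows conversely that a bond with this uniqueness property must be the star of a vertex — and then transports this characterization through $f$ and $f_E$. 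Without an argument of comparable force for your Whitney-twist step, the proof is incomplete at its central point.
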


In particular, a pure biconnected graph $\Gamma$ can be reconstructed from its poset $\QS(\Gamma)$. Hence, for pure biconnected graphs, we get a more refined Torelli theorem. Indeed, there are nonisomorphic $3$-edges connected biconnected graphs $\Gamma$ and $\Gamma'$ that are cyclic equivalent, and hence, by the result of Caporaso and Viviani, the poset $\overline{\mathcal{OP}}_\Gamma$ and $\overline{\mathcal{OP}}_{\Gamma'}$ are isomorphic, while $\QS(\Gamma)$ and $\QS(\Gamma')$ are not.

As a byproduct, we get a Torelli theorem for tropical curves. We prove that the tropical Jacobian $J(X)$ of a tropical curve $X$, together with its decomposition via quasistable divisors, determines the biconnected components of the  tropical curve. 

\begin{thm}[Theorem \ref{thm:main2}]
    Let $X$ and $X'$ be tropical curves without bridges such that $J(X)$ and $J(X')$ are isomorphic as polyhedral complexes (with the structure of polyhedral complexes given by the poset of quasistable divisor on the underlying graph). There is a bijection between the biconnected components of $X$ and $X'$ such that the corresponding components are isomorphic.
\end{thm}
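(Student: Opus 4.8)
The plan is to reduce the statement to the combinatorial Theorem~\ref{thm:main1} and then to promote the resulting abstract graph isomorphism to an isomorphism of tropical curves by reading the edge lengths off the metric structure of the polyhedral complex.

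First I would extract the combinatorial data. By construction the face poset of the polyhedral complex $J(X)$ is the poset $\QS(\Gamma)$ of quasistable divisors on the underlying graph $\Gamma$ of $X$. Hence an isomorphism $F\colon J(X)\xrightarrow{\sim}J(X')$ of polyhedral complexes induces an isomorphism of face posets, that is, $\QS(\Gamma)\cong\QS(\Gamma')$. Since $X$ and $X'$ have no bridges we have $\Gamma/\bridges(\Gamma)=\Gamma$ and $\Gamma'/\bridges(\Gamma')=\Gamma'$, so Theorem~\ref{thm:main1} applies and yields a bijection $\sigma$ between the biconnected components $\{\Gamma_i\}$ of $\Gamma$ and $\{\Gamma'_j\}$ of $\Gamma'$, realized by isomorphisms $\phi_i\colon\Gamma_i\xrightarrow{\sim}\Gamma'_{\sigma(i)}$ of the underlying pure graphs, forgetting the metric.

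It remains to recover the metric. The decomposition $H_1(\Gamma,\mathbb{R})=\bigoplus_i H_1(\Gamma_i,\mathbb{R})$ induces a product decomposition $J(X)=\prod_i J(X_i)$ of polyhedral complexes, mirrored on the combinatorial side by a factorization of $\QS(\Gamma)$ into the posets $\QS(\Gamma_i)$. Because these indecomposable factors are canonically attached to the complex, $F$ respects the product up to the bijection $\sigma$, and I may treat one biconnected component at a time. For a fixed block, the edge lengths of $X_i$ are encoded in the integral-affine geometry of the cells of $J(X_i)$: each edge $e$ of $\Gamma_i$ determines a translation direction in $H_1(\Gamma_i,\mathbb{R})$ along which the incident cells have extent $\ell(e)$. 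This is transparent for a single cycle, whose Jacobian is a circle subdivided into arcs of lengths equal to the edge lengths, and it is precisely what distinguishes the present, finer decomposition from the coarser Delaunay one used by Caporaso and Viviani, whose flat metric alone does not determine the individual edge lengths. Since $F$ restricts to an integral-affine isomorphism, hence an isometry, on each cell, and is compatible with the identification of edges furnished by Theorem~\ref{thm:main1}, the bijection $\phi_i$ matches these extents. Therefore each $\phi_i\colon X_i\xrightarrow{\sim}X'_{\sigma(i)}$ is an isomorphism of tropical curves, as required.

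The step I expect to be the main obstacle is this last one: pinning down the precise dictionary between the integral-affine geometry of the cells of $J(X)$ and the edge lengths of $X$, and verifying that the edge identification extracted from the metric coincides with the purely combinatorial one produced inside the proof of Theorem~\ref{thm:main1}. Routing the argument through the canonical product decomposition over biconnected components, so that one only ever compares a single block with its image, should keep this compatibility check manageable.
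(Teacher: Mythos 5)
Your first half is exactly the paper's: the face poset of $J(X)$ is $\QS(\Gamma_X)$, so a polyhedral-complex isomorphism gives $\QS(\Gamma_X)\cong\QS(\Gamma_{X'})$, and Theorem \ref{thm:main1} produces the bijection of biconnected components with graph isomorphisms. The divergence is in the metric-recovery step, which you correctly flag as the main obstacle but do not actually close. The paper's resolution is much more elementary than the integral-affine/$H_1$ machinery you invoke: the $1$-dimensional cells of the complex are literally the segments $\mathcal{P}_X(\{e\},D)=[0,\ell(e)]$ indexed by the rank-one pseudo-divisors, and the isomorphism of polyhedral complexes must carry the cell indexed by $(\{e\},D)$ to the cell indexed by $f(\{e\},D)=(f_E(e),D')$; an isomorphism of segments forces $\ell(e)=\ell(f_E(e))$. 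No product decomposition of $J(X)$ and no discussion of translation directions is needed.

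The genuine gap in your argument is the special pairs. The edge bijection $f_E$ coming out of Proposition \ref{prop:fE} is canonical only up to swapping the two edges of a special pair $\{e_1,e_2\}$ (a pair of parallel edges with no further parallels), so your claim that ``the bijection $\phi_i$ matches these extents'' cannot be verified edge by edge: from the $2$-cell $\mathcal{P}_X(\{e_1,e_2\},D)\cong[0,\ell(e_1)]\times[0,\ell(e_2)]$ one only extracts the multiset equality $\{\ell(e_1),\ell(e_2)\}=\{\ell(f_E(e_1)),\ell(f_E(e_2))\}$, and it may happen that the combinatorial isomorphism pairs $e_1$ with the edge of the wrong length. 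The paper repairs this by observing that $e_1$ and $e_2$ are exchanged by an automorphism of $\Gamma_X$ (and likewise their images in $\Gamma_{X'}$), so one can recompose with that automorphism to make the lengths match. Without this step the conclusion that each $\phi_i$ is an isomorphism of \emph{tropical} curves does not follow; with it, your outline becomes the paper's proof, and the $H_1$-based detour can be discarded.
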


%We end this introduction by showing that Theorem \ref{thm:main1} together with the results in \cite{CVCurves} is enough to give an affirmative answer to the Question \ref{??}. Indeed, if $(C,p)$ and $(C',p')$ are two pointed stable curves with isomorphic Esteves compactified Jacobians $(\J^{qs}_p(C), \Theta_C)$ and $(\J^{qs}_{p'}(C'), \Theta_{C'})$ (with canonical degree-$(g-1)$ polarization), then they have isomorphic Caporaso's compactified Jacobians $(\overline{P}^{g-1}(C),\Theta(C))$ and $(\overline{P}^{g-1},\theta(C)$ and the dual graphs $\Gamma_C$ and $\Gamma_{C'}$ have isomorphic

We conclude this introduction with some remarks regarding Question \eqref{eq:question}. The combinatorial result provided by Theorem \ref{thm:main1} implies that a geometric Torelli Theorem utilizing fine compactified Jacobians should be distinct and potentially more refined than the result obtained by Caporaso and Viviani in their work \cite{CVCurves}. 
So far we did not found examples of curves with no separating nodes whose fine compactified Jacobian are isomorphic (together with the theta divisor).

\section{Preliminaries}

\subsection{Posets}

 In this paper we will only consider finite posets. Given a poset $(P,\leq_P)$ and a subset $S\subset P$, the \emph{induced partial order} $\leq_S$ on $S$ is given by $x\leq_Sy$ for $x,y\in S$ if and only if $x\leq_P y$ in $P$. We refer to $(S,\leq_S)$ as the \emph{induced subposet}.

A \emph{lower set} of a poset $(P,\leq_P)$ is a set $U\subset P$ such that whenever $x\in U$ and $y\leq_Px$, then $y\in U$. We define a topology on the poset $P$ where the closed subsets are the lower sets.

We say that an element $x$ \emph{covers} an element $y$ of $P$ if $x>_P y$ and there are no $z\in P$ such that $x<_P z<_P y$.
A poset is called \emph{ranked} if all the maximal chains have the same length. A ranked poset $P$ comes equipped with a rank function $\rk\col P\to \mathbb{Z}$ such that $\rk(x)=\rk(y)+1$ whenever $x$ covers $y$ and $\rk(x)=0$ whenever $x$ is a minimal element of $P$. The \emph{Hasse diagram} of a poset is the oriented graph whose vertices are the elements of $P$ and oriented edges are from $x$ to $y$ whenever $y$ covers $x$. \par 
  A \emph{morphism} between posets $P$ and $P'$ is an order-preserving function (or, equivalently, a  continuous function) $f\col P\to P'$. Moreover, we say that $f$ \emph{preserves the cover relations} if $f(x)$ covers $f(y)$ whenever $x$ covers $y$, for $x,y\in P$. If $P$ and $P'$ are ranked, then we say that $f$ is a \emph{morphism of ranked posets} if $\rk(f(x))=\rk(x)$ for every $x\in P$. An \emph{isomorphism} of posets is a morphism of posets admitting an inverse morphism. As usual, a morphism of posets is closed if it takes closed subsets to closed subsets. 
  
  \begin{Rem}\label{rem:closed}
  Notice that $f\col P\ra P'$ is a closed morphism of posets, if and only if, for any $x\in P$ and $y'\in P'$ such that $y'\leq_{P'} f(x)$  there exists $y\in P$ such that $y\leq_P x$ and $f(y)=y'$.
\end{Rem}

\subsection{Graphs}

Let $\Gamma$ be a graph. We denote by $V(\Gamma)$ and $E(\Gamma)$  the sets of vertices and edges of $\Gamma$, and $w_\Gamma\col V(\Gamma)\ra \mathbb Z_{\ge0}$ the weight function of $\Gamma$. A graph is \emph{pure} if $w_\Gamma(v)=0$ for every $v\in V(\Gamma)$.  
Given a subset $V\subset V(\Gamma)$, we set $V^c:=V(\Gamma)\setminus V$.  For subsets $V,W\subset V(\Gamma)$, we define $E(V,W)$ as the set of edges of $\Gamma$ connecting a vertex in $V$ with a vertex in $W$. In particular, $E(V,V)$ is the set of edges connecting two (possibly coinciding) vertices of $V$. We set $\delta_V=|E(V,V^c)|$. We also denote by $\Gamma(V)$ the subgraph of $\Gamma$ whose set of vertices is $V$ and whose set of edges is $E(V,V)$. The edges $e_1,e_2\in E(\Gamma)$ are \emph{parallel} if there are two vertices incident to both $e_1$ and $e_2$. An \emph{end-vertex} of an edge $e$ is a vertex which is incident to $e$.

For a vertex $v\in V(\Gamma)$, we let $E(v)$ be the set of edges of $\Gamma$ that are incident to $v$. Moreover, we let $\Gamma\setminus\{v\}$ be the subgraph of $\Gamma$ with set of vertices equal to $V(\Gamma)\setminus\{v\}$ and set of edges equal to $E(\Gamma)\setminus E(v)$. For a subset $\E\subset E(\Gamma)$ and a vertex $v\in V(\Gamma)$, we define $\val_\E(v)$ to be the number of edges of $\E$ incident to $v$, with loops counted twice. We set $\val(v):=\val_{E(\Gamma)}(v)$ which is called the \emph{valence} of $v$ in $\Gamma$.

A \emph{cut} of $\Gamma$ is a subset $\mathcal E\subset E(\Gamma)$ such that $\mathcal E=E(V,V^c)$, for some subset $V\subset V(\Gamma)$. A \emph{bond} of $\Gamma$ is a minimal cut of $\Gamma$. A \emph{hemisphere} of $\Gamma$ is a subset $V\subset V(\Gamma)$ such that $\Gamma(V)$ and $\Gamma(V^c)$ are connected subgraphs of $\Gamma$. Equivalently, $V$ is a hemisphere if and only if $E(V,V^c)$ is a bond. 
The \emph{genus} of $\Gamma$ is defined as $g_\Gamma:=b_1(\Gamma)+\sum_{v\in V(\Gamma)} w_\Gamma(v)$, where $b_1(\Gamma)$ is the first Betti number of $\Gamma$. For every subset $V\subset V(\Gamma)$, we let 
$g_V$ be the genus of the 
graph $\Gamma(V)$. In particular, we have $g_{V(\Gamma)}=g_\Gamma$.

A \emph{cycle} of the graph $\Gamma$ is a subset $\gamma\subset E(\Gamma)$ such that there is a connected subgraph of $\Gamma$ whose edges are the elements of $\gamma$ and whose vertices (called the \emph{vertices of the cycle}) have all valence 2. The graph $\Gamma$ is a \emph{tree} if it is connected and has no cycles. Equivalently, $\Gamma$ is a tree if and only if $b_1(\Gamma)=0$. A \emph{spanning tree} of $\Gamma$ is a connected subgraph of $\Gamma$ which is a tree and whose set of vertices is equal to $V(\Gamma)$. We usually see a spanning tree as a subset $T\subset E(\Gamma)$. We will call the complement of a spanning tree (in $E(\Gamma))$ a \emph{maximally nondisconnecting} subset of $\Gamma$.

 A \emph{cyclic equivalence} between two graphs  $\Gamma$ and $\Gamma'$ is a bijection $E(\Gamma)\ra E(\Gamma')$ that induces a bijection between the cycles of $\Gamma$ and the cycles of $\Gamma'$. 

\begin{Rem}
\label{rem:cyclic_equiv_trees}
    Given a bijection $f\colon E(\Gamma)\to E(\Gamma')$, the following conditions are equivalent.
    \begin{enumerate}
        \item The bijection $f$ is a cyclic equivalence.
        \item The bijection $f^{-1}$ is a cyclic equivalence. 
        \item The bijection $f$ induces a bijection between the set of spanning trees of $\Gamma$ and $\Gamma'$.
        \item The bijection $f$ induces a bijection between the set of bonds of $\Gamma$ and $\Gamma'$.
        \item The bijection $f$ induces a bijection between the set of cuts of $\Gamma$ and $\Gamma'$.
    \end{enumerate}
\end{Rem}

An edge $e$ of $\Gamma$ is called a \emph{bridge} if $\Gamma$ becomes disconnected after the removal of $e$. We let $\bridges(\Gamma)$ be the set of bridges of $\Gamma$.
We denote the set of nondisconnecting edges of $\Gamma$ by 
\begin{equation}\label{eq:ND}
\ND(\Gamma):=E(\Gamma)\setminus \bridges(\Gamma).
\end{equation}

A \emph{weakly cyclic equivalence} between two graphs $\Gamma$ and $\Gamma'$ is a bijection $f\col \ND(\Gamma)\to \ND(\Gamma')$ that induces a bijection between the cycles of $\Gamma$ and the cycles of $\Gamma'$ (recall that every cycle of $\Gamma$ is contained in $\ND(\Gamma)$). Equivalently, a weakly cyclic equivalence is a cyclic equivalence between $\Gamma/\bridges(\Gamma)$ and $\Gamma'/\bridges(\Gamma')$

\begin{Rem}
\label{rem:weak_cyclic_equiv_trees}
    Given a bijection $f\colon \ND(\Gamma)\to \ND(\Gamma')$, the following conditions are equivalent.
    \begin{enumerate}
        \item The bijection $f$ is a weakly cyclic equivalence.
        \item The bijection $f^{-1}$ is a weakly cyclic equivalence. 
        \item The bijection $f$ induces a bijection between the sets of maximally nondisconnectig subsets of  $\Gamma$ and $\Gamma'$.
    \end{enumerate}
\end{Rem}

A \emph{subdivision} of the graph $\Gamma$ is a graph obtained from $\Gamma$ inserting a number $n_e\ge0$ of vertices in the interior of every edge $e\in E(\Gamma)$. 
We say that $\Gamma$ is \emph{biconnected} if, for every subdivision $\widehat{\Gamma}$ of $\Gamma$, the removal of any vertex of $\widehat{\Gamma}$ does not disconnect the graph $\widehat{\Gamma}$. In particular, a graph with exactly one edge is biconnected if and only if it is a loop. Otherwise, a graph with at least two edges is  biconnected if and only if any two vertices of the graph are vertices of a cycle of the graph. Of course, if $\Gamma$ has a bridge, then $\Gamma$ is not  biconnected.  A \emph{biconnected component} of $\Gamma$ is a maximal biconnected subgraph of $\Gamma$. Every graph admits a unique decomposition into biconnected components. An \emph{articulation vertex} of $\Gamma$ is a vertex of $\Gamma$ such that the removal of $v$ disconnects the graph. 

Consider a subset $\mathcal E$ of $E(\Gamma)$. We denote by $\Gamma_\E$ the graph obtained from $\Gamma$ by removing the edges in $\E$, with $E(\Gamma_\E)=E(\Gamma)\setminus \E$ and $V(\Gamma_\E)=V(\Gamma)$. We also denote by $\Gamma^\E$ the subdivision of $\Gamma$ obtained from $\Gamma$ by inserting exactly one vertex, called \emph{exceptional} and denoted by $v_e$, in the interior of every edge $e\in \E$. We have $V(\Gamma^\E)=V(\Gamma)\cup \{v_e;e\in \E\}$. Finally, we let $\Gamma/\mathcal E$ the graph obtained by the contraction of the edges in $\mathcal E$. In this case, we say that $\Gamma$ \emph{specializes} to $\Gamma/\E$, and we write $\iota\col\Gamma\ra \Gamma/\E$. Notice that we have an induced surjective function $\iota\col V(\Gamma)\ra V(\Gamma/\E)$ and an inclusion  $E(\Gamma/\E)=E(\Gamma)\setminus \E\stackrel{\iota}{\ra} E(\Gamma)$. The case in which   $\E=\bridges(\Gamma)$ will play an important role later on. It is clear that $\Gamma/\bridges(\Gamma)$ is a graph without bridges.

\subsection{Divisors on graphs}

Let $\Gamma$ be a graph.
A \emph{divisor} $D$ on $\Gamma$ is a formal sum $D=\sum_{v\in V(\Gamma)}D(v)v$, where $D(v)\in \mathbb Z$. We denote by $\Div(\Gamma)$ the abelian group of divisors of $\Gamma$. For every subset $V\subset V(\Gamma)$, we set $D(V)=\sum_{v\in V}D(v)$. The \emph{degree} of a divisor $D$ is the integer $D(V(\Gamma))$. A \emph{pseudo-divisor} on $\Gamma$ is a pair $(\E,D)$, where $\E$ is a subset of $E(\Gamma)$ and $D$ is a divisor on $\Gamma^\E$ such that $D(v_e)=1$, for every $e\in \E$. The \emph{degree} of a pseudo-divisor $(\E,D)$ is the degree of the divisor $D$. Given a pseudo-divisor $(\E,D)$ on $\Gamma$, we set
\begin{equation}\label{eq:eps-delta}
\epsilon_{\Gamma}(\E,D)=\E
\;\; \text{ and }\;\;
\delta_{\Gamma}(\E,D)=D.
\end{equation}

If $\widehat{\Gamma}$ is a subdivision of a graph $\Gamma$, we can extend a divisor $D$ on $\Gamma$ to a divisor on $\widehat{\Gamma}$, setting $D(v)=0$ for every $v\in V(\widehat{\Gamma})\setminus V(\Gamma)$. Thus for every pseudo-divisor $(\E,D)$ on $\Gamma$, we could see $D$ as a divisor on the subdivision $\Gamma^{E(\Gamma)}$ of $\Gamma$. In particular, given pseudo-divisors $(\E_1,D_1)$ and $(\E_2,D_2)$, the sum  $D_1+D_2$ will make sense as a sum of divisors on $\Gamma^{E(\Gamma)}$.

Let $\iota\col \Gamma\ra \Gamma'$ be a specialization of graphs. Given a divisor $D$  on $\Gamma$, we have an induced divisor  $\iota_*(D)$ on $\Gamma'$ such that $\iota_*(D)(v')=\sum_{v\in \iota^{-1}(v')} D(v)$, for every $v'\in V(\Gamma')$. Notice that, if $\E$ is a subset of $E(\Gamma)$, then we have an induced specialization $\iota^\E\col \Gamma^\E\ra \Gamma'^{\E'}$, where $\E'=\E\cap E(\Gamma')$.
 Therefore, if $(\E,D)$ is a pseudo-divisor on $\Gamma$, we have an induced pseudo-divisor $\iota_*(\E,D):=(\E',\iota^\E_*(D))$ on $\Gamma'$. Given pseudo-divisors $(\E, D)$ on $\Gamma$ and $(\E', D')$ on $\Gamma'$, we say that $(\Gamma, \E, D)$ \emph{specializes} to $(\Gamma', \E', D')$ if the following conditions hold
\begin{enumerate}
    \item there is a specialization $\iota\col \Gamma\ra \Gamma'$ such that $\E'\subset \E\cap E(\Gamma')$;
    \item there is a specialization $\iota^\E\col \Gamma^\E\ra \Gamma'^{\E'}$
 such that $\iota^\E_*(D)=D'$;
    \item the following diagrams are commutative 
    \[
\SelectTips{cm}{11}
\begin{xy} <16pt,0pt>:
\xymatrix{ V(\Gamma)\ar[d]\ar[r]^{\iota} &V(\Gamma')\ar[d]&& E(\Gamma')\ar[r]^\iota &E(\Gamma)\\
             V(\Gamma^\E)\ar[r]^{\iota^\E}& V(\Gamma'^{\E'})&&E(\Gamma'^{\E'})\ar[u]\ar[r]^{\iota^\E} & E(\Gamma^\E)\ar[u]
}
\end{xy}
\]
\end{enumerate}

If $(\Gamma, \E, D)$ specializes to $(\Gamma', \E', D')$, we write $(\Gamma,\E,D)\ra (\Gamma',\E',D')$. If $\Gamma=\Gamma'$ and $\iota$ is the identity, we simply write $(\E,D)\ra (\E',D')$.

 An \emph{elementary specialization} is a specialization of type $(\E,D)\ra (\E',D')$, where $|\E'|=|\E|-1$. In this case, we have $\E'=\E\setminus\{e\}$ for some edge $e\in E(\Gamma)$, and we say that the elementary specialization is \emph{over $e$}. Notice that every specialization is a composition of elementary specializations.

\begin{Rem}\label{rem:elementary}
 Let $(\E,D)$ be a pseudo-divisor on $\Gamma$ and consider $e\in \E$. 
  If $e$ is not a loop with end-vertices $s,t$, then $(\E,D)\ra (\E\setminus\{e\},D-v_e+s)$ and $(\E,D)\ra(\E\setminus\{e\},D-v_e+t)$ are all the elementary specializations over $e$ having  $(\E,D)$ as source. If $e$ is a loop of $\Gamma$ with end-vertex $s$, then $(\E,D)\ra (\E\setminus\{e\},D-v_e+s)$ is the unique elementary specialization over $e$ having $(\E,D)$ as source.
 Notice that if $(\E,D_1)$ and $(\E,D_2)$ both specialize to the same pseudo-divisors $(\E\setminus\{e\},D'_1)$ and $(\E\setminus\{e\},D'_2)$, with $D'_1\ne D'_2$, then $D_1=D_2$.
\end{Rem}

A \emph{polarization} on the graph $\Gamma$ is a function $\mu\col V(\Gamma)\ra \mathbb R$ such that $\sum_{v\in V(\Gamma)} \mu(v)\in\mathbb Z$. For every subset $V\subset V(\Gamma)$, we set $\mu(V)=\sum_{v\in V} \mu(v)$. The \emph{degree} of a polarization $\mu$ is the integer $\mu(V(\Gamma))$. Given a specialization of graphs $\iota\col \Gamma\ra \Gamma'$ and a polarization $\mu$ on $\Gamma$ of degree $d$, we have an induced polarization $\iota_*(\mu)$ on $\Gamma'$ of degree $d$ given by $\iota_*(\mu)(v')=\sum_{v\in \iota^{-1}(v')} \mu(v)$. Given a subset $\E\subset E(\Gamma)$ and a degree $d$ polarization $\mu$ on $\Gamma$, we have an induced polarization $\mu^\E$ on $\Gamma^\E$ of degree $d$ given by $\mu^\E(v)=\mu(v)$ if $v\in V(\Gamma)$, and $\mu^\E(v)=0$ if $v\in V(\Gamma^\E)\setminus V(\Gamma)$. We also have an induced polarization $\mu_\E$ of degree $d-|\E|$ on $\Gamma_\E$ taking $v\in V(\Gamma_\E)$ to  $\mu_\E(v)=\mu(v)-\frac{1}{2}\val_\E(v)$.

Let $v_0$ be a vertex on the graph $\Gamma$ and $\mu$ a polarization on $\Gamma$ of degree $d$. Let $D$ be a divisor on $\Gamma$ of degree $d$. For every subset $V\subset V(\Gamma)$, we set
\begin{equation}
\beta_{\Gamma,D}(V):=D(V)-\mu(V)+\frac{\delta_V}{2}.
\end{equation}
We say that $D$ is \emph{$(v_0,\mu)$-quasistable} if $\beta_{\Gamma,D}(V)\ge0$ for every $V\subset V(\Gamma)$, with strict inequality if $v_0\not\in V$. 

\begin{Rem}\label{rem:hemi}
    To check that a divisor is $(v_0,\mu)$-quasistable, it suffices to check the condition of $(v_0,\mu)$-quasistability for all hemispheres of $\Gamma$.
\end{Rem}

\begin{Rem}
The definition of pseudo-divisor in this paper is different from the one given in \cite{APPLMS}, where a pseudo-divisor has degree $-1$ on every exceptional vertex. As a consequence, we have to change the definition of the induced polarization $\mu_\E$ and the notion of quasistability (which usually requires that the inequality is strict if $v_0\in V$). All the result of the paper could be proved in both setup. The reason why we preferred the new setup is because of Lemma  \ref{lem:tree}.
\end{Rem}

Given a pseudo-divisor $(\E,D)$ of degree $d$ on the graph $\Gamma$, we say that $(\E,D)$ is \emph{$(v_0,\mu)$-quasistable} if the divisor $D$ on $\Gamma^\E$ is $(v_0,\mu^\E)$-quasistable.

The \emph{canonical polarization of degree $g-1$} on the graph $\Gamma$ is the polarization $\mu_{\can}$ of degree $g-1$ such that
\begin{equation}\label{eq:canonical-pol}
\mu_{\can}(V) = g_V-1+\frac{\delta_V}{2},
\end{equation}
for every hemisphere $V\subset V(\Gamma)$. 
In this case, if $(\E,D)$ is a pseudo-divisor on $\Gamma$, then for every hemisphere $V\subset V(\Gamma^{\E})$ we have 
\begin{equation}\label{eq:integer}
\beta_{\Gamma^\E,D}(V)=D(V)-\mu^\E_{\can}(V)+\frac{\delta_V}{2}=D(V)-g_V+1,
\end{equation}
(recall that $D$ is a divisor on $\Gamma^\E$).
Given a $(v_0,\mu_{\can})$-quasistable pseudo-divisor $(\E,D)$ on $\Gamma$, we simply say that $(\E,D)$ is \emph{$v_0$-quasistable}. 

\begin{Rem}\label{rem:canonical-pol}
If $\E\subset E(\Gamma)$ is a nondisconneting subset of $E(\Gamma)$, then $(\mu_{\can})_\E$ is the canonical polarization of $\Gamma_\E$. \par 
\end{Rem}

\section{The poset of quasistable divisors}

Let $\Gamma$ be a graph. 
Given a vertex $v_0$ and a polarization $\mu$ on $\Gamma$, the set $\QS_{v_0,\mu}(\Gamma)$ of $(v_0,\mu)$-quasistable pseudo-divisors on $\Gamma$ forms a poset, where $(\E,D)\ge (\E',D')$ if there is a specialization $(\E,D)\ra (\E',D')$.  Given a subset $\E\subset E(\Gamma)$, we let 
\begin{equation}\label{eq:QDE}
\QS_{v_0,\mu}(\Gamma,\E)=\{D\in \Div(\Gamma^{\E}) ; (\E,D)\in \QS_{v_0,\mu}(\Gamma)\}.    
\end{equation}
The poset  $\QS_{v_0,\mu}(\Gamma)$ is ranked, with rank function taking a pseudo-divisor $(\E,D)$ to $|\E|$. We call $|\E|$ the \emph{rank} of the pseudo-divisor $(\E,D)$.

\begin{Rem}\label{rem:iso-bridge}
Let $\Gamma$ be a graph, $v_0$ a vertex of $\Gamma$, and $\mu$ a polarization on $\Gamma$.
If $e$ is a bridge of $\Gamma$ and $\iota\col \Gamma\ra \Gamma/\{e\}$ is the contraction of $e$, then $\QS_{v_0,\mu}(\Gamma)$ is naturally isomorphic to $\QS_{\iota(v_0),\iota_*(\mu)}(\Gamma/\{e\})$. Therefore, if we consider the specialization $\iota\col \Gamma\ra \Gamma/\bridges(\Gamma)$, then we have a natural isomorphism
\[
\QS_{v_0,\mu}(\Gamma)\cong\QS_{\iota(v_0),\iota_*(\mu)}(\Gamma/\bridges(\Gamma)).
\]
\end{Rem}

\begin{Rem}
\label{rem:AP_46}

Let $\Gamma$ be a graph, $\mu$ a polarization on $\Gamma$ and $\E\subset E(\Gamma)$ a  subset. 
The following properties are consequences of 
 \cite[Proposition 4.6]{APPLMS}.
 \begin{enumerate}
\item If $(\E,D)\in \QS_{v_0,\mu}(\Gamma)$ then $\E\subset \ND(\Gamma)$ (recall Equation \eqref{eq:ND}).
\item If $(\E,D)$ is a $(v_0,\mu)$-quasistable divisor on $\Gamma$ and $\iota\col \Gamma\ra \Gamma'$ is a specialization, then $\iota_*(\E,D)$ is a $(\iota(v_0),\iota_*(\mu))$-quasistable pseudo-divisor on $\Gamma'$.
 \end{enumerate}

 If $\E\subset E(\Gamma)$ is nondisconnecting, then:
 \begin{enumerate}
 \setcounter{enumi}{2}
 \item We have a natural inclusion $\QS_{v_0,\mu_\E}(\Gamma_\E)\subset \QS_{v_0,\mu}(\Gamma)$, taking a pseudo-divisor $(\E',D')$ to the pseudo-divisor $(\E\cup \E', D'+\sum_{e\in \E}v_e)$. Moreover, for every $S\subset E(\Gamma)\setminus \E$, we can identify $\QS_{v_0,\mu_\E}(\Gamma_\E,S)$  with $\QS_{v_0,\mu}(\Gamma,\E\cup S)$.
 \item If $\mu=\mu_{\can}$, then we have an inclusion $\QS_{v_0}(\Gamma_\E)\subset \QS_{v_0}(\Gamma)$ (combine Remark \ref{rem:canonical-pol} and  item (1)).
 \item If $\E$ is a maximally nondisconnecting subset of $\Gamma$, then $\QS_{v_0,\mu_\E}(\Gamma_\E)$ is a singleton. 
 %This means that we have that 
 \item The maximal elements of $\QS_{v_0,\mu}(\Gamma)$ are of the form $(\E,D)$ where $\E$ is a maximally nondisconnecting subset of $\Gamma$. 
 %Moreover, 
 \item For each maximally nondisconnecting subset $\E$ of $\Gamma$, there exists exactly one $D\in \QS_{v_0,\mu}(\Gamma, \E)$. In particular, the number of maximal elements of $\QS_{v_0,\mu}(\Gamma)$ is equal to the number of spanning trees of $\Gamma$.
 \end{enumerate}
\end{Rem}

%\begin{Rem}\label{rem:maximal_spanning_tree}
%By \cite[Proposition 4.6]{AP} we have that the maximal elements of $\QS_{v_0,\mu}(\Gamma)$ are of the form $(E(\Gamma)\setminus T,D)$ where $T$ is a spanning tree. Moreover, for each spanning tree $T$, there exists exactly one $D\in \QS_{v_0,\mu}(\Gamma, E(\Gamma)\setminus T)$. In particular the number of maximal elements of $\QS_{v_0,\mu}(\Gamma)$ is equal to the number of spanning trees of $\Gamma$.
%\end{Rem}

Let $\Gamma$ be a graph, $v_0$ a vertex of $\Gamma$, and $\mu$ a polarization on $\Gamma$. Two pseudo-divisors $(\E,D)$ and $(\E,D')$ in $\QS_{v_0,\mu}(\Gamma)$ are \emph{upper-connected} in $\QS_{v_0,\mu}(\Gamma)$ if there are edges $e_i\in E(\Gamma)\setminus \E$ for $i=1,\dots,n$, divisors $D_i$ on $\Gamma^{\E\cup \{e_i\}}$ for $i=1,\dots,n$, and divisors $D'_i$ on $\Gamma^{\E}$ 
 for $i=0,\dots,n$
 %$D_1,\ldots, D_n,D_0',\ldots, D_n'$ on $\Gamma$ 
 such that the following conditions hold
\begin{enumerate}
    \item we have that $D_i\in \QS_{v_0,\mu}(\Gamma, \E\cup\{e_i\})$ for $i=1,\dots,n$ and $D'_i\in \QS_{v_0,\mu}(\Gamma,\E)$
    %$(\E\cup\{e_i\},D_i)$ and $(\E,D'_i)$ are $(v_0,\mu)$-quasistable  pseudo-divisors on $\Gamma$, 
    for $i=0,\dots,n$;
    \item we have $(\E,D)=(\E,D_{0}')$ and $(\E,D')=(\E,D_n')$;
    \item we have 
 $(\E,D_{i-1}')\leq (\E\cup \{e_i\},D_i)$
 and $(\E,D'_i)\le (\E\cup \{e_i\},D_i)$
 for $i=1,\dots,n$.
\end{enumerate}

\begin{Prop}\label{prop:uppper-connected}
Let $\Gamma$ be a graph, $v_0$ a vertex of $\Gamma$, and $\mu$ a polarization on $\Gamma$. Consider divisors $D,D'\in \QS_{v_0,\mu}(\Gamma,\E)$, for some subset $\E\subset E(\Gamma)$. Then $(\E,D)$ and $(\E,D')$ are upper-connected in $\QS_{v_0,\mu}(\Gamma)$.
\end{Prop}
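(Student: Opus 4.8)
## Proof Proposal

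The plan is to prove that any two divisors $D, D' \in \QS_{v_0,\mu}(\Gamma,\E)$ are upper-connected by reducing to the fundamental case where $D$ and $D'$ differ by moving a single chip along a single edge, and then building a general path through a sequence of such elementary moves. The key structural fact I would exploit is Remark \ref{rem:AP_46}(3): since any quasistable pseudo-divisor has $\E \subset \ND(\Gamma)$, we may pass to the graph $\Gamma_\E$ and identify $\QS_{v_0,\mu}(\Gamma,\E)$ with the fiber over the empty set in $\QS_{v_0,\mu_\E}(\Gamma_\E)$. This reduces the problem to the case $\E = \emptyset$, where we compare two genuine $(v_0,\mu)$-quasistable \emph{divisors} on $\Gamma$ itself, and upper-connectedness asks us to connect them through rank-one pseudo-divisors $(\{e_i\}, D_i)$.

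First I would set up the combinatorial geometry. Both $D$ and $D'$ have the same degree and live in the (finite) set $\QS_{v_0,\mu}(\Gamma,\emptyset)$. The natural invariant controlling the two divisors is the difference $D - D' \in \Div(\Gamma)$, which has degree $0$, hence can be written as a $\mathbb{Z}$-combination of differences $w - v$ of vertices. I would define a distance, e.g. $\|D - D'\| = \tfrac{1}{2}\sum_{v} |D(v) - D'(v)|$, and induct on it. The base case $\|D-D'\| = 0$ is trivial. For the inductive step, I want to produce a single quasistable divisor $D''$ with $\|D'' - D'\| < \|D - D'\|$ such that $(\E, D)$ and $(\E, D'')$ are upper-connected via a \emph{single} intermediate rank-one pseudo-divisor. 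Concretely, if $D(v) > D'(v)$ for some $v$ and $D(w) < D'(w)$ for some $w$, I would choose an edge $e$ on a path (or directly an edge) between appropriate vertices, form $D_1 = D - v + v_e$ on $\Gamma^{\{e\}}$ where $s,t$ are the endpoints of $e$, and check via Remark \ref{rem:elementary} that $(\{e\}, D_1)$ admits the two elementary specializations back down to $(\emptyset, D-v+s)$ and $(\emptyset, D-v+t)$, one of which decreases the distance to $D'$.

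The main obstacle, and where the real work lies, is ensuring that the intermediate pseudo-divisor $(\{e\}, D_1)$ and its specialization target $D''$ are \emph{themselves quasistable}: the definition of upper-connectedness demands $D_i \in \QS_{v_0,\mu}(\Gamma, \E\cup\{e_i\})$ and $D'_i \in \QS_{v_0,\mu}(\Gamma,\E)$ at every stage. This is a nontrivial constraint because moving a chip from $v$ toward $w$ can violate the inequalities $\beta_{\Gamma,D}(V) \geq 0$ for some hemisphere $V$. I would handle this by a careful choice of which chip to move first: among all vertices $v$ with $D(v) > D'(v)$, and using Remark \ref{rem:hemi} so that only hemispheres need checking, I would select an edge $e = st$ and an orientation so that the resulting divisor stays in the quasistable region. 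The existence of such a valid move should follow from a minimality or extremality argument — for instance, choosing the move that keeps the relevant $\beta$-values maximal, or by arguing that if every single-chip move out of $D$ toward $D'$ left the quasistable polytope, then $D$ and $D'$ could not both be quasistable of the same degree. This polytope-theoretic interpretation (quasistable divisors as lattice points in a translate of a zonotope / the region cut out by the $\beta$-inequalities) is the natural framework: the set of quasistable divisors of fixed $\E$ is the set of integral points of a connected polytope, and upper-connectedness through rank-one pseudo-divisors is exactly the statement that adjacent integral points differing by a single edge-move stay admissible. I would therefore expect the cleanest route to be an explicit induction on $\|D - D'\|$ combined with a lemma guaranteeing an admissible chip-move exists whenever $D \neq D'$, with the quasistability verification being the delicate step requiring Remark \ref{rem:elementary} and Remark \ref{rem:hemi}.
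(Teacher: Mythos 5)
Your reduction to $\E=\emptyset$ is the same first step the paper takes, but from there your route diverges and, as written, it has a genuine gap at its central step. You propose to induct on $\|D-D'\|=\tfrac12\sum_v|D(v)-D'(v)|$ and at each stage to perform one chip-move along a single edge that (a) strictly decreases this distance and (b) keeps every intermediate divisor quasistable. Neither (a) nor (b) is established. For (a): if a vertex $v$ with $D(v)>D'(v)$ and a vertex $w$ with $D(w)<D'(w)$ are not adjacent, a move along the first edge of a path from $v$ to $w$ sends a chip to an intermediate vertex $u$ with $D(u)=D'(u)$, and then $\|D-D'\|$ is unchanged (the $v$-term drops by one, the $u$-term rises by one), so the induction does not terminate without a finer potential function or a routing argument. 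For (b): you explicitly flag the quasistability of the intermediate $(\{e\},D_1)$ and of the target $D''$ as the delicate step, and what you sketch (``should follow from a minimality or extremality argument,'' ``lattice points of a connected polytope'') is precisely the content of the proposition rather than something you can invoke; connectivity of the lattice points of a polytope under unit edge-moves fails for general polytopes and must be proven for this particular one.

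The paper avoids having to find admissible moves globally by inducting on $|E(\Gamma)|$ instead. It contracts one edge $e$, uses that the pushforward $\iota_*\col\QS_{v_0,\mu}(\Gamma)\to\QS_{\iota(v_0),\iota_*(\mu)}(\Gamma/\{e\})$ is surjective and \emph{closed} (\cite[Proposition 4.11]{APPLMS} together with Remark \ref{rem:closed}) to lift an upper-connecting chain from $\Gamma/\{e\}$ back to $\Gamma$, and is then left only with connecting pairs of divisors having the same image under $\iota_*$, i.e., differing only on the two end-vertices of $e$. For those, the explicit chain moving chips one at a time across $e$ is quasistable because each intermediate value $D_i(V)$, $D'_i(V)$ is bounded below by $\min(D(V),D'(V))$ for every $V$. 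If you want to salvage your approach, the missing lemma is essentially this local statement together with a lifting device playing the role of the closedness of $\iota_*$; at that point you would have reconstructed the paper's proof.
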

\begin{proof}
As recalled in Remark \ref{rem:AP_46}, 
we have an inclusion $\QS_{v_0,\mu_\E}(\Gamma_{\E})\subset \QS_{v_0,\mu}(\Gamma)$. Hence we can assume $\E=\emptyset$. We will proceed by induction on the number of edges of $\Gamma$. If $\Gamma$ has only one edge the result is clear. Otherwise, fix an edge $e\in E(\Gamma)$ and consider the contraction $\iota\col \Gamma\to \Gamma/\{e\}$ of $e$. Recall that the map $\iota_*\colon \QS_{v_0,\mu}(\Gamma)\to \QS_{\iota(v_0),\iota_*(\mu)}(\Gamma/\{e\})$ taking $(\E,D)$ to $\iota_*(\E,D)$ is surjective and closed (see \cite[Proposition 4.11]{APPLMS}). \par 

First of all, we assume that  $\iota_*(\emptyset,D)=\iota_*(\emptyset,D')$. This means that $D(v)=D'(v)$ for every vertex $v\in V(\Gamma)$ not incident to $e$.  If $e$ is a loop, then $D=D'$, and we have nothing to prove. Otherwise, let $s$ and $t$ be the end-vertices of $e$ and assume that $D(t)\ge D'(t)$. Set $n:=D(t)-D'(t)=D'(s)-D(s)$ and define the divisors $D_i$ on $\Gamma^{\{e\}}$ for $i=1,\dots,n$ and $D'_i$ on $\Gamma$ for $i=0,\dots,n$ taking a vertex $v$ to
\begin{align*}
& D_i(v)=
\begin{cases}
D(v) & \text{ if } v\not\in\{s,t\}\\
1    & \text{ if } v=v_e \\
D(v)-i = D'(v)+ n -i& \text{ if } v=t\\
D(v)+i-1 & \text{ if } v=s
\end{cases}
&
D'_i(v)=
\begin{cases}
D(v) & \text{ if } v\not\in\{s,t\}\\
D(v)-i =D'(v)+n-i& \text{ if } v=t\\
D(v)+i & \text{ if } v=s.
\end{cases}
\end{align*}
 Let $e_1:=e_2:=\dots:=e_n:=e$. Note that $(\emptyset,D_i')$ and $(\{e\},D_i)$ are $(v_0,\mu)$-quasistable because both $D_i(V)$ and $D'_i(V)$ are greater or equal than either $D(V)$ or $D'(V)$, for every $V\subset V(\Gamma)\subset V(\Gamma^{\{e\}})$.  We see that $(\emptyset,D)$ and $(\emptyset, D')$ are upper-connected in $\QS_{v_0,\mu}(\Gamma)$ by means of the edges $e_1,\dots,e_n$ and the divisors $D_1,\dots, D_n,D'_0,\dots,D'_n$. 

 Now we consider the general case. By the induction hypothesis, $\iota_*(\emptyset,D)$ and $\iota_*(\emptyset, D')$ are upper-connected in $\QS_{\iota(v_0),\iota_*(\mu)}(\Gamma/\{e\})$ by means of edges $e_1,\ldots, e_n$ of $\Gamma/\{e\}$, and divisors 
 $D_{e,i}\in \QS_{\iota(v_0),\iota_*(\mu)}(\Gamma/\{e\},\{e_i\})$ for $i=1,\dots,n$ and 
 $D'_{e,i}\in \QS_{\iota(v_0),\iota_*(\mu)}(\Gamma/\{e\})$ for $i=0,\dots,n$. Since $\iota_*$ is surjective, there are divisors $D_i\in \QS_{v_0,\mu}(\Gamma,\{e_i\})$ for $i=1,\dots,n$,  such that $\iota_*(\{e_i\},D_i)=(\{e_i\},D_{e,i})$. By Remark \ref{rem:closed} and the fact that $\iota_*$ is closed, we have that  there are $(v_0,\mu)$-quasistable divisors $D_i'$ and $D_i''$ on $\Gamma$ such that 
 \begin{align*}
 (\emptyset, D_i')\leq (\{e_i\},D_i), \;\;\;\;& (\emptyset, D_i'')\leq (\{e_i\},D_i),\\
 \iota_*(\emptyset, D_i')=(\emptyset, D_{e,i}'),\;\; \;\;& \iota_*(\emptyset, D_i'') = (\emptyset, D_{e,i-1}').
\end{align*}
 This means that $\iota_*(\emptyset, D)=\iota_*(\emptyset, D_1'')$, $\iota_*(\emptyset, D_i')=\iota_*(\emptyset, D_{i+1}'')$ and $\iota_*(\emptyset, D')=\iota_*(\emptyset, D_{n}')$. By the previous case, we have that the pairs $((\emptyset, D),(\emptyset, D_1''))$,  $((\emptyset, D_i''), (\emptyset, D_{i+1}''))$ and $((\emptyset, D'),(\emptyset,D_n'))$ are pairs of upper-connected pseudo-divisors in $\QS_{v_0,\mu}(\Gamma)$. Since $(\emptyset, D_i')\leq (\{e_i\},D_i)$ and $(\emptyset, D_i'')\leq (\{e_i\},D_i)$, it follows that $(\emptyset, D_i'')$ and $(\emptyset, D_i')$ are upper-connected in $\QS_{v_0,\mu}(\Gamma)$. This proves that $(\emptyset,D)$ and $(\emptyset, D')$ are upper-connected in $\QS_{v_0,\mu}(\Gamma)$, concluding the proof.
\end{proof}

Recall that $\mu_{\can}$ denotes the canonical polarization of degree $g-1$ (see Equation \eqref{eq:canonical-pol}). 
We will simply write $\QS_{v_0}(\Gamma)$ and $\QS_{v_0}(\Gamma,\E)$ instead of $\QS_{v_0,\mu_{\can}}(\Gamma)$ and $\QS_{v_0,\mu_{\can}}(\Gamma,\E)$.

\begin{Prop}\label{prop:one-QD}
 Let $\Gamma$ be a graph, and  $v_0,v_1$ be vertices of $\Gamma$. Then we have a canonical isomorphism of posets $\QS_{v_0}(\Gamma)\cong\QS_{v_1}(\Gamma)$.   
\end{Prop}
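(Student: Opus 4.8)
The plan is to produce the isomorphism by an explicit \emph{translation}, reflecting the geometric fact that the Esteves Jacobians for different base points differ by a twist. Concretely, I would define
\[
\Psi\col \QS_{v_0}(\Gamma)\to \QS_{v_1}(\Gamma),\qquad \Psi(\E,D)=(\E,\,D+v_0-v_1),
\]
where $v_0-v_1$ is regarded as the divisor on $\Gamma^\E$ supported on the non-exceptional vertices $v_0,v_1\in V(\Gamma)\subset V(\Gamma^\E)$. Since $v_0$ and $v_1$ are not exceptional, adding $v_0-v_1$ does not change the value at any $v_e$, so $(D+v_0-v_1)(v_e)=1$ and $\Psi(\E,D)$ is again a pseudo-divisor of the same degree. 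The candidate inverse is $\Psi^{-1}(\E,D)=(\E,\,D-v_0+v_1)$, which visibly satisfies $\Psi^{-1}\Psi=\Id$ and $\Psi\Psi^{-1}=\Id$ once both maps are shown to be well defined; as the whole construction depends only on the ordered pair $(v_0,v_1)$, this accounts for the canonicity.

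The first and only substantial step is to check that $\Psi$ really lands in $\QS_{v_1}(\Gamma)$, i.e.\ that $D+v_0-v_1$ is $(v_1,\mu_{\can}^\E)$-quasistable whenever $D$ is $(v_0,\mu_{\can}^\E)$-quasistable. By Remark \ref{rem:hemi} it suffices to test the defining inequalities on hemispheres $V$ of $\Gamma^\E$, and on such $V$ formula \eqref{eq:integer} shows that $\beta_{\Gamma^\E,D}(V)$ is an integer, so that the strict inequality ``$>0$'' coincides with ``$\ge 1$''; this integrality is exactly what makes the translation argument work. The key computation is the identity
\[
\beta_{\Gamma^\E,\,D+v_0-v_1}(V)=\beta_{\Gamma^\E,D}(V)+[v_0\in V]-[v_1\in V],
\]
where $[\,\cdot\,]$ equals $1$ or $0$ according to whether the condition holds. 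I would then run the short case analysis according to whether $v_0,v_1$ lie in $V$: the correction term $[v_0\in V]-[v_1\in V]$ transports the location of the strict inequality from $v_0$ to $v_1$ exactly, turning the $(v_0,\mu_{\can})$-quasistability inequalities for $D$ into the $(v_1,\mu_{\can})$-quasistability inequalities for $D+v_0-v_1$. By symmetry the same computation shows $\Psi^{-1}$ maps $\QS_{v_1}(\Gamma)$ into $\QS_{v_0}(\Gamma)$, so $\Psi$ is a bijection.

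It then remains to verify that $\Psi$ and $\Psi^{-1}$ preserve the specialization order. Since every specialization factors as a composition of elementary specializations, it is enough to treat an elementary one, which by Remark \ref{rem:elementary} has the form $(\E,D)\to(\E\setminus\{e\},\,D-v_e+s)$ for an end-vertex $s$ of $e$. Because $v_0,v_1$ are distinct from every exceptional vertex $v_e$, the operation $D\mapsto D-v_e+s$ commutes with adding $v_0-v_1$; hence $\Psi$ carries this elementary specialization to the elementary specialization $(\E,D+v_0-v_1)\to(\E\setminus\{e\},\,(D+v_0-v_1)-v_e+s)$ over the same edge, with the same chosen end-vertex. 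Thus $\Psi$ preserves covers and the rank $|\E|$, and likewise for $\Psi^{-1}$, so $\Psi$ is an isomorphism of ranked posets. I expect the only place demanding genuine care is the reduction to hemispheres together with the integrality of $\beta$ on them, which feeds the case analysis; once the translation $v_0-v_1$ has been identified as the right map, everything else is bookkeeping.
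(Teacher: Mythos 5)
Your proposal is correct and follows essentially the same route as the paper's own proof: the same translation $(\E,D)\mapsto(\E,D+v_0-v_1)$, the same $\beta$-computation split into cases according to whether $v_0,v_1$ lie in $V$, with integrality of $\beta$ from Equation \eqref{eq:integer} converting the strict inequality at $v_0$ into the one at $v_1$, and the evident inverse. The only difference is that you spell out the order-preservation via elementary specializations, which the paper dismisses as clear; this is harmless extra detail.
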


\begin{proof}
We construct a map $\QS_{v_0}(\Gamma)\to \QS_{v_1}(\Gamma)$ that takes a pseudo-divisor $(\E,D)$ in $\QS_{v_0}(\Gamma)$ to $(\E,D+v_0-v_1)$. This map is well-defined, indeed, fix $(\E,D)$ a $v_0$-quasistable pseudo-divisor, and set $D'=D+v_0-v_1$. For every $V\subset V(\Gamma^\E)$ we have that $\beta_{\Gamma,D}(V)$ is an integer number by Equation \eqref{eq:integer}. Moreover,
\[
\beta_{\Gamma,D'}(V)=\begin{cases}
\beta_{\Gamma,D}(V) + 1 & \text{ if }v_0\in V, v_1\notin V,\\
\beta_{\Gamma,D}(V) - 1 & \text{ if }v_0\notin V, v_1\in V,\\
\beta_{\Gamma,D}(V) & \text{ otherwise }.
\end{cases}
\]
It follows that $(\E, D-v_0+v_1)$ is $v_1$-quasistable.\par 
 The fact that the map is a morphism of posets is clear, and it has a natural inverse that takes $(\E,D')$ to $(\E, D'-v_0+v_1)$, hence it is an isomorphism.
\end{proof}

Notice that Proposition \ref{prop:one-QD} allows us to use the notation $\QS(\Gamma)$ to denote one of the posets $\QS_{v_0}(\Gamma)$, for $v_0\in V(\Gamma)$. Similarly, we will use  the notation   $\QS(\Gamma,\E)$ to denote one of the posets $\QS_{v_0}(\Gamma,\E)$. We will keep using the notations $\QS_{v_0}(\Gamma)$ and $\QS_{v_0}(\Gamma,\E)$ when we will need to consider one specific poset in the computations.

\section{Special posets}

In this section we will study some distinguished subposets of the poset of quasistable divisors $\QS(\Gamma)$.

\begin{Def}\label{def:fix}
We let $\PosFix$ (respectively, $\PosFixx$) be the ranked poset whose Hasse diagrams is drawn in Figure \ref{fig:hasse_posfix0} (respectively, in Figure \ref{fig:hasse_posfix1}). We write $\PosFix=\{\alpha,\beta,\gamma,\delta\}$ and $\PosFixx=\{\alpha_1,\beta_1,\beta_2,\beta_3,\beta_4, \gamma_1, \gamma_2, \gamma_3\}$.
\begin{figure}[h!]
\centering
\begin{minipage}[b][][b]{0.49\linewidth}
\begin{center}
\begin{tikzpicture}

\draw[fill] (0,2) circle (2pt) node[left] {$\alpha$};
\draw[fill] (1,2) circle (2pt) node[left] {$\beta$};
\draw[fill] (0,0) circle (2pt);
\draw[fill] (1,0) circle (2pt);

\draw[decoration={markings, mark=at position 0.6 with {\arrow{<}}}, postaction={decorate}] (0,2) -- (0,0) node[left] {$\gamma$};
\draw[decoration={markings, mark=at position 0.6 with {\arrow{<}}}, postaction={decorate}] (0,2) -- (1,0) node[left] {$\delta$};
\draw[decoration={markings, mark=at position 0.6 with {\arrow{<}}}, postaction={decorate}] (1,2) -- (0,0);
\draw[decoration={markings, mark=at position 0.6 with {\arrow{<}}}, postaction={decorate}] (1,2) -- (1,0);
\end{tikzpicture}
\caption{The Hasse diagram of  $\PosFix$.}
\label{fig:hasse_posfix0}
\end{center}
\end{minipage}
\begin{minipage}[b][][b]{0.49\linewidth}
\begin{center}
\begin{tikzpicture}
\draw[fill] (1.5,4) circle (2pt) node[above] {$\alpha_1$};
\draw[fill] (0,2) circle (2pt) node[left] {$\beta_1$};
\draw[fill] (1,2) circle (2pt) node[left] {$\beta_2$};
\draw[fill] (2,2) circle (2pt) node[left] {$\beta_3$};
\draw[fill] (3,2) circle (2pt) node[left] {$\beta_4$};
\draw[fill] (0.5,0) circle (2pt) node[left] {$\gamma_1$};
\draw[fill] (1.5,0) circle (2pt) node[left] {$\gamma_2$};
\draw[fill] (2.5,0) circle (2pt) node[left] {$\gamma_3$};
\draw[decoration={markings, mark=at position 0.6 with {\arrow{<}}}, postaction={decorate}] (1.5,4) -- (0,2);
\draw[decoration={markings, mark=at position 0.6 with {\arrow{<}}}, postaction={decorate}] (1.5,4) -- (1,2);
\draw[decoration={markings, mark=at position 0.6 with {\arrow{<}}}, postaction={decorate}] (1.5,4) -- (2,2);
\draw[decoration={markings, mark=at position 0.6 with {\arrow{<}}}, postaction={decorate}] (1.5,4) -- (3,2);
\draw[decoration={markings, mark=at position 0.6 with {\arrow{<}}}, postaction={decorate}] (0,2) -- (0.5,0);
\draw[decoration={markings, mark=at position 0.6 with {\arrow{<}}}, postaction={decorate}] (0,2) -- (1.5,0);
\draw[decoration={markings, mark=at position 0.6 with {\arrow{<}}}, postaction={decorate}] (1,2) -- (0.5,0);
\draw[decoration={markings, mark=at position 0.6 with {\arrow{<}}}, postaction={decorate}] (1,2) -- (1.5,0);
\draw[decoration={markings, mark=at position 0.6 with {\arrow{<}}}, postaction={decorate}] (2,2) -- (2.5,0);
\draw[decoration={markings, mark=at position 0.6 with {\arrow{<}}}, postaction={decorate}] (2,2) -- (1.5,0);
\draw[decoration={markings, mark=at position 0.6 with {\arrow{<}}}, postaction={decorate}] (3,2) -- (2.5,0);
\draw[decoration={markings, mark=at position 0.6 with {\arrow{<}}}, postaction={decorate}] (3,2) -- (1.5,0);
\end{tikzpicture}
\caption{The Hasse diagram of  $\PosFixx$.}
\label{fig:hasse_posfix1}
\end{center}
\end{minipage}
\end{figure}
\end{Def}

\begin{Prop}\label{prop:P0}
Let $\Gamma$ be a graph and $v_0$ a vertex of $\Gamma$.
%and $v_0$ a vertex of $\Gamma$.
Suppose that $g\col \PosFix \to \QS_{v_0}(\Gamma)$ is an injective morphism of posets that preserves cover relations. Then there are parallel edges $e_1,e_2$ of $\Gamma$ and a subset $\E\subset E(\Gamma)\setminus\{e_1,e_2\}$ such that, denoting by $s$ and $t$ the end-vertices of $e_1$ and $e_2$, one of the following conditions hold
\begin{enumerate}
    \item[(1)] there is a divisor $D$ on $\Gamma^\E$ such that 
    \[
g(\PosFix)=\left\{ \begin{array}{l}
\{(\E\cup\{e_1\},D+v_{e_1}),(\E\cup\{e_2\},D+v_{e_2}),\\ (\E,D+s), (\E,D+t)
\end{array}\right\}.
\]
    \item[(2)] there is a divisor $D$ on $\Gamma^\E$ such that 
    \[
g(\PosFix)=\left\{
\begin{array}{l}
(\E\cup\{e_1,e_2\},D-t+v_{e_1}+v_{e_2}),\; (\E\cup\{e_1,e_2\}, D-s+v_{e_1}+v_{e_2}),\\
(\E\cup \{e_1\}, D+v_{e_1}), \;
(\E\cup \{e_2\}, D+v_{e_2})
\end{array}\right\}.
\]
%    where $(\mathcal E\cup\{e_1\},D)$ is an elementary specialization both of $(\E\cup\{e_1,e_2\},D_1)$ and of $(\E\cup\{e_1,e_2\},D_2)$ with respect to $e_2$, and $(\mathcal E\cup\{e_2\},D)$ is an elementary specialization both of $(\E\cup\{e_1,e_2\},D_1)$ and of $(\E\cup\{e_1,e_2\},D_2)$ with respect to $e_1$.
\end{enumerate}
The two possibilities for the Hasse diagram of $g(\mathbf{P})$ are drawn in Figure \ref{fig:gP0} (where we only draw the edges $e_1$ and $e_2$, instead of the whole graph $\Gamma$).
\end{Prop}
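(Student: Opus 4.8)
The plan is to read off the combinatorial shape of $\PosFix$ and push it through $g$. In $\PosFix$ both maximal elements $\alpha,\beta$ cover both minimal elements $\gamma,\delta$, so $\PosFix$ is a ``$K_{2,2}$'' poset of height $2$. Since $g$ is injective and preserves cover relations, the four images are four distinct pseudo-divisors; writing their edge sets as $\mathcal A,\mathcal B$ (the images of $\alpha,\beta$) and $\mathcal C,\mathcal D$ (the images of $\gamma,\delta$), the rank function $|{\cdot}|$ forces $|\mathcal A|=|\mathcal B|=r+1$ and $|\mathcal C|=|\mathcal D|=r$ for some $r$. Because a cover in $\QS(\Gamma)$ is exactly an elementary specialization, the four preserved covers translate into $\mathcal A=\mathcal C\cup\{a_\gamma\}=\mathcal D\cup\{a_\delta\}$ and $\mathcal B=\mathcal C\cup\{b_\gamma\}=\mathcal D\cup\{b_\delta\}$, where each singleton is the corresponding set difference. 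First I would record these four edge-set identities and then run a dichotomy on whether $\mathcal C=\mathcal D$.

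If $\mathcal C=\mathcal D=:\E$, then $a_\gamma=a_\delta=:e_1$ is the single edge of $\mathcal A\setminus\E$ and $b_\gamma=b_\delta=:e_2$ the single edge of $\mathcal B\setminus\E$, so both covers below $g(\alpha)$ are elementary specializations over $e_1$. By Remark \ref{rem:elementary} the elementary specializations over $e_1$ from $(\E\cup\{e_1\},D_\alpha)$ are obtained by moving the exceptional chip $v_{e_1}$ to an end-vertex; since $g(\gamma)\ne g(\delta)$ there must be two of them, so $e_1$ is not a loop, has distinct end-vertices $s,t$, and setting $D:=D_\alpha-v_{e_1}$ gives $g(\gamma)=(\E,D+s)$, $g(\delta)=(\E,D+t)$ up to relabeling. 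Repeating with $g(\beta)$ over $e_2$ and matching its two children to $\{(\E,D+s),(\E,D+t)\}$ forces $e_2$ to have the same end-vertices $s,t$ (hence $e_1,e_2$ parallel) and $g(\beta)=(\E\cup\{e_2\},D+v_{e_2})$; injectivity gives $e_1\ne e_2$. This is precisely configuration (1).

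If instead $\mathcal C\ne\mathcal D$, I would first observe that two distinct size-$r$ sets both contained in the size-$(r+1)$ sets $\mathcal A$ and $\mathcal B$ must satisfy $\mathcal A=\mathcal B=\mathcal C\cup\mathcal D$ with $|\mathcal C\cap\mathcal D|=r-1$; writing $\E:=\mathcal C\cap\mathcal D$, $\mathcal C=\E\cup\{e_1\}$, $\mathcal D=\E\cup\{e_2\}$, the common top edge set is $\E\cup\{e_1,e_2\}$, and $g(\alpha),g(\beta)$ are two distinct divisors on the same $\Gamma^{\E\cup\{e_1,e_2\}}$. Here the covers $g(\alpha)\to g(\gamma)$, $g(\beta)\to g(\gamma)$ are elementary specializations over $e_2$, and those to $g(\delta)$ are over $e_1$. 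Expressing $D_\gamma$ (and $D_\delta$) both as a child of $D_\alpha$ and of $D_\beta$ via Remark \ref{rem:elementary} and equating yields $D_\alpha-D_\beta=x'-x=y'-y$, where $x,x'$ are end-vertices of $e_2$ and $y,y'$ of $e_1$. Since $D_\alpha\ne D_\beta$ this common difference is a nonzero degree-$0$ divisor supported on two vertices, which forces both edges to be non-loops and, the two differences being equal, to share the same end-vertex pair $\{s,t\}$; so $e_1,e_2$ are parallel. A direct substitution, taking $D:=D_\alpha-v_{e_1}-v_{e_2}+s$, then rewrites the four elements exactly in the shape of configuration (2), and one checks the resulting Hasse diagram is the one in Figure \ref{fig:gP0}.

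The main obstacle I anticipate is the bookkeeping inside Remark \ref{rem:elementary}: one must keep exact track of \emph{which} edge is deleted in each of the four covers, and then argue that the demand that the two distinct lower pseudo-divisors be simultaneously reachable from two distinct upper pseudo-divisors forces the two relevant edges to be genuinely parallel (equal end-vertex pairs) rather than merely adjacent or loops. Ruling out loops is where injectivity is essential, since a loop edge admits only one elementary specialization and could not produce two distinct children. Once parallelism is established, the identification of $D$, of the roles of $s$ and $t$, and of the signs in $D_\alpha-D_\beta=t-s$ is a routine substitution, and both branches of the dichotomy land in exactly one of the two stated configurations.
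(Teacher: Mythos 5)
Your proposal is correct and follows essentially the same route as the paper: a case split on the edge sets of the four images, followed by the bookkeeping of Remark \ref{rem:elementary} to force the two relevant edges to be non-loops with a common pair of end-vertices. The only differences are organizational — you fold the paper's degenerate third case ($\E_1=\E_2$ and $\E_3=\E_4$) into your first branch via injectivity, and in the second branch you derive parallelism from the single identity $D_\alpha-D_\beta=x'-x=y'-y$ rather than the paper's vertex-by-vertex contradiction and its sub-cases (i)--(iv) — but the substance is the same.
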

\begin{figure}[ht]
    \begin{center}
    \begin{minipage}{0.46\linewidth}
    \flushleft
    \begin{tikzpicture}[scale = 1]
    \begin{scope}
    \draw[fill] (0,0) circle (2pt) node[left] {$a$};
    \draw[fill] (2,0) circle (2pt) node[right] {$b$};
    \draw[fill] (1,0.288) circle (2pt) node[above] {$1$};
    
    \draw (0,0) to [out=30, in=150] (2,0);
    \draw (0,0) to [out=-30, in=-150] (2,0);
    \end{scope}
    \begin{scope}[shift={(4,0)}]
    \draw[fill] (0,0) circle (2pt) node[left] {$a$};
    \draw[fill] (2,0) circle (2pt) node[right] {$b$};
    \draw[fill] (1,-0.288) circle (2pt) node[above] {$1$};
    \draw (0,0) to [out=30, in=150] (2,0);
    \draw (0,0) to [out=-30, in=-150] (2,0);
    \end{scope}
    
    \begin{scope}[shift = {(0,-3)}]
    \draw[fill] (0,0) circle (2pt) node[left] {$a+1$};
    \draw[fill] (2,0) circle (2pt) node[right] {$b$};
    
    \draw (0,0) to [out=30, in=150] (2,0);
    \draw (0,0) to [out=-30, in=-150] (2,0);
    \end{scope}
    
    \begin{scope}[shift = {(4,-3)}]
    \draw[fill] (0,0) circle (2pt) node[left] {$a$};
    \draw[fill] (2,0) circle (2pt) node[right] {$b+1$};
    
    \draw (0,0) to [out=30, in=150] (2,0);
    \draw (0,0) to [out=-30, in=-150] (2,0);
    \end{scope}
    
    \draw[decoration={markings, mark=at position 0.6 with {\arrow{<}}}, postaction={decorate}] (1,-0.6) -- (1,-2.4);
    \draw[decoration={markings, mark=at position 0.6 with {\arrow{<}}}, postaction={decorate}] (5,-0.6) -- (5,-2.4);
    \draw[decoration={markings, mark=at position 0.6 with {\arrow{<}}}, postaction={decorate}] (1.2,-0.6) -- (4.8,-2.4);
    \draw[decoration={markings, mark=at position 0.6 with {\arrow{<}}}, postaction={decorate}] (4.8,-0.6) -- (1.2,-2.4);
    \end{tikzpicture}
    \end{minipage}
    \quad\quad
    \begin{minipage}{0.46\linewidth}
    \flushright
    \begin{tikzpicture}[scale = 1]
    \begin{scope}
    \draw[fill] (0,0) circle (2pt) node[left] {$a-1$};
    \draw[fill] (2,0) circle (2pt) node[right] {$b$};
    \draw[fill] (1,0.288) circle (2pt) node[above] {$1$};
    \draw[fill] (1,-0.288) circle (2pt) node[above] {$1$};
    
    \draw (0,0) to [out=30, in=150] (2,0);
    \draw (0,0) to [out=-30, in=-150] (2,0);
    \end{scope}
    \begin{scope}[shift={(4,0)}]
    \draw[fill] (0,0) circle (2pt) node[left] {$a$};
    \draw[fill] (2,0) circle (2pt) node[right] {$b-1$};
    \draw[fill] (1,0.288) circle (2pt) node[above] {$1$};
    \draw[fill] (1,-0.288) circle (2pt) node[above] {$1$};
    \draw (0,0) to [out=30, in=150] (2,0);
    \draw (0,0) to [out=-30, in=-150] (2,0);
    \end{scope}
    
    \begin{scope}[shift = {(0,-3)}]
    \draw[fill] (0,0) circle (2pt) node[left] {$a$};
    \draw[fill] (2,0) circle (2pt) node[right] {$b$};
    \draw[fill] (1,-0.288) circle (2pt) node[above] {$1$};
    \draw (0,0) to [out=30, in=150] (2,0);
    \draw (0,0) to [out=-30, in=-150] (2,0);
    \end{scope}
    
    \begin{scope}[shift = {(4,-3)}]
    \draw[fill] (0,0) circle (2pt) node[left] {$a$};
    \draw[fill] (2,0) circle (2pt) node[right] {$b$};
    \draw[fill] (1,0.288) circle (2pt) node[below] {$1$};
    \draw (0,0) to [out=30, in=150] (2,0);
    \draw (0,0) to [out=-30, in=-150] (2,0);
    \end{scope}
    
    \draw[decoration={markings, mark=at position 0.6 with {\arrow{<}}}, postaction={decorate}] (1,-0.6) -- (1,-2.4);
    \draw[decoration={markings, mark=at position 0.6 with {\arrow{<}}}, postaction={decorate}] (5,-0.6) -- (5,-2.4);
    \draw[decoration={markings, mark=at position 0.6 with {\arrow{<}}}, postaction={decorate}] (1.2,-0.6) -- (4.8,-2.4);
    \draw[decoration={markings, mark=at position 0.6 with {\arrow{<}}}, postaction={decorate}] (4.8,-0.6) -- (1.2,-2.4);
    \end{tikzpicture}
    \end{minipage}
    \end{center}
    \caption{The two possibility for the Hasse diagram of the poset $g(\PosFix)$.}
    \label{fig:gP0}   
\end{figure}
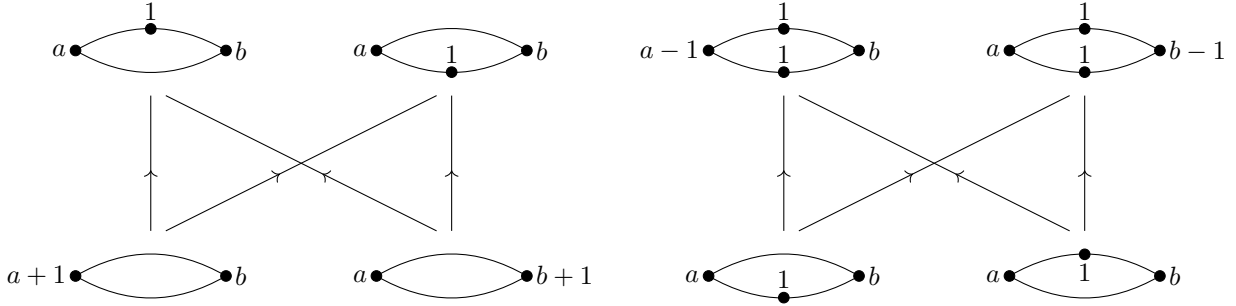

%\begin{Rem}
%\label{rem:D_equal}\footnote{Rever o remark}
%Note that in Case 2 above, we have that 
%\[
%(D-v_{e_1}+u+v)(w) = (D-v_{e_2}+u+v)(w)
%\]
%for every $w\in V(\Gamma)$.
%\end{Rem}

\begin{proof}
Recall that we write $\mathbf P=\{\alpha,\beta,\gamma,\delta\}$ (see Figure \ref{fig:hasse_posfix0}).
We set
\[
(\E_1,D_1):=g(\alpha), \;\;(\E_2,D_2):=g(\beta), \; \;(\E_3,D_3):=g(\gamma), \; \; (\E_4,D_4):=g(\delta).
\]
By definition of specialization, we have $\E_3\cup \E_4\subset \E_1\cap \E_2$, with $|\E_1|=|\E_2|=|\E_3|+1=|\E_4|+1$. Hence we have three cases:
\begin{enumerate}
    \item either $\E_3=\E_4$ and $\E_1\neq \E_2$,
    \item or $\E_3\neq \E_4$ and $\E_1=\E_2$, 
    \item or $\E_3=\E_4$ and $\E_1=\E_2$. 
\end{enumerate}

We begin with Case (1). In this case, we define $\E:=\E_3=\E_4$, which means that $\E_1=\E\cup \{e_1\}$ and $\E_2=\E\cup \{e_2\}$ for some distinct edges $e_1,e_2\in E(\Gamma)$. We have that $(\E,D_3)$ and $(\E,D_4)$ must be different (since $g$ is injective) and hence 
they are the two pseudo-divisors on $\Gamma$ of type $(\E,D')$ to which both $(\E\cup\{e_1\},D_1)$ and $(\E\cup \{e_2\}, D_2)$ specialize described in Remark \ref{rem:elementary}.
%the two elementary specializations of $(\E\cup\{e_1\},D_1)$ and $(\E\cup \{e_2\}, D_2)$ 
In particular neither $e_1$ nor $e_2$ is a loop, otherwise there will be only one of these specializations. 

Let us prove that $e_1$ and $e_2$ are parallel edges. Assume, by contradiction, that there exists a vertex $v$ incident to $e_1$ and not to $e_2$. Then, by Remark \ref{rem:elementary}, it follows that $D_3(v)=D_2(v)$ and $D_4(v)=D_2(v)$, and also, without loss of generality, that $D_3(v)=D_1(v)$ and $D_4(v)=D_1(v)+1$, giving rise to a contradiction. This proves that $e_1$ and $e_2$ are incident to same pair of vertices, meaning that they are parallel. 

Denote by $s,t$ the end-vertices of $e_1$ and $e_2$. Again by Remark \ref{rem:elementary} and up to switch $D_3$ with $D_4$, we have that either $D_3=D_1-v_{e_1}+s=D_2-v_{e_2}+s$ or $D_3=D_1-v_{e_1}+t=D_2-v_{e_2}+s$. We can rule out the second possibility as follows. If $D_3=D_1-v_{e_1}+t=D_2-v_{e_2}+s$, then $D_4=D_1-v_{e_1}+s=D_2-v_{e_2}+t$, hence $D_3(t)=D_1(t)+1=D_2(t)$ and $D_4(t)=D_1(t)=D_2(t)+1$, which is a contradiction. It follows that $D_3=D_1-v_{e_1}+s=D_2-v_{e_2}+s$, giving the poset described in item (1) of the statement with $D:=D_1-v_{e_1}=D_2-v_{e_2}$. 

We move to Case (2). In this case, we define $\E:=\E_3\cap \E_4$, and hence $|\E_3|=|\E_4|=|\E|+1$. This means that $\E_3=\E\cup \{e_1\}$, $\E_4=\E\cup \{e_2\}$, $\E_1=\E_2=\E\cup\{e_1,e_2\}$ for some distinct edges $e_1,e_2\in E(\Gamma)$. 

Let us prove that $e_1$ and $e_2$ are parallel edges. Let $V_0$ be the set of vertices incident to both $e_1$ and $e_2$. 
Assume, by contradiction, that $|V_0|\le 1$. Let $v$ be a vertex not incident to $e_2$. Since $(\E\cup \{e_1\},D_3)$ is an elementary specialization of both $(\E\cup\{e_1,e_2\},D_1)$ and $(\E\cup\{e_1,e_2\},D_2)$, by  Remark \ref{rem:elementary} we have that $D_1(v)=D_3(v)=D_2(v)$. We can  argue similarly for any vertex not incident to $e_1$. We deduce that $D_1(v)=D_2(v)$ for every vertex $v\not\in V_0$. Since $D_1$ and $D_2$ have the same degree and since $|V_0|\le 1$, we have that $D_1=D_2$, which is a contradiction. This proves that $|V_0|=2$, i.e., $e_1$ and $e_2$ are parallel edges. 

Let $s$ and $t$ be the end-vertices of $e_1$ and $e_2$. Then, for $i=3,4$, we have four cases
\begin{enumerate}
    \item[(i)] either $D_i(s)=D_1(s)+1 = D_2(s)+1$,
    \item[(ii)] or $D_i(s) = D_1(s) = D_2(s)+1$,
    \item[(iii)] or $D_i(s) = D_1(s)+1 = D_2(s)$,
    \item[(iv)] or $D_i(s) = D_1(s) = D_2(s)$. 
\end{enumerate}
With the same argument used above, Cases (i) and (iv) would imply that $D_1=D_2$, which is a contradiction. In Case (ii) we have that $D_i=D_1-v_{e_{5-i}}+t=D_2-v_{e_{5-i}}+s$, which means that $D_1+t=D_2+s$. Similarly, in Case (iii) we have that $D_1+s=D_2+t$. So the same case must hold for both $i=3$ and $i=4$.
%The same analysis for the divisor $D_4$ give the following possibilities: either  $D_1+t=D_2+s$ or $D_1+s=D_2+t$ can be true, we have that $D_4$ fits in the same case as $D_3$.
This means that $D_3(v)=D_4(v)$ for every $v\in V(\Gamma)$, giving the poset described in item (2) of the statement  with $D:=D_3-v_{e_1}=D_4-v_{e_2}$.

Finally, we consider Case (3). In this case, we define $\E:=\E_3=\E_4$, which means that $\E_1=\E_2=\E\cup\{e\}$ for some edge $e\in E(\Gamma)$. Since $g$ is injective, we have that $(\E,D_3)$ and $(\E,D_4)$ are different. Hence they are the two pseudo-divisor of type $(\E,D')$ to which both $(\E\cup\{e\},D_1)$ and $(\E\cup\{e\},D_2)$ specialize, described in Remark \ref{rem:elementary}. This implies that $D_1=D_2$, which is a contradiction with the fact that $g(\alpha)\neq g(\beta)$ .
\end{proof}

\begin{Cor}\label{cor:gradedP0}
Let $\Gamma$ be a graph and $v_0$ a vertex of $\Gamma$. Let $g\col \PosFix\to \QS_{v_0}(\Gamma)$ be an injective morphism of ranked posets. Then there are parallel edges $e_1,e_2\in E(\Gamma)$ and a divisor $D$ on $\Gamma$ such that 
     \[
g(\PosFix)=\{
(\{e_1\},D+v_{e_1}),(\{e_2\},D+v_{e_2}), (\emptyset,D+s), (\emptyset,D+t)
\},
\]
where $s$ and $t$ are the end-vertices of $e_1$ and $e_2$.
\end{Cor}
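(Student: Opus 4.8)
The plan is to deduce this directly from Proposition~\ref{prop:P0}, using the extra rank-preserving hypothesis to eliminate one of the two possible shapes of $g(\PosFix)$ and to force $\E=\emptyset$ in the surviving one.

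First I would check that $g$ meets the hypothesis of Proposition~\ref{prop:P0}, i.e. that an injective morphism of ranked posets preserves cover relations. The key remark is that the rank function of a ranked poset is strictly monotone: if $a<b$ then $\rk(a)<\rk(b)$, since a saturated chain from $a$ to $b$ raises the rank by one at each step. Hence, if $x$ covers $y$ in $\PosFix$, then $g(x)>g(y)$ (by order-preservation together with injectivity, which rules out $g(x)=g(y)$) and $\rk(g(x))=\rk(x)=\rk(y)+1=\rk(g(y))+1$; any $z$ with $g(y)<z<g(x)$ would then have rank strictly between $\rk(g(y))$ and $\rk(g(y))+1$, which is impossible. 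Thus $g(x)$ covers $g(y)$, so $g$ preserves cover relations and Proposition~\ref{prop:P0} applies, producing parallel edges $e_1,e_2$, a subset $\E\subset E(\Gamma)\setminus\{e_1,e_2\}$, and one of the two descriptions, item (1) or item (2), of $g(\PosFix)$.

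Next I would read off the ranks. In $\PosFix$ the minimal elements $\gamma,\delta$ have rank $0$ while $\alpha,\beta$ have rank $1$; since $g$ preserves rank, the two smallest elements of $g(\PosFix)$ must be pseudo-divisors of rank $0$, that is, with empty edge set. In Case (2) the two minimal elements are $(\E\cup\{e_1\},D+v_{e_1})$ and $(\E\cup\{e_2\},D+v_{e_2})$, of rank $|\E|+1\ge 1$; this contradicts $\rk(g(\gamma))=\rk(g(\delta))=0$, so Case (2) cannot occur. Therefore Case (1) holds, and its minimal elements $(\E,D+s)$ and $(\E,D+t)$ have rank $|\E|=0$, forcing $\E=\emptyset$. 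Substituting $\E=\emptyset$ into the description in item (1) of Proposition~\ref{prop:P0} yields exactly the claimed set, with $D$ a divisor on $\Gamma$, which completes the argument.

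I do not expect a genuine obstacle: the corollary is essentially rank bookkeeping layered on top of Proposition~\ref{prop:P0}. The only point that deserves a line of care is the elementary observation that an injective morphism of ranked posets preserves cover relations, which is precisely where the strict monotonicity of the rank function enters.
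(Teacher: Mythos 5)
Your proof is correct and follows essentially the same route as the paper: apply Proposition~\ref{prop:P0} and use rank preservation to exclude case (2) and force $\E=\emptyset$ (the paper reads off $\rk(g(\alpha))=\rk(g(\beta))=1$ where you use $\rk(g(\gamma))=\rk(g(\delta))=0$, an immaterial difference). Your explicit verification that an injective morphism of ranked posets preserves cover relations is a detail the paper leaves implicit, and it is a welcome addition rather than a deviation.
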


\begin{proof}
The rank of the elements $\alpha$ and $\beta$ in $\PosFix$ is 1 and $g$ is a morphism of ranked posets. Then $g(\alpha)$ and $g(\beta)$ have rank 1 in $\QS(\Gamma)$, and hence $g(\PosFix)$ is the poset described in item (1) of Proposition \ref{prop:P0} with $\E=\emptyset$.
\end{proof}

\begin{Def}
Let $\Gamma$ be a graph and $v_0$ be a vertex of $\Gamma$. 
%and $v_0$ a vertex of $\Gamma$.
Assume that $e_1$ and $e_2$ are parallel edges of $\Gamma$ and $D$ is a divisor in $\QS_{v_0}(\Gamma,\{e_1,e_2\})$. We denote by $s$ and $t$ the end-vertices of $e_1$ and $e_2$. We let $\mathbf{R}_{e_1,e_2}(D)$ be the ranked sub-poset of $\QS_{v_0}(\Gamma)$ given by
\[
\mathbf{R}_{e_1,e_2}(D)=\left\{ 
\begin{array}{l}
(\{e_1,e_2\},D), \; (\{e_1\},D-v_{e_2}+s), \; (\{e_1\},D-v_{e_2}+t), \\
 (\{e_2\},D-v_{e_1}+s),  (\{e_2\},D-v_{e_1}+t), \\
 (\emptyset,D-v_{e_1}-v_{e_2}+2s),  (\emptyset,D-v_{e_1}-v_{e_2}+s+t), (\emptyset,D-v_{e_1}-v_{e_2}+2s)
 \end{array}
\right\}.
\]

The Hasse diagram of $\mathbf{R}_{e_1,e_2}(D)$ is drawn in Figure \ref{fig:gP1}. In the figure, we only draw the edges $e_1$ and $e_2$, instead of the whole graph $\Gamma$. 
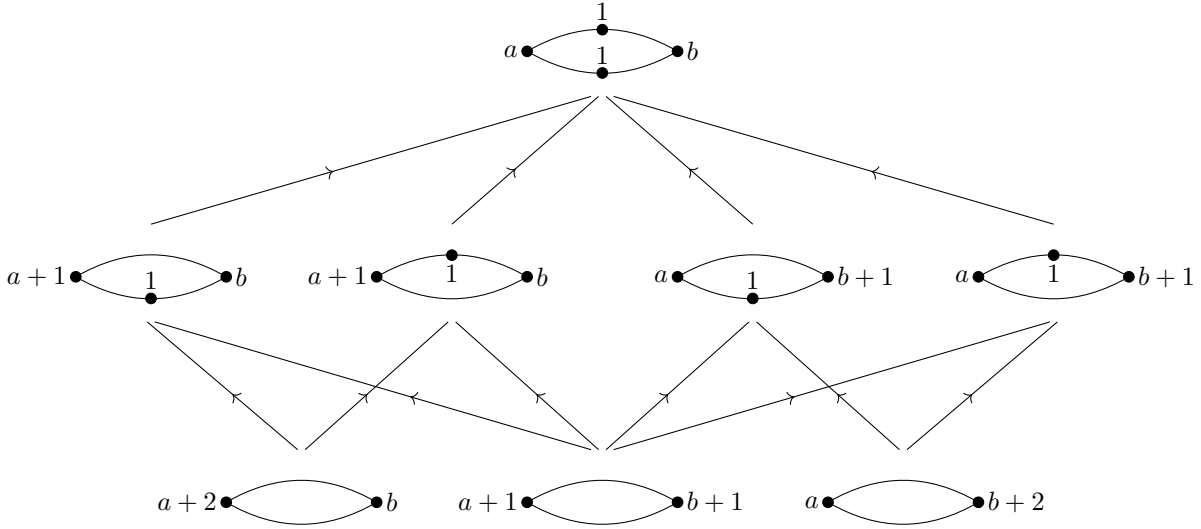
\begin{figure}[h]
    \centering
    \begin{tikzpicture}
     \begin{scope}
    \draw[fill] (0,0) circle (2pt) node[left] {$a$};
    \draw[fill] (2,0) circle (2pt) node[right] {$b$};
    \draw[fill] (1,0.288) circle (2pt) node[above] {$1$};
    \draw[fill] (1,-0.288) circle (2pt) node[above] {$1$};
    
    \draw (0,0) to [out=30, in=150] (2,0);
    \draw (0,0) to [out=-30, in=-150] (2,0);
    \end{scope}

    \begin{scope}[shift = {(-6,-3)}]
    \draw[fill] (0,0) circle (2pt) node[left] {$a+1$};
    \draw[fill] (2,0) circle (2pt) node[right] {$b$};
    \draw[fill] (1,-0.288) circle (2pt) node[above] {$1$};
    \draw (0,0) to [out=30, in=150] (2,0);
    \draw (0,0) to [out=-30, in=-150] (2,0);
    \end{scope}
    
    \begin{scope}[shift = {(-2,-3)}]
    \draw[fill] (0,0) circle (2pt) node[left] {$a+1$};
    \draw[fill] (2,0) circle (2pt) node[right] {$b$};
    \draw[fill] (1,0.288) circle (2pt) node[below] {$1$};
    \draw (0,0) to [out=30, in=150] (2,0);
    \draw (0,0) to [out=-30, in=-150] (2,0);
    \end{scope}
    
    \begin{scope}[shift = {(2,-3)}]
    \draw[fill] (0,0) circle (2pt) node[left] {$a$};
    \draw[fill] (2,0) circle (2pt) node[right] {$b+1$};
    \draw[fill] (1,-0.288) circle (2pt) node[above] {$1$};
    \draw (0,0) to [out=30, in=150] (2,0);
    \draw (0,0) to [out=-30, in=-150] (2,0);
    \end{scope}
    
    \begin{scope}[shift = {(6,-3)}]
    \draw[fill] (0,0) circle (2pt) node[left] {$a$};
    \draw[fill] (2,0) circle (2pt) node[right] {$b+1$};
    \draw[fill] (1,0.288) circle (2pt) node[below] {$1$};
    \draw (0,0) to [out=30, in=150] (2,0);
    \draw (0,0) to [out=-30, in=-150] (2,0);
    \end{scope}
    
    \begin{scope}[shift = {(-4,-6)}]
    \draw[fill] (0,0) circle (2pt) node[left] {$a+2$};
    \draw[fill] (2,0) circle (2pt) node[right] {$b$};
    
    \draw (0,0) to [out=30, in=150] (2,0);
    \draw (0,0) to [out=-30, in=-150] (2,0);
    \end{scope}
    
    \begin{scope}[shift = {(0,-6)}]
    \draw[fill] (0,0) circle (2pt) node[left] {$a+1$};
    \draw[fill] (2,0) circle (2pt) node[right] {$b+1$};
    
    \draw (0,0) to [out=30, in=150] (2,0);
    \draw (0,0) to [out=-30, in=-150] (2,0);
    \end{scope}
    
    \begin{scope}[shift = {(4,-6)}]
    \draw[fill] (0,0) circle (2pt) node[left] {$a$};
    \draw[fill] (2,0) circle (2pt) node[right] {$b+2$};
    
    \draw (0,0) to [out=30, in=150] (2,0);
    \draw (0,0) to [out=-30, in=-150] (2,0);
    \end{scope}
    
    \draw[decoration={markings, mark=at position 0.6 with {\arrow{<}}}, postaction={decorate}] (0.85,-0.6) -- (-5,-2.3);
    \draw[decoration={markings, mark=at position 0.6 with {\arrow{<}}}, postaction={decorate}] (0.95,-0.6) -- (-1,-2.3);
    \draw[decoration={markings, mark=at position 0.6 with {\arrow{<}}}, postaction={decorate}] (1.05,-0.6) -- (3,-2.3);
    \draw[decoration={markings, mark=at position 0.6 with {\arrow{<}}}, postaction={decorate}] (1.15,-0.6) -- (7,-2.3);
    \draw[decoration={markings, mark=at position 0.6 with {\arrow{<}}}, postaction={decorate}] (-5.05, -3.6) -- (-3.05, -5.3);
    \draw[decoration={markings, mark=at position 0.6 with {\arrow{<}}}, postaction={decorate}] (-4.95, -3.6) -- (0.85, -5.3);
    \draw[decoration={markings, mark=at position 0.6 with {\arrow{<}}}, postaction={decorate}] (-1.05, -3.6) -- (-2.95, -5.3);
    \draw[decoration={markings, mark=at position 0.6 with {\arrow{<}}}, postaction={decorate}] (-0.95, -3.6) -- (0.95, -5.3);
    \draw[decoration={markings, mark=at position 0.6 with {\arrow{<}}}, postaction={decorate}] (2.95, -3.6) -- (1.05, -5.3);
    \draw[decoration={markings, mark=at position 0.6 with {\arrow{<}}}, postaction={decorate}] (3.05, -3.6) -- (4.95, -5.3);
    \draw[decoration={markings, mark=at position 0.6 with {\arrow{<}}}, postaction={decorate}] (6.95, -3.6) -- (1.15, -5.3);
    \draw[decoration={markings, mark=at position 0.6 with {\arrow{<}}}, postaction={decorate}] (7.05, -3.6) -- (5.05, -5.3);
    % \draw (5,-0.6) -- (5,-2.4);
    % \draw (1.2,-0.6) -- (4.8,-2.4);
    % \draw (4.8,-0.6) -- (1.2,-2.4);
    \end{tikzpicture}
    \caption{The Hasse diagram of the poset $\mathbf{R}_{e_1,e_2}(D)$}
     \label{fig:gP1}
\end{figure}
\end{Def}

Recall that we write $\PosFixx=\{\alpha_1,\beta_1,\beta_2,\beta_3,\beta_4, \gamma_1, \gamma_2, \gamma_3,\gamma_4\}$ (see  Definition \ref{def:fix}). 
Notice that $\mathbf R$ and  $\mathbf{R}_{e_1,e_2}(D)$ are isomorphic ranked posets.

\begin{Prop}
\label{prop:image_parallel}
Let $\Gamma$ be a graph and $v_0$ a vertex of $\Gamma$.
Suppose that $g\col \PosFixx \to \QS_{v_0}(\Gamma)$ is an injective morphism of ranked posets. Then $g(\alpha_1)=(\{e_1,e_2\}, D)$, for some parallel edges $e_1,e_2$ of $\Gamma$ and a divisor $D\in \QS_{v_0}(\Gamma,\{e_1,e_2\})$, and $g(\PosFixx)=\mathbf{R}_{e_1,e_2}(D)$.
\end{Prop}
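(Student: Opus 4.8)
The plan is to use the copy of $\PosFix$ sitting in the bottom two ranks of $\PosFixx$, apply Corollary \ref{cor:gradedP0} to produce a pair of parallel edges, and then recover all eight images of $g$ as elementary specializations of $g(\alpha_1)$. First I would record that, because $g$ is a morphism of ranked posets and the rank of a pseudo-divisor in $\QS_{v_0}(\Gamma)$ is $|\E|$, the map $g$ preserves cover relations: if $x$ covers $y$ then $g(x)>g(y)$ with consecutive ranks, and in $\QS_{v_0}(\Gamma)$ a strict relation with $\rk$ differing by one is exactly an elementary specialization. In particular $g(\alpha_1)=(\E,D)$ has $|\E|=2$, each $g(\beta_i)$ has rank $1$, and each $g(\gamma_j)$ has empty edge set.

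Next I would restrict $g$ to $\{\beta_1,\beta_2,\gamma_1,\gamma_2\}$. The induced subposet is isomorphic to $\PosFix$ as a ranked poset, with the $\beta$'s at rank $1$ and the $\gamma$'s at rank $0$, so this restriction is an injective morphism of ranked posets $\PosFix\to\QS_{v_0}(\Gamma)$. Corollary \ref{cor:gradedP0} then yields distinct \emph{parallel} edges $e_1,e_2$, with common end-vertices $s,t$, underlying $g(\beta_1)$ and $g(\beta_2)$. Since $g(\alpha_1)$ covers both $g(\beta_1)$ and $g(\beta_2)$, each of $e_1,e_2$ lies in $\E$; as $e_1\neq e_2$ and $|\E|=2$, I conclude $\E=\{e_1,e_2\}$, and $D\in\QS_{v_0}(\Gamma,\{e_1,e_2\})$ by \eqref{eq:QDE}. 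This establishes the first assertion.

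It remains to identify the images of the $\beta_i$ and $\gamma_j$. As $e_1,e_2$ are non-loops, Remark \ref{rem:elementary} shows that the elementary specializations of $g(\alpha_1)=(\{e_1,e_2\},D)$ are exactly the four rank-$1$ elements $(\{e_2\},D-v_{e_1}+s)$, $(\{e_2\},D-v_{e_1}+t)$, $(\{e_1\},D-v_{e_2}+s)$, $(\{e_1\},D-v_{e_2}+t)$ of $\mathbf{R}_{e_1,e_2}(D)$; since $g(\alpha_1)$ covers the four distinct elements $g(\beta_1),\dots,g(\beta_4)$, the latter coincide with this set. Applying Remark \ref{rem:elementary} once more to each of these rank-$1$ elements, their covers-below are the three rank-$0$ elements $(\emptyset,D-v_{e_1}-v_{e_2}+2s)$, $(\emptyset,D-v_{e_1}-v_{e_2}+s+t)$, $(\emptyset,D-v_{e_1}-v_{e_2}+2t)$ of $\mathbf{R}_{e_1,e_2}(D)$, the middle one being the only one covered by all four. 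Hence each $g(\gamma_j)$ lies in this three-element set; since $\gamma_2$ is covered by all four $\beta_i$, the element $g(\gamma_2)$ is the central one, and $g(\gamma_1),g(\gamma_3)$ are the remaining two, which are distinct by injectivity. Collecting these facts gives $g(\PosFixx)=\mathbf{R}_{e_1,e_2}(D)$.

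The one genuinely delicate point is deriving the parallelism of $e_1$ and $e_2$; the rest is bookkeeping with elementary specializations and cover relations. That parallelism is precisely the content of Corollary \ref{cor:gradedP0}, so the crux is recognizing the copy of $\PosFix$ inside $\PosFixx$ to which the corollary applies. Once it is available, injectivity together with the cover relations forces the remaining six images uniquely.
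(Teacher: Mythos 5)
Your proof is correct and follows essentially the same route as the paper: both arguments reduce to the embedded copy (or copies) of $\PosFix$ via Corollary \ref{cor:gradedP0} to obtain the parallel pair $e_1,e_2$, and then use Remark \ref{rem:elementary} to enumerate the elementary specializations of $(\{e_1,e_2\},D)$ and pin down the remaining images by counting and injectivity. The only cosmetic difference is that the paper first lists the four candidate rank-one elements below $g(\alpha_1)$ and applies the corollary to both subposets $\{\beta_1,\beta_2,\gamma_1,\gamma_2\}$ and $\{\beta_3,\beta_4,\gamma_2,\gamma_3\}$, whereas you extract parallelism from a single copy and recover the rest by the covering count; both are sound.
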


\begin{proof}
Since $g$ is a morphism of ranked posets and the rank of $\alpha_1$ is 2, we have $g(\alpha_1)=(\{e_1,e_2\},D)$ for some edges $e_1$ and $e_2$ of $\Gamma$ and a divisor $D$ on $\QS_{v_0}(\Gamma,\{e_1,e_2\})$. Let $s_1$ and $t_1$ (respectively, $s_2$ and $t_2$) be the (possibly coincident) end-vertices of $e_1$ (respectively, of $e_2$). 
By Remark \ref{rem:elementary}, there are at most 4 pseudo-divisors $(\E',D')$ of rank 1 (i.e., with $|\E'|=1$) such that $(\E',D')<(\{e_1,e_2\},D)$: they are the pseudo-divisors of the set
\begin{equation}\label{eq:four}
\left\{
(\{e_1\}, D-v_{e_2}+s_2), \;
(\{e_1\}, D-v_{e_2}+t_2), \; 
(\{e_2\}, D-v_{e_1}+s_1), \;
(\{e_2\}, D-v_{e_1}+t_1)\right\}.
\end{equation}
Since $g$ is an injective morphism of ranked poset,  the set in Equation \eqref{eq:four} is equal to $\{g(\beta_1), g(\beta_2), g(\beta_3), g(\beta_4)\}$. In particular neither $e_1$  nor $e_2$ are loops, i.e., $s_1\neq t_1$ and $s_2\neq t_2$.\par 

The induced subposets 
$\{\beta_1,\beta_2,\gamma_1,\gamma_2\}$ and $\{\beta_3,\beta_4,\gamma_2,\gamma_3\}$ of $\PosFixx$ are isomorphic to the poset $\PosFix$. By Corollary \ref{cor:gradedP0}, we see that $e_1$ and $e_2$ are parallel edges of $\Gamma$ and, without loss of generality, we have that $s:=s_1=s_2$, $t:=t_1=t_2$, and
\begin{align*}
    g(\beta_1)=(\{e_1\}, D-v_{e_2}+s), \; \; &
    g(\beta_2)=(\{e_2\}, D-v_{e_1}+s),\\
 g(\beta_3)=(\{e_1\}, D-v_{e_2}+t), \; \; &
g(\beta_4)=(\{e_2\}, D-v_{e_1}+t).
\end{align*}

Finally, there are exactly 3 pseudo-divisors of rank 0, i.e., of type  $(\emptyset, D'')$, that are smaller than at least one pseudo-divisor in the set $\mathcal U=\{g(\beta_1),g(\beta_2),g(\beta_3),g(\beta_4)\}$. By Remark \ref{rem:elementary}, they are 
\begin{align*}
(\emptyset, D''_1) := &(\emptyset, D-v_{e_1}-v_{e_2}+2s),\\
(\emptyset, D''_2) := &(\emptyset, D-v_{e_1}-v_{e_2}+s+t),\\
(\emptyset, D''_3) := & (\emptyset, D-v_{e_1}-v_{e_2}+2t).
\end{align*}
The first and the third are smaller than exactly two of the pseudo-divisors in the set $\mathcal U$, while the second is smaller that every pseudo-divisor in $\mathcal U$. Thus we have
$g(\gamma_1)=(\emptyset, D''_1)$, $g(\gamma_2)=(\emptyset, D''_2)$, and $g(\gamma_3)=(\emptyset, D''_3)$.
This finishes the proof.
\end{proof}

\begin{Rem}
In the proofs of Propositions \ref{prop:P0} and \ref{prop:image_parallel}, we never used that the divisors were quasistable. So these results remain true if we change the target of the map $g$ with the poset of all pseudo-divisors on $\Gamma$.
\end{Rem}

\begin{Lem}
\label{lem:parallel_edges}
Let $\Gamma$ be a graph and $(\E,D)$ a pseudo-divisor on $\Gamma$. Assume that $e\in \E$ is a non-loop edge of $\Gamma$. Let $s$ and $t$ be the end-vertices of $e$. 
If $e_0\in \E$ is an edge of $\Gamma$ such that there exists a pseudo-divisor $(\E\setminus\{e,e_0\},D')$ on $\Gamma$ smaller than both $(\E\setminus\{e\},D-v_e+s)$ and $(\E\setminus\{e\},D-v_e+t)$, then $e$ and $e_0$ are parallel edges of $\Gamma$.
\end{Lem}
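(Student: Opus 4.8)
The plan is to unwind the hypothesis directly using the explicit description of elementary specializations in Remark \ref{rem:elementary}, and then to read off the conclusion from a single divisor identity. The first observation is that both comparisons are genuine elementary specializations over $e_0$. Indeed, $(\E\setminus\{e\},D-v_e+s)$ and $(\E\setminus\{e\},D-v_e+t)$ have rank $|\E|-1$, and a single pseudo-divisor lying below both of these distinct elements cannot have the same rank (it would have to equal both), so $e_0\neq e$ and $(\E\setminus\{e,e_0\},D')$ has rank $|\E|-2$. Since $(\E\setminus\{e\})\setminus(\E\setminus\{e,e_0\})=\{e_0\}$, each of the two inequalities is realized by an elementary specialization over $e_0$.

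Next I would rule out that $e_0$ is a loop. If $e_0$ were a loop with end-vertex $p$, Remark \ref{rem:elementary} provides a unique elementary specialization over $e_0$ from each source, forcing
\[
D'=D-v_e+s-v_{e_0}+p=D-v_e+t-v_{e_0}+p,
\]
hence $s=t$, contradicting that $e$ is a non-loop edge. So $e_0$ is a non-loop edge; let $p,q$ be its end-vertices. Applying Remark \ref{rem:elementary} to the edge $e_0$ in each source, the inequality $(\E\setminus\{e,e_0\},D')\le(\E\setminus\{e\},D-v_e+s)$ forces $D'=D-v_e+s-v_{e_0}+x$ for some $x\in\{p,q\}$, while $(\E\setminus\{e,e_0\},D')\le(\E\setminus\{e\},D-v_e+t)$ forces $D'=D-v_e+t-v_{e_0}+y$ for some $y\in\{p,q\}$. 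Equating the two expressions and cancelling the common divisor $D-v_e-v_{e_0}$ yields the identity $s+x=t+y$ as divisors on $\Gamma^{E(\Gamma)}$.

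Finally I would analyze this identity. Read as an equality of effective divisors of degree $2$, it says that the multisets $\{s,x\}$ and $\{t,y\}$ coincide. Since $s\neq t$, the only matching is $x=t$ and $y=s$; in particular $s,t\in\{p,q\}$, and because $s\neq t$ this gives $\{p,q\}=\{s,t\}$. Thus $e_0$ is incident to both $s$ and $t$, exactly as $e$ is, so by definition $e$ and $e_0$ are parallel edges, which is the desired conclusion.

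I expect the only delicate point to be bookkeeping rather than any genuine difficulty: one must check that both comparisons drop the \emph{same} edge $e_0$ (immediate from the edge-set computation above) and handle the loop case so that the degree-$2$ identity $s+x=t+y$ is actually available. Once that identity is in hand, the passage to ``$e$ and $e_0$ are parallel'' is purely formal and uses only $s\neq t$.
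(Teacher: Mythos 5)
Your proof is correct and follows essentially the same route as the paper's: both arguments apply Remark \ref{rem:elementary} to write the two elementary specializations over $e_0$ explicitly, derive a contradiction with $s\neq t$ in the degenerate cases, and conclude that the end-vertices of $e_0$ must be $\{s,t\}$. Your preliminary bookkeeping (that $e_0\neq e$ and that both comparisons are elementary specializations over $e_0$) and the multiset identity $s+x=t+y$ are just a cleaner packaging of the pointwise divisor comparison the paper carries out.
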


\begin{proof}
Set $D_1:=D-v_e+s$ and $D_2:=D-v_e+t$ and recall Remark \ref{rem:elementary}. For $v\in V(\Gamma)$, we have 
\begin{align*}
&D_1(v)=\begin{cases}
D(v)&¨\text{ if }v\neq s,\\
D(v) + 1&¨\text{ if }v=s,
\end{cases}
&
D_2(v)=
\begin{cases}
D(v)&¨\text{ if }v\neq t,\\
D(v) + 1&¨\text{ if }v=t.
\end{cases}
\end{align*}
Let $s_0$ and $t_0$ be the end-vertices of $e_0$. We can assume without loss of generality that
\[
D'(v)=\begin{cases}
D_1(v)&¨\text{ if }v\neq s_0,\\
D_1(v) + 1&¨\text{ if }v= s_0.
\end{cases}
\]
Assume by contradiction that 
\[
D'(v)=\begin{cases}
D_2(v)&¨\text{ if }v\neq s_0,\\
D_2(v) + 1&¨\text{ if }v= s_0.
\end{cases}
\]
Hence we would have that $D_1=D_2$, a contradiction. 
Then we have that $s_0\neq t_0$ and
\[
D'(v)=\begin{cases}
D_2(v)&¨\text{ if }v\neq t_0,\\
D_2(v)+1&¨\text{ if }v= t_0.
\end{cases}
\]
If $t_0\not\in \{s,t\}$, then $D'(t_0)=D_2(t_0)+1 = D(t_0)+1$ and $D'(t_0)=D_1(t_0)=D(t_0)$ which is a contradiction. So we have that $t_0\in\{s,t\}$ and, analogously, we have that $s_0\in\{s,t\}$. This proves that $\{s_0,t_0\}=\{s,t\}$ and hence the edges $e,e_0$ are parallel edges of $\Gamma$.
\end{proof}

\section{Torelli theorem for graphs}

In this section we will prove the following Torelli theorem for graphs:

\begin{Thm}\label{thm:main1}
Let $\Gamma$ and $\Gamma'$ be graphs. The posets $\QS(\Gamma)$ and $\QS(\Gamma')$ are isomorphic if and only if there is a bijection between the biconnected components of $\Gamma/\bridges(\Gamma)$ and $\Gamma'/\bridges(\Gamma')$ such that the corresponding components are isomorphic as pure graphs.
\end{Thm}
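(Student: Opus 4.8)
The plan is to prove the theorem by showing that the poset $\QS(\Gamma)$ reconstructs, in a canonical way, enough combinatorial data of $\Gamma/\bridges(\Gamma)$ to recover its biconnected components up to isomorphism as pure graphs. First I would reduce to the bridgeless case: by Remark \ref{rem:iso-bridge}, $\QS(\Gamma)\cong\QS(\Gamma/\bridges(\Gamma))$, so the poset cannot see bridges at all, and we may assume throughout that $\Gamma=\Gamma/\bridges(\Gamma)$ is bridgeless. This immediately makes the ``only if'' direction the substantive one: an isomorphism $\QS(\Gamma)\cong\QS(\Gamma')$ must be shown to force a biconnected-component-wise isomorphism, while the ``if'' direction should follow once I establish that $\QS$ behaves well under the decomposition into biconnected components --- concretely, that $\QS(\Gamma)$ is determined by the family $\{\QS(\Gamma_i)\}$ where the $\Gamma_i$ are the biconnected components (since a quasistable divisor is determined by its restriction to each block, the polarization being canonical and $\delta_V$ splitting over blocks).

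The heart of the argument is to extract intrinsic poset-theoretic invariants that recover the edge set and the parallelism structure of $\Gamma$. The special posets studied in Section 4 are exactly the tool for this: by Proposition \ref{prop:P0} and Corollary \ref{cor:gradedP0}, an embedding of $\PosFix$ as ranked posets detects a pair of parallel edges together with a ``base'' divisor $D$, and by Proposition \ref{prop:image_parallel} an embedding of $\PosFixx$ pins down the full local picture $\mathbf{R}_{e_1,e_2}(D)$ around two parallel edges. So the strategy is: first recover $E(\Gamma)$ (equivalently $\ND(\Gamma)$) from the poset --- the rank function gives $|\E|$, and by Remark \ref{rem:AP_46}(7) the maximal elements correspond to spanning trees, while rank-one elements and covering relations encode individual edges. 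Then recover which edges are parallel using the $\PosFix$/$\PosFixx$ configurations and Lemma \ref{lem:parallel_edges}, which says precisely that the existence of a common lower bound for the two specializations $D-v_e+s$ and $D-v_e+t$ forces $e,e_0$ to be parallel. Combining these, I would argue that an isomorphism of posets induces a bijection on edges preserving the cycle (equivalently spanning-tree, equivalently bond) structure, i.e.\ a \emph{weakly cyclic equivalence}; this matches the language set up in Remark \ref{rem:weak_cyclic_equiv_trees}.

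At this point the key subtlety appears: cyclic (or weakly cyclic) equivalence is strictly weaker than isomorphism of graphs --- Whitney's theorem shows $2$-isomorphic graphs need not be isomorphic, and the introduction explicitly notes that the Caporaso--Viviani poset only recovers the cyclic equivalence class. So the crucial gain of $\QS(\Gamma)$ over the Delaunay/Albanese data must be that the \emph{finer} poset structure (it refines the Delaunay decomposition) rigidifies the $2$-isomorphism into an honest isomorphism on each biconnected component. The plan here is to show that the extra covering relations among low-rank elements detect not just which edges are parallel but how edges meet at vertices --- i.e.\ the incidence structure --- thereby upgrading the weakly cyclic equivalence to a genuine graph isomorphism on each block. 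Concretely I would reconstruct the vertex set of a block from the rank-zero elements (quasistable divisors with $\E=\emptyset$) and their relations to rank-one elements, reading off the two end-vertices of each edge via the two elementary specializations of Remark \ref{rem:elementary}; the constraints imposed by quasistability of $\mu_{\can}$ then should force the incidences to be preserved. Finally I would assemble these per-block reconstructions, using the articulation-vertex decomposition, into the required bijection of biconnected components.

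The step I expect to be the main obstacle is precisely this last upgrade from combinatorial (cycle/cut) equivalence to genuine graph isomorphism of biconnected components: one must prove that the poset isomorphism respects not merely the matroid of $\Gamma$ but the incidence geometry, and that within a biconnected component this is enough to reconstruct the graph on the nose. Handling biconnected graphs with loops, with multiple parallel edges (where several distinct $\PosFixx$-configurations overlap), and correctly gluing blocks at articulation vertices --- together with verifying that the reconstruction is insensitive to the choice of base vertex $v_0$ via Proposition \ref{prop:one-QD} --- will require the most care.
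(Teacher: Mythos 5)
Your outline tracks the paper's proof at the level of overall structure (reduce to pure, bridgeless graphs; detect parallel edges via the special posets $\PosFix$ and $\PosFixx$; extract a weakly cyclic equivalence $f_E$ as in Proposition \ref{prop:fE}; treat biconnected graphs first and then glue along articulation vertices via the product decomposition of Proposition \ref{prop:2-decomposition}). But the step you yourself flag as the main obstacle --- upgrading the cyclic equivalence to a genuine isomorphism on each biconnected component --- is exactly where your concrete suggestion breaks down, and you do not supply the idea that actually closes the gap. You propose to ``reconstruct the vertex set of a block from the rank-zero elements\dots reading off the two end-vertices of each edge via the two elementary specializations.'' The rank-zero elements of $\QS_{v_0}(\Gamma)$ are divisors on $\Gamma$, not vertices: the two covers below $(\{e\},D)$ are $(\emptyset, D-v_e+s)$ and $(\emptyset, D-v_e+t)$, while for a different edge $e'$ sharing the end-vertex $s$ the corresponding rank-zero element is $(\emptyset, D'-v_{e'}+s)$ with $D'\ne D$ in general. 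The poset gives no canonical way to recognize that these two distinct rank-zero elements ``add $1$ at the same vertex,'' so incidence of edges at a common vertex is not visible from covering relations among low-rank elements alone. What the paper actually does is characterize vertex stars poset-theoretically: Lemma \ref{lem:vertex} shows that for a non-articulation vertex $v_1$ there is a pseudo-divisor $(\E_1,D_1)$, with $\E_1$ maximally nondisconnecting in $\Gamma\setminus\{v_1\}$, such that for every proper subset $S\subsetneqq E(v_1)$ there is a \emph{unique} $D_S$ with $(\E_1,D_1)\le(\E_1\cup S,D_S)$; and Lemma \ref{lem:bond} proves the converse, namely that any bond enjoying this uniqueness property consists of edges all incident to a single vertex. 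Transporting this uniqueness property through the poset isomorphism is what shows $f_E(E(v_1))=E(v_1')$ and hence that each biconnected component is reconstructed on the nose. This characterization is the essential new idea of the proof and is absent from your plan.

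Two smaller omissions: because of special pairs, the edge bijection $f_E$ is only compatible with $f$ up to the equivalence of Definition \ref{def:sim}, so one must first replace $f$ by the corrected isomorphism $h_f$ of Proposition \ref{prop:hf} before the identity $\epsilon_{\Gamma'}\circ f=f_E\circ\epsilon_\Gamma$ can be used in the reconstruction; and the gluing step at articulation vertices is not automatic --- one must prove that $f$ respects the product decomposition $\QS_{v_0}(\Gamma)\cong\QS_{v_0}(\Gamma_1)\times\QS_{v_0}(\Gamma_2)$, which the paper does by tracking elementary specializations over edges of each factor and using the connectedness of $\QS_{v_0}(\Gamma_2)$. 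Your proposal gestures at both points but provides an argument for neither.
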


As a particular case of Theorem \ref{thm:main1}, we get the following corollary.

\begin{Cor}\label{cor:main-pure-biconnected}
Let $\Gamma$ and $\Gamma'$ be biconnected pure graphs. The posets $\QS(\Gamma)$ and $\QS(\Gamma')$ are isomorphic if and only if $\Gamma$ and $\Gamma'$ are isomorphic.
\end{Cor}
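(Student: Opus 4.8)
The plan is to obtain this corollary as an immediate specialization of Theorem \ref{thm:main1}: I would show that when $\Gamma$ and $\Gamma'$ are biconnected and pure, the combinatorial condition appearing on the right-hand side of that theorem collapses to the bare statement ``$\Gamma$ and $\Gamma'$ are isomorphic.'' All the mathematical content already lives in Theorem \ref{thm:main1}, so the task is purely to unwind the two hypotheses (biconnectedness and purity) and feed the result into the theorem.

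First I would record the two structural consequences of biconnectedness. Since $\Gamma$ is biconnected it has no bridges (this is exactly the observation ``if $\Gamma$ has a bridge, then $\Gamma$ is not biconnected'' noted in the Graphs subsection), so $\bridges(\Gamma)=\emptyset$ and hence the bridge-contraction is trivial: $\Gamma/\bridges(\Gamma)=\Gamma$, and likewise $\Gamma'/\bridges(\Gamma')=\Gamma'$. Next, a biconnected component is by definition a maximal biconnected subgraph; as $\Gamma$ is itself biconnected and is a subgraph of itself, it is the unique maximal such subgraph. Therefore the canonical decomposition of $\Gamma$ into biconnected components is the singleton $\{\Gamma\}$, and similarly that of $\Gamma'$ is $\{\Gamma'\}$.

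Combining these facts with Theorem \ref{thm:main1}, a bijection between the biconnected components of $\Gamma/\bridges(\Gamma)=\Gamma$ and those of $\Gamma'/\bridges(\Gamma')=\Gamma'$ is a bijection between the singletons $\{\Gamma\}$ and $\{\Gamma'\}$; such a bijection exists uniquely and necessarily matches $\Gamma$ with $\Gamma'$. Under it, the condition ``corresponding components are isomorphic as pure graphs'' becomes exactly ``$\Gamma$ and $\Gamma'$ are isomorphic as pure graphs.'' Since $\Gamma$ and $\Gamma'$ are already pure (their weight functions vanish identically), a weight-preserving isomorphism is the same as a plain graph isomorphism, so ``isomorphic as pure graphs'' coincides with ``isomorphic.'' Thus Theorem \ref{thm:main1} gives $\QS(\Gamma)\cong\QS(\Gamma')$ if and only if $\Gamma\cong\Gamma'$, which is the claim.

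I do not anticipate a genuine obstacle here, as the argument is a formal deduction from Theorem \ref{thm:main1}. The only points that require care are the two elementary unwindings of the definition of biconnectedness, namely the absence of bridges and the fact that a biconnected graph constitutes a single biconnected component, together with the bookkeeping remark that for pure graphs the qualifier ``as pure graphs'' is vacuous.
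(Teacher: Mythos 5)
Your proposal is correct and follows exactly the paper's route: the paper presents this corollary as an immediate particular case of Theorem \ref{thm:main1}, relying on precisely the unwindings you spell out (a biconnected graph has no bridges, so $\Gamma/\bridges(\Gamma)=\Gamma$, and it is its own unique biconnected component). Your only addition is making these elementary reductions explicit, which the paper leaves to the reader.
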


We start by reducing to the case of pure graphs.

\begin{Prop}\label{prop:pure-reduction}
Let $\Gamma$ be a graph. Let $\Gamma_0$ be the pure graph with underlying graph equal to $\Gamma$. Then $\QS(\Gamma)$ is naturally isomorphic to $\QS(\Gamma_0)$.
\end{Prop}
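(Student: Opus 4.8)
The plan is to construct an explicit order-preserving bijection between $\QS(\Gamma)$ and $\QS(\Gamma_0)$ by shifting every divisor by a fixed divisor recording the weights. The point is that $\Gamma$ and $\Gamma_0$ share the same underlying graph, hence the same edges, valences and first Betti number; the only datum distinguishing the two posets is the canonical polarization, which enters through its degree $g_\Gamma-1$ versus $g_{\Gamma_0}-1$. So I would first pin down precisely how the two canonical polarizations differ.

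Writing $\mu$ (resp. $\mu_0$) for the canonical polarization of $\Gamma$ (resp. $\Gamma_0$), one checks that the explicit formula $\mu(v)=w_\Gamma(v)-1+\frac{\val(v)}{2}$ satisfies the defining hemisphere identity \eqref{eq:canonical-pol}, and likewise $\mu_0(v)=-1+\frac{\val(v)}{2}$; since the valences agree this yields the clean vertex-by-vertex relation $\mu(v)=\mu_0(v)+w_\Gamma(v)$. Equivalently, setting $D_w:=\sum_{v\in V(\Gamma)}w_\Gamma(v)\,v$, the two polarizations differ by $D_w$, a divisor of degree $g_\Gamma-g_{\Gamma_0}=\sum_v w_\Gamma(v)$. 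I would then define the candidate map by sending $(\E,D)\in\QS_{v_0}(\Gamma)$ to $(\E,D-D_w)\in\QS_{v_0}(\Gamma_0)$, where $D_w$ is extended by zero to the exceptional vertices of $\Gamma^\E$. The degrees match, since $\deg(D-D_w)=(g_\Gamma-1)-\sum_v w_\Gamma(v)=g_{\Gamma_0}-1$.

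The heart of the argument is that this shift preserves quasistability, which I would verify by a direct comparison of the functions $\beta$. For every $V\subset V(\Gamma^\E)$, the contribution $-\sum_{v\in V}w_\Gamma(v)$ produced by passing from $D$ to $D-D_w$ exactly cancels the same term produced by passing from $\mu^\E$ to $\mu_0^\E$ (both $w_\Gamma$ and the canonical polarizations vanish on exceptional vertices), so that $\beta_{\Gamma_0^\E,\,D-D_w}(V)=\beta_{\Gamma^\E,D}(V)$ identically. Since the two $\beta$-functions coincide and $v_0$ is unchanged, $(\E,D)$ is $v_0$-quasistable on $\Gamma$ if and only if $(\E,D-D_w)$ is $v_0$-quasistable on $\Gamma_0$; hence the map is a bijection, with inverse $(\E,D')\mapsto(\E,D'+D_w)$.

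Finally I would check that the bijection is an isomorphism of ranked posets and is natural. As $D_w$ is supported on $V(\Gamma)$ and vanishes on exceptional vertices, subtracting it commutes with every elementary specialization $(\E,D)\to(\E\setminus\{e\},D-v_e+u)$ (with $u$ an end-vertex of $e$); since every specialization is a composition of these (Remark \ref{rem:elementary}), both the map and its inverse preserve the order, and they clearly preserve the rank $|\E|$. Naturality follows from the fact that the shift commutes with the translation isomorphisms $\QS_{v_0}(\Gamma)\cong\QS_{v_1}(\Gamma)$ of Proposition \ref{prop:one-QD} — both are translations of the divisor — so the isomorphism is independent of $v_0$ and descends to $\QS(\Gamma)\cong\QS(\Gamma_0)$. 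I expect the only delicate point to be the polarization bookkeeping: establishing the exact relation $\mu=\mu_0+D_w$ from the hemisphere definition and confirming the cancellation in the $\beta$-computation on the subdivided graph $\Gamma^\E$, where one must carefully track the vanishing of both $w_\Gamma$ and $\mu_{\can}$ on the exceptional vertices.
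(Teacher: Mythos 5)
Your proposal is correct and is essentially the paper's own argument: the paper simply exhibits the shift $(\E,D)\mapsto(\E,D+\sum_{v\in V(\Gamma)}w_\Gamma(v)v)$ from $\QS(\Gamma_0)$ to $\QS(\Gamma)$ and notes it preserves quasistability, which is exactly the inverse of your map $(\E,D)\mapsto(\E,D-D_w)$. Your verification of the polarization identity $\mu_{\can}=(\mu_0)_{\can}+D_w$ and the cancellation in $\beta$ just makes explicit what the paper leaves as a one-line observation.
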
 

\begin{proof}
It is enough to notice that a pseudo-divisor $(\E,D)$ on $\Gamma_0$ is $v_0$-quasistable  for some $v_0\in V(\Gamma)$, if and only if $(\E,D+\sum_{v\in V(\Gamma)}w_\Gamma(v)v)$ is a $v_0$-quasistable pseudo-divisor on $\Gamma$.
\end{proof}

By Proposition \ref{prop:pure-reduction}, we see that it is enough to prove Theorem \ref{thm:main1} for pure graphs. For the rest of this section, we will only consider pure graphs, and we use the word \emph{graph} for \emph{pure graphs}. 

\begin{Def}
A \emph{special pair} of a graph $\Gamma$ is a set $\{e_1,e_2\}$ of edges of $\Gamma$ such that
\begin{enumerate}
    \item[(1)] the edges $e_1,e_2$ are distinct parallel edges of $\Gamma$;
    \item[(2)] there are no parallel edges to $e_1$ and $e_2$ in $E(\Gamma)\setminus\{e_1,e_2\}$;
    \item[(3)] the graph $\Gamma$ remains connected after the removal of $e_1$ and $e_2$.
\end{enumerate} 
\end{Def}

 Condition (3) implies that a special pair of $\Gamma$ is contained in $\ND(\Gamma)$ (recall Equation \eqref{eq:ND}). From now on, we will fix:
\begin{enumerate}
    \item[(1)] two graphs $\Gamma$ and $\Gamma'$.
    \item[(2)] an isomorphism of posets $f\col \QS(\Gamma)\to \QS(\Gamma')$ with inverse $f^{-1}\col \QS(\Gamma')\to \QS(\Gamma)$. 
    \item[(3)] identifications $\QS(\Gamma)=\QS_{v_0}(\Gamma)$ and $\QS(\Gamma')=\QS_{v'_0}(\Gamma')$, for some $v_0\in V(\Gamma)$ and $v'_0\in V(\Gamma')$.
\end{enumerate}

Let $e_1,e_2$ be parallel edges of the graph $\Gamma$. Let $s$ and $t$ be the end-vertices of $e_1$ and $e_2$. Assume that there is a divisor $D\in \QS_{v_0}(\Gamma,\{e_1,e_2\})$ (the existence of one such $D$ is equivalent to the fact that $\Gamma$ remains connected after the removal of $e_1$ and $e_2$). By Proposition \ref{prop:image_parallel}, there are parallel edges $e_1',e_2'$ of the graph $\Gamma'$ and a divisor $D'\in\QS_{v'_0}(\Gamma',\{e_1',e_2'\})$ such that $f(\mathbf{R}_{e_1,e_2}(D)) = \mathbf{R}_{e_1',e_2'}(D')$. Let $s',t'$ be the end-vertices of  $e'_1$ and $e'_2$.

\begin{Lem}
\label{lem:P1_special_pair}
Keep the above notations. Assume that
\[
f(\{e_1\},D-v_{e_2}+s)=(\{e_1'\},D'-v_{e'_2}+s'),
\]
\[
f(\{e_1\},D-v_{e_2}+t))=(\{e_2'\},D'-v_{e'_1}+t').
\]
Then $\{e_1,e_2\}$ and $\{e_1',e_2'\}$ are special pairs of $\Gamma$ and $\Gamma'$, respectively.
\end{Lem}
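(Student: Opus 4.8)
The plan is to verify the three defining conditions of a special pair for $\{e_1,e_2\}$ and for $\{e_1',e_2'\}$. Conditions (1) and (3) come for free: $e_1,e_2$ (resp.\ $e_1',e_2'$) are distinct parallel edges by hypothesis (resp.\ by Proposition \ref{prop:image_parallel}), and connectedness after removal is exactly the existence of the divisors $D\in\QS_{v_0}(\Gamma,\{e_1,e_2\})$ and $D'\in\QS_{v_0'}(\Gamma',\{e_1',e_2'\})$. So the whole content is condition (2): that $e_1$ admits no parallel edge besides $e_2$, and $e_1'$ none besides $e_2'$. To detect this from the poset I would introduce the invariant $N(P,Q)$, the number of rank-$2$ elements of $\QS(\Gamma)$ lying above two given rank-$1$ elements $P,Q$. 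Since a ranked poset carries an intrinsic rank function (the length of a maximal chain up from a minimal element), the isomorphism $f$ preserves ranks, and hence $N(P,Q)=N(f(P),f(Q))$ for all $P,Q$.

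The key computation is for the two rank-$1$ elements $P_s:=(\{e_1\},D-v_{e_2}+s)$ and $P_t:=(\{e_1\},D-v_{e_2}+t)$, which keep the \emph{same} edge $e_1$. A common rank-$2$ upper bound must contain $e_1$, so has edge set $\{e_1,e\}$ for some $e$, and its specialization over $e$ must land on $s$ to give $P_s$ and on $t$ to give $P_t$; by Remark \ref{rem:elementary} this forces $e$ to be parallel to $e_1$ and the divisor to be $D-v_{e_2}+v_e$. Conversely, for every edge $e\neq e_1$ parallel to $e_1$ the pseudo-divisor $(\{e_1,e\},D-v_{e_2}+v_e)$ is quasistable: relabelling the two parallel edges gives an isomorphism $\Gamma^{\{e_1,e_2\}}\cong\Gamma^{\{e_1,e\}}$ fixing $v_0$ and the canonical polarization and carrying $D$ to $D-v_{e_2}+v_e$. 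Thus $N(P_s,P_t)$ equals the number of edges parallel to $e_1$ other than $e_1$, and condition (2) for $\{e_1,e_2\}$ is equivalent to $N(P_s,P_t)=1$. I expect this step — the exact description of the common upper bounds together with the quasistability check — to be the main obstacle.

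By contrast, when two rank-$1$ elements keep \emph{different} parallel edges there is a unique common rank-$2$ upper bound. By hypothesis $f(P_s)=(\{e_1'\},D'-v_{e_2'}+s')$ and $f(P_t)=(\{e_2'\},D'-v_{e_1'}+t')$ keep the distinct parallel edges $e_1',e_2'$; any common upper bound then has edge set $\{e_1',e_2'\}$, and matching the two elementary specializations (over $e_2'$ to $f(P_s)$, over $e_1'$ to $f(P_t)$) through Remark \ref{rem:elementary} forces the divisor to be $D'$. Hence $N(f(P_s),f(P_t))=1$, so $N(P_s,P_t)=1$, and the previous paragraph yields condition (2) for $\{e_1,e_2\}$; therefore $\{e_1,e_2\}$ is a special pair.

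For $\{e_1',e_2'\}$ I would run the same argument with $f^{-1}$, applied to $P_s':=(\{e_1'\},D'-v_{e_2'}+s')$ and $P_t':=(\{e_1'\},D'-v_{e_2'}+t')$. Here $f^{-1}(P_s')=P_s$ keeps $e_1$, and it remains to identify $f^{-1}(P_t')$. For this I would pin down the action of $f$ on the eight elements of $\mathbf{R}_{e_1,e_2}(D)$: the central rank-$0$ element (the unique one below all four rank-$1$ elements) is preserved by $f$, so the rank-$0$ element below $P_t$ other than the central one is matched with the rank-$0$ element below $A':=f(P_t)=(\{e_2'\},D'-v_{e_1'}+t')$ other than the central one; the two rank-$1$ elements above the former are $P_t$ and $(\{e_2\},D-v_{e_1}+t)$, and above the latter are $P_t'$ and $A'$, so $f(P_t)=A'$ forces $f(\{e_2\},D-v_{e_1}+t)=P_t'$. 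Thus $f^{-1}(P_t')=(\{e_2\},D-v_{e_1}+t)$ keeps $e_2\neq e_1$, so $f^{-1}(P_s')$ and $f^{-1}(P_t')$ keep different parallel edges of $\Gamma$, giving $N(f^{-1}(P_s'),f^{-1}(P_t'))=1$ as in the previous paragraph. Hence $N(P_s',P_t')=1$, and the same-edge computation, now carried out on $\Gamma'$, shows that $e_1'$ has only $e_2'$ as a parallel edge. Therefore $\{e_1',e_2'\}$ is a special pair as well.
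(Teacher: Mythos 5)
Your proof is correct and follows essentially the same strategy as the paper's: count the rank-$2$ common upper bounds of the two rank-$1$ elements $(\{e_1\},D-v_{e_2}+s)$ and $(\{e_1\},D-v_{e_2}+t)$, identify this count with the number of edges parallel to $e_1$ on one side and with $1$ on the other, and conclude via the poset isomorphism. The only (harmless) deviations are that you rule out a second upper bound over $\{e_1',e_2'\}$ by a direct divisor computation where the paper invokes Proposition \ref{prop:P0}, and that you spell out the ``argue similarly for $f^{-1}$'' step, which the paper leaves implicit.
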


\begin{proof} 
Let $e\neq e_1$ be an edge of $\Gamma$ parallel to $e_1$ and $e_2$. Using that $D\in \QS_{v_0}(\Gamma,\{e_1,e_2\})$, it is easy to see that $D-v_{e_2}+v_e$ is in $\QS_{v_0}(\Gamma, \{e,e_1\})$. We also have
\begin{align*}
(\{e,e_1\},D-v_{e_2}+v_e)&\geq (\{e_1\},D-v_{e_2}+s),\\
(\{e,e_1\},D-v_{e_2}+v_e)&\geq(\{e_1\},D - v_{e_2}+t).
\end{align*}
That means that the number of quasistable pseudo-divisors $(\E,\widetilde{D})$ on $\Gamma$ with $|\E|=2$ such that 
\[
(\E,\widetilde{D})\geq (\{e_1\},D-v_{e_2}+s)
\;\;\text{ and } \;\;
(\E,\widetilde{D}) \geq  (\{e_1\},D-v_{e_2}+t)
\]  
is at least the number of edges parallel to $e_1$ and different from $e_1$.

On the other hand, let us see that the number of pseudo-divisors $(\E',\widetilde{D}')$ on $\Gamma'$ with $|\E'|=2$ such that 
\[
(\E',\widetilde{D}')\geq (\{e_1'\},D'-v_{e_2'}+s')
\;\text{ and } \;
(\E',\widetilde{D}')\geq (\{e_1'\},D' -v_{e_2'}+t')
\]
is exactly one. Indeed, we have that 
$(\{e_1',e_2'\},D')$ satisfy this condition. Moreover, any other such pseudo-divisor must satisfy that $\E'=\{e_1',e_2'\}$. 

Assume that we have another such pseudo-divisor $(\{e_1',e_2'\}, \widetilde{D}')$. 
The poset 
\[
\{
(\{e_1',e_2'\}, \widetilde{D}'),\; (\{e_1',e_2'\}, D'),
( \{e_1'\}, D'-v_{e_2'}+ s'), \;
(\{e_2'\}, D'-v_{e_1'}+t')
\}
\] 
is a copy of the poset 
$\PosFix$
inside $\QS_{v_0'}(\Gamma')$. 
 Proposition \ref{prop:P0} characterizes such copies of $\PosFix$ inside $\QS_{v'_0}(\Gamma')$, and it is clear that we are in the case of Item (2).  Hence $D'-v_{e_2'}+s'-v_{e_1'}=D'-v_{e_1'}+t'-v_{e_2'}$, which is a contradiction since $s'\ne t'$.\par 

Since $f$ is an isomorphism of posets, we deduce  that there exists exactly one edge parallel to $e_1$, which must be $e_2$. Since $(\{e_1,e_2\},D)$ is a quasistable pseudodivisor, by Remark \ref{rem:AP_46}, we have that the removal of $e_1,e_2$ does not disconnect the graph, hence $e_1,e_2$ is a special pair of $\Gamma$. Arguing similarly for $f^{-1}$, we have that $e_1',e_2'$ is a special pair of $\Gamma'$.
\end{proof}

Recall the functions $\epsilon_\Gamma$ and $\delta_\Gamma$ defined in Equation \eqref{eq:eps-delta}.

\begin{Prop}\label{prop:alternativa}
Let $e$ be an edge of $\Gamma$. Assume that there are divisors $D_1$ and $D_2$ in $\QS_{v_0}(\Gamma,\{e\})$. Set $\{e_1'\}:=\epsilon_{\Gamma'}(f(\{e\},D_1))$ and $\{e_2'\}:=\epsilon_{\Gamma'}(f(\{e\},D_2))$. Then one of the two conditions holds.
\begin{enumerate}
    \item The edges $e_1'$ and $e_2'$ of $\Gamma'$ are equal.
    \item The edge $e$ belongs to a special pair $\{e,e_0\}$ of $\Gamma$ and $\{e_1',e_2'\}$ is a special pair of $\Gamma'$. 
\end{enumerate}
\end{Prop}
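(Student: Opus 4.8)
The plan is to connect $D_1$ and $D_2$ inside $\QS_{v_0}(\Gamma,\{e\})$, reduce to a single elementary comparison, and resolve that comparison by inspecting the local structure of the poset below a rank-$2$ pseudo-divisor. Concretely, I would first invoke Proposition~\ref{prop:uppper-connected}: since $D_1,D_2\in\QS_{v_0}(\Gamma,\{e\})$, the pseudo-divisors $(\{e\},D_1)$ and $(\{e\},D_2)$ are upper-connected, so there are rank-$1$ pseudo-divisors $(\{e\},F_0),\dots,(\{e\},F_m)$ with $F_0=D_1$, $F_m=D_2$, and rank-$2$ pseudo-divisors $(\{e,e_i\},G_i)$ such that $(\{e\},F_{i-1})$ and $(\{e\},F_i)$ both lie below $(\{e,e_i\},G_i)$. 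Setting $\{h_i\}:=\epsilon_{\Gamma'}(f(\{e\},F_i))$, we have $h_0=e_1'$ and $h_m=e_2'$. If $e_1'\neq e_2'$ there is an index $j$ with $h_{j-1}\neq h_j$, and it suffices to treat this single step. Writing $A:=(\{e\},F_{j-1})$, $B:=(\{e\},F_j)$ and $C:=(\{e,e_j\},G_j)$, the fact that $A\neq B$, both have exceptional set $\{e\}$, and both lie below $C$ forces, by Remark~\ref{rem:elementary}, that $A,B$ are the two elementary specializations of $C$ over $e_j$; in particular $e_j$ is not a loop and $A,B$ differ precisely at the two end-vertices of $e_j$.

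The heart of the argument is the structure of the set of pseudo-divisors below $C$. The key input is that \emph{every} elementary specialization of a quasistable pseudo-divisor is again quasistable: this is verified by comparing the function $\beta$ before and after the specialization, the point being that subdividing (or contracting) the relevant edge does not change the genus of the induced subgraphs, so the relevant values of $\beta$ coincide (this is exactly where the sign convention of the present paper is convenient). Consequently the interval below $C$ is the \emph{full} face poset, consisting of all four elementary specializations of $C$ together with all their further specializations. A direct computation of the divisors involved shows that this interval is the face poset of a quadrilateral (four minimal elements arranged in a $4$-cycle) when $e,e_j$ are \emph{not} parallel, and is isomorphic to $\PosFixx=\mathbf{R}_{e,e_j}(G_j)$ (three minimal elements) when $e,e_j$ \emph{are} parallel; these two posets are not isomorphic. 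Now suppose, for contradiction, that $e,e_j$ are not parallel. Then the interval below $C$ is a quadrilateral, and since $f$ restricts to an isomorphism onto the interval below $f(C)=(\{h_{j-1},h_j\},\cdot)$, that interval is again a quadrilateral. In such a $4$-cycle the two specializations over a fixed edge form the pair of \emph{opposite} elements, i.e.\ the unique pair with no common minimal element; as $A,B$ are the two specializations of $C$ over $e_j$, the pair $\{A,B\}$ is opposite, hence so is $\{f(A),f(B)\}$, and the two opposite elements of this quadrilateral share the same exceptional edge. This contradicts $h_{j-1}\neq h_j$, so $e$ and $e_j$ must be parallel.

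With $e,e_j$ parallel, Proposition~\ref{prop:image_parallel} applies to $\mathbf{R}_{e,e_j}(G_j)$ and produces parallel edges $f_1,f_2$ of $\Gamma'$ with $f(\mathbf{R}_{e,e_j}(G_j))=\mathbf{R}_{f_1,f_2}(G')$, where $\{f_1,f_2\}=\{h_{j-1},h_j\}$. Since $A$ and $B$ are the two rank-$1$ elements of $\mathbf{R}_{e,e_j}(G_j)$ with exceptional set $\{e\}$ and their images have distinct exceptional edges, Lemma~\ref{lem:P1_special_pair} shows that $\{e,e_j\}$ and $\{f_1,f_2\}$ are special pairs of $\Gamma$ and $\Gamma'$; in particular $e$ belongs to a special pair, the first half of alternative~(2). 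It remains to identify $\{e_1',e_2'\}=\{h_0,h_m\}$ with a special pair. Since $\{e,e_j\}$ is special, $e_j$ is the \emph{unique} edge parallel to $e$; applying the single-step dichotomy to every step of the chain shows that $h_{i-1}=h_i$ unless the step is taken across a rank-$2$ element with exceptional set $\{e,e_j\}$, in which case Lemma~\ref{lem:P1_special_pair} again forces the two image edges into a special pair of $\Gamma'$. A consistency argument---based on the uniqueness of $e_j$ and on the fact that all rank-$2$ pseudo-divisors with exceptional set $\{e,e_j\}$ are mutually upper-connected, so that their images retain the same exceptional pair---shows that this special pair is always the same $\{f_1,f_2\}$. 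Hence $h_i\in\{f_1,f_2\}$ for all $i$, so $\{e_1',e_2'\}\subset\{f_1,f_2\}$, and since $e_1'\neq e_2'$ we conclude $\{e_1',e_2'\}=\{f_1,f_2\}$, a special pair, completing alternative~(2).

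I expect the principal difficulty to be the structural claim underlying the second paragraph: that the interval below a rank-$2$ pseudo-divisor is the complete face poset, and that it is a quadrilateral or $\PosFixx$ according to whether its two exceptional edges are non-parallel or parallel. Proving this cleanly requires the preservation of quasistability under elementary specialization together with a careful enumeration of the minimal elements. The propagation in the last paragraph is the other delicate point, where one must exclude the possibility that the target special pair of $\Gamma'$ varies along the chain.
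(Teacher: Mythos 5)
Your overall strategy is the same as the paper's: upper-connect $D_1$ and $D_2$, reduce to a single rank-$2$ element $C$ lying over two rank-$1$ elements $A\neq B$ with exceptional set $\{e\}$, show the two edges in $\epsilon_\Gamma(C)$ are parallel, and then invoke Proposition \ref{prop:image_parallel} and Lemma \ref{lem:P1_special_pair}. Your quadrilateral-versus-$\PosFixx$ analysis of the down-set of $C$ is a legitimate repackaging of what the paper does via Lemma \ref{lem:parallel_edges} (the paper instead produces a common rank-$0$ lower bound of $f(A)$ and $f(B)$ on the $\Gamma'$ side, pulls it back, and applies that lemma). However, there are two concrete gaps. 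First, your dichotomy is incomplete when $e$ is a loop: a loop is never parallel to $e_j$, yet the down-set of $C$ then has only one elementary specialization over $e$ and two minimal elements, so it is neither a quadrilateral nor $\PosFixx$, and your opposite-pair argument does not apply as written. Loops can certainly occur in the exceptional set of a quasistable pseudo-divisor, so this case must be treated (one can still derive a contradiction with $h_{j-1}\neq h_j$ by noting that $A$ and $B$ have no common minimal element while any two rank-$1$ elements with distinct exceptional sets in the $6$-element image down-set do share one; the paper's Lemma \ref{lem:parallel_edges} absorbs this case automatically).

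Second, and more importantly, your application of Lemma \ref{lem:P1_special_pair} does not verify its hypothesis. Knowing that $f(A)$ and $f(B)$ land in $\mathbf{R}_{f_1,f_2}(G')$ with distinct exceptional edges only tells you $f(A)=(\{f_1\},G'-v_{f_2}+x')$ and $f(B)=(\{f_2\},G'-v_{f_1}+y')$ for some end-vertices $x',y'$ of the pair; the lemma requires $x'\neq y'$, and its conclusion genuinely depends on this (if $x'=y'$ there are two rank-$2$ elements above both images, and the counting argument in the lemma's proof breaks down). The paper closes this with a separate step: if $x'=y'$ then $f(A)$ and $f(B)$ would have two common rank-$0$ lower bounds in $\mathbf{R}_{f_1,f_2}(G')$, whereas $A$ and $B$ have exactly one in $\mathbf{R}_{e,e_j}(G_j)$. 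You need this (or an equivalent) argument before citing the lemma. Finally, in the propagation step your appeal to mutual upper-connectedness of all rank-$2$ elements with exceptional set $\{e,e_j\}$ is both unjustified by the results available at that point and unnecessary: once each jump $h_{i-1}\neq h_i$ is known to produce a special pair of $\Gamma'$ containing $h_{i-1}$, the uniqueness of the parallel partner inside a special pair of $\Gamma'$ already forces every $h_i$ to stay inside a single special pair, which gives $\{e_1',e_2'\}=\{f_1,f_2\}$.
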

\begin{proof}
The result is clear if $D_1=D_2$, so we can assume that $D_1\neq D_2$. By Proposition \ref{prop:uppper-connected}, it is sufficient to prove the result when there exists a pseudo-divisor $(\{e,e_0\}, D)$ that specializes to both $(\{e\},D_1)$ and $(\{e\}, D_2)$. By Remark \ref{rem:elementary}, the edge $e$ is not a loop and, denoting by $s$ and $t$ the end-vertices of $e$, we have $D_1=D-v_e+s$ and $D_2=D-v_e+t$. Set $D_i':=\delta_{\Gamma'}(f(\{e\},D_i))$ for $i=1,2$.\par 

Assume that $e_1'\neq e_2'$. For $i=1,2$, we have that 
\[
f(\{e,e_0\},D)\geq f(\{e\},D_i)=(\{e_i'\},D_i'),
\]
which implies that $f(\{e,e_0\},D)=(\{e_1',e_2'\},D')$, for some divisor $D'\in \QS_{v'_0}(\Gamma',\{e'_1,e'_2\})$. \par

Again by Remark \ref{rem:elementary}, there is an end-vertex $s_i'$ of $e_i'$ for $i=1,2$, such that $D_1'=D'-v_{e_2'}+s_2'$ and $D_2'=D'-v_{e_1'}+s_1'$. If we set $\wt{D}' := D'-v_{e_1'}-v_{e_2'}+s_1'+s_2'$, then  $(\emptyset, \wt{D}')\leq (\{e_i'\},D_i')$ for $i=1,2$. Set $(\emptyset,\wt{D}):=f^{-1}(\emptyset, \wt{D}')$. Therefore, for $i=1,2$, we have
\[
(\emptyset, \wt{D})\leq (\{e\},D_i)\leq (\{e,e_0\},D).
\]
We see that $(\{e,e_0\},D) $ satisfies the hypotheses of Lemma \ref{lem:parallel_edges}, hence the edges $e$ and $e_0$ of $\Gamma$ are parallel. \par 

Now consider the image of the poset $\mathbf R_{e,e_0}(D)\subset \QS_{v_0}(\Gamma)$ via the isomorphism $f$. By Proposition \ref{prop:image_parallel}, we have $f(\mathbf R_{e,e_0}(D))=\mathbf R_{e_1',e_2'}(D')$ and the edges $e'_1,e'_2$ are parallel, with 
\[
f(\{e\},D-v_{e_0}+s)=(\{e_1'\},D'-v_{e_2'}+s_2')
\]
\[
f(\{e\},D-v_{e_0}+t)=(\{e_2'\},D'-v_{e_1'}+s_1').
\]

By contradiction, assume that $s':=s_1'=s_2'$, and let $t'$ be the other end-vertex of $e'_1$ and $e'_2$. Then the two pseudo-divisors $(\emptyset, D'-v_{e_1'}-v_{e_2'}+2s')$ and $(\emptyset, D'-v_{e_1'}-v_{e_2'}+s'+t')$ of $\mathbf{R}_{e_1',e_2'}(D')$ would be smaller then both $(\{e_1'\},D_1')$ and $(\{e_2'\},D_2')$. 
%This is not possible, since given two elements of rank $1$ in the poset $\PosFixx$ there is exactly one element of rank $0$ in $\PosFixx$ which is less than both elements of rank $2$.
On the other hand, there is only one element of $\mathbf{R}_{e,e_0}(D)$ smaller then $(\{e\},D_1)$ and $(\{e\},D_2)$, which is $(\emptyset, D-v_{e}-v_{e_0}+s+t)$. We have a contradiction, which
 proves that $s_1'\neq s_2'$. So  the end-vertices of $e'_1$ and $e'_2$ are $s':=s'_2$ and $t':=s'_1$. 
 We see that the hypotheses of Lemma \ref{lem:P1_special_pair} are satisfied, hence the pairs $e,e_0$ and $e_1',e_2'$ are special pairs of $\Gamma$ and $\Gamma'$, respectively, and we are done.
\end{proof}

\begin{Cor}
\label{cor:special}
Let $e$ be an edge of $\Gamma$. The following conditions hold.
\begin{enumerate}
    \item If $e$ does not belong to a special pair of $\Gamma$, then $\epsilon_{\Gamma'}(f(\{e\},D))$ is independent of the choice of the divisor $D\in \QS_{v_0}(\Gamma,\{e\})$.
    \item If $e$ belongs to a special pair $\{e,e_0\}$ of $\Gamma$, then  $\epsilon_{\Gamma'}(f(\{e,e_0\},D))$ is independent of the choice of the divisor $D\in \QS_{v_0}(\Gamma,\{e,e_0\})$.
\end{enumerate}
\end{Cor}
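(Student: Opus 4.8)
The plan is to treat the two items separately, with item (1) being an immediate consequence of Proposition \ref{prop:alternativa} and item (2) requiring an extra argument built on upper-connectedness together with an intermediate claim. For item (1), suppose $e$ lies in no special pair of $\Gamma$ and take any two divisors $D_1,D_2\in\QS_{v_0}(\Gamma,\{e\})$. In Proposition \ref{prop:alternativa} alternative (2) can never occur, since it forces $e$ to belong to a special pair; hence alternative (1) holds, i.e. $\epsilon_{\Gamma'}(f(\{e\},D_1))=\epsilon_{\Gamma'}(f(\{e\},D_2))$. As $D_1,D_2$ are arbitrary, $\epsilon_{\Gamma'}(f(\{e\},D))$ is independent of $D$. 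For item (2) the key intermediate statement I would isolate is the following \emph{Claim}: if $\{e,e_0\}$ is a special pair of $\Gamma$, then for every $D\in\QS_{v_0}(\Gamma,\{e,e_0\})$ the set $\epsilon_{\Gamma'}(f(\{e,e_0\},D))$ is a special pair of $\Gamma'$.

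Granting the Claim, I would conclude item (2) as follows. Fix $D,\tilde D\in\QS_{v_0}(\Gamma,\{e,e_0\})$. By Proposition \ref{prop:uppper-connected} the pseudo-divisors $(\{e,e_0\},D)$ and $(\{e,e_0\},\tilde D)$ are upper-connected, so by transitivity it suffices to treat the case where both specialize from a common rank-$3$ pseudo-divisor $(\{e,e_0,\hat e\},\hat D)$ with $\hat e\notin\{e,e_0\}$. Applying $f$ and comparing ranks, both $\epsilon_{\Gamma'}(f(\{e,e_0\},D))$ and $\epsilon_{\Gamma'}(f(\{e,e_0\},\tilde D))$ are $2$-element subsets of the $3$-element set $\epsilon_{\Gamma'}(f(\{e,e_0,\hat e\},\hat D))$, and by the Claim both are special pairs. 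But two distinct special pairs cannot sit inside a common set of three edges: if $\{x',y'\}$ and $\{x',z'\}$ were both special with $y'\neq z'$, then $z'$ would be an edge parallel to $x'$ distinct from $x',y'$, contradicting condition (2) in the definition of special pair. Hence the two sets coincide, which gives item (2).

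It remains to prove the Claim, and this is the step I expect to be the main obstacle. By Proposition \ref{prop:image_parallel} we have $f(\mathbf R_{e,e_0}(D))=\mathbf R_{p',q'}(D^\circ)$ for parallel edges $p',q'$ of $\Gamma'$ and some $D^\circ\in\QS_{v'_0}(\Gamma',\{p',q'\})$, so $\epsilon_{\Gamma'}(f(\{e,e_0\},D))=\{p',q'\}$ with $p',q'$ parallel; the existence of $D^\circ$ also forces $\Gamma'$ to remain connected after the removal of $p',q'$. Thus I only need to rule out a third edge of $\Gamma'$ parallel to $p',q'$. The idea is to count the rank-$2$ quasistable pseudo-divisors lying above the two rank-$1$ elements of $\mathbf R_{e,e_0}(D)$ with edge set $\{e\}$, namely $(\{e\},D-v_{e_0}+s)$ and $(\{e\},D-v_{e_0}+t)$, where $s,t$ are the end-vertices of $e$. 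By Lemma \ref{lem:parallel_edges} any such pseudo-divisor has edge set $\{e,y\}$ with $y$ parallel to $e$, and by the uniqueness recorded in Remark \ref{rem:elementary} there is exactly one for each such $y$; since $\{e,e_0\}$ is special, this number is $1$. Because $f$ is a poset isomorphism, the analogous number of rank-$2$ quasistable pseudo-divisors above $f(\{e\},D-v_{e_0}+s)$ and $f(\{e\},D-v_{e_0}+t)$ is also $1$.

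The delicate point is now a dichotomy: the isomorphism $f$ may or may not send these two images to elements with a common edge set, and both possibilities must be handled. If the two images share an edge set, it is one of $\{p'\},\{q'\}$, say $\{p'\}$, and they are the two rank-$1$ elements of $\mathbf R_{p',q'}(D^\circ)$ with that edge set; repeating the count in $\Gamma'$ (each edge $y'$ parallel to $p'$ contributing the pseudo-divisor $D^\circ-v_{q'}+v_{y'}$ over $\{p',y'\}$, exactly as in the proof of Lemma \ref{lem:P1_special_pair}) shows that $p'$ has exactly one parallel edge, so $\{p',q'\}$ is special. If instead the two images have different edge sets $\{p'\}$ and $\{q'\}$, then, after possibly interchanging $p'\leftrightarrow q'$ and the roles of $s,t$, the hypotheses of Lemma \ref{lem:P1_special_pair} are met, and that lemma yields directly that $\{p',q'\}$ is a special pair. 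In either case the Claim holds, completing the argument; the reason the case split is unavoidable is precisely that $f$ need not respect the partition of the rank-$1$ elements of $\mathbf R_{e,e_0}(D)$ by edge set, so one cannot reduce uniformly to a single one of the two mechanisms.
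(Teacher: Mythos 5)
Your item (1) is exactly the paper's argument. For item (2), however, the paper's proof is a one\-line deduction from Proposition \ref{prop:alternativa}, whereas you build a genuinely different and much heavier route: upper\-connectedness at rank $3$, a Claim that $\epsilon_{\Gamma'}(f(\{e,e_0\},D))$ is always a special pair, and a count of rank\-$2$ upper bounds. The final assembly (two special pairs inside a common $3$\-element edge set must coincide because distinct special pairs are disjoint) is correct, and your Case A counting is sound and genuinely needed on your route, since Proposition \ref{prop:alternativa} gives no information when the two images share an edge set.

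There is one unjustified step, in Case B of your Claim. The hypotheses of Lemma \ref{lem:P1_special_pair} require the two images to be $(\{p'\},D^{\circ}-v_{q'}+s')$ and $(\{q'\},D^{\circ}-v_{p'}+t')$ with $s'\neq t'$; a priori both images could be obtained by adding the \emph{same} end\-vertex, and no interchange of $p'\leftrightarrow q'$ or of $s,t$ repairs that. Ruling out this configuration is precisely what the paper's proof of Proposition \ref{prop:alternativa} does by counting the elements of $\mathbf{R}_{e,e_0}(D)$ lying below both rank\-$1$ elements (one on the source side versus two on the target side); you supply no such argument, so as written this is a gap. It is, fortunately, instantly closable without Lemma \ref{lem:P1_special_pair}: in Case B the divisors $D-v_{e_0}+s$ and $D-v_{e_0}+t$ both lie in $\QS_{v_0}(\Gamma,\{e\})$ and their images have distinct edge sets $\{p'\}\neq\{q'\}$, so alternative (1) of Proposition \ref{prop:alternativa} fails and alternative (2) already asserts that $\{p',q'\}$ is a special pair of $\Gamma'$. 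With that substitution your proof is complete, though far longer than the intended derivation, which pins down $\epsilon_{\Gamma'}(f(\{e,e_0\},D))$ directly from the dichotomy of Proposition \ref{prop:alternativa} applied to $e$ and to $e_0$, using that an edge of $\Gamma'$ belongs to at most one special pair.
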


\begin{proof}
The result readily follows from Proposition \ref{prop:alternativa}.
\end{proof}

\begin{Def}
\label{def:sim}
Let $\Gamma$ be a graph. We say that two subsets $\E_1$ and $\E_2$ of $E(\Gamma)$ are \emph{equivalent} if there are special pairs $\{e_{1,1}, e_{1,2}\},\dots,\{e_{k,1},e_{k,2}\}$ of $\Gamma$ such that  $\{e_{1,i},\ldots, e_{k,i}\}\subset \E_i$ for $i=1,2$ and
\[
    \E_1\setminus \{e_{1,1},\dots,e_{k,1}\}=
    \E_2\setminus \{e_{1,2},\dots,e_{k,2}\}.
    \]
We say that two pseudo-divisors $(\E_1,D_1)$ and $(\E_2,D_2)$ of $\Gamma$ are \emph{equivalent}, and we write $(\E_1,D_1)\sim (\E_2,D_2)$, if the following conditions hold
\begin{enumerate}
    \item[(1)] we have $D_1(v)=D_2(v)$ for every $v\in V(\Gamma)$.
    \item[(2)] the subsets $\E_1$ and $\E_2$ of $E(\Gamma)$ are equivalent.
\end{enumerate} 
\end{Def}

\begin{Rem}\label{rem:unique-equiv}
    Given a pseudo-divisor $(\E_1,D_1)$ of a graph $\Gamma$ and a subset $\E_2\subset E(\Gamma)$ such that $\E_2$ is equivalent to $\E_1$, then there is a unique divisor $D_2$ on $\Gamma^{\E_2}$ such that $(\E_2,D_2)$ is equivalent to $(\E_1,D_1)$ (the divisor $D_2$ is defined as $D_2(v):=D_1(v)$ for every $v\in V(\Gamma)$ and $D_2(v_e)=1$ for every $e\in \E_2$). In particular, if $(\E_1,D_1)\sim (\E_2,D_2)$  and $\E_1=\E_2$, then $(\E_1,D_1)=(\E_2,D_2)$.
\end{Rem}
\begin{Rem}
\label{rem:equivalent_cover}
Let $(\E_1,D_1)\geq (\wt{\E}_1,\wt{D}_1)$ be a specialization in $\QS_{v_0}(\Gamma)$.  If $(\E_2,D_2)$ and $(\wt{\E}_2,\wt{D}_2)$ are two pseudo-divisors on $\Gamma$ such that $\wt{\E}_2\subset \E_2$, with $(\E_2,D_2)\sim (\E_1,D_1)$ and $(\wt{\E}_2,\wt{D}_2)\sim (\wt{\E_1},\wt{D}_1)$, then $(\E_2,D_2)\geq (\wt{\E}_2,\wt{D}_2)$.
\end{Rem}

Recall the definition of the set  $\ND(\Gamma)$ in Equation \eqref{eq:ND}.

\begin{Prop}\label{prop:fE} 
The isomorphisms $f$ and $f^{-1}$ take equivalent pseudo-divisors to equivalent pseudo-divisors. Moreover, $f$ induces 
 a weakly cyclic equivalence $f_E\col \ND(\Gamma)\to \ND(\Gamma')$ such that for every pseudo-divisor $(\E,D)\in \QS_{v_0}(\Gamma)$ there exists a unique divisor $D'\in \QS_{v_0'}(\Gamma', f_E(\E))$ for which  $f(\E,D)\sim (f_E(\E), D')$. 
\end{Prop}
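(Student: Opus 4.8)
The plan is to prove Proposition \ref{prop:fE} in three stages: first construct the edge map $f_E$ on $\ND(\Gamma)$ using Corollary \ref{cor:special}, then show $f$ respects the equivalence relation $\sim$, and finally verify that $f_E$ is a weakly cyclic equivalence with the claimed uniqueness property.

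First I would define $f_E$ on each nondisconnecting edge $e\in\ND(\Gamma)$ by looking at $\epsilon_{\Gamma'}(f(\{e\},D))$ for a divisor $D\in\QS_{v_0}(\Gamma,\{e\})$ (which exists precisely because $e$ is nondisconnecting). By Corollary \ref{cor:special}(1), if $e$ does not lie in a special pair this is independent of $D$, so I set $f_E(e)$ to be this common edge. If $e$ lies in a special pair $\{e,e_0\}$, then Corollary \ref{cor:special}(2) tells me $\epsilon_{\Gamma'}(f(\{e,e_0\},D))=\{e_1',e_2'\}$ is a well-defined special pair of $\Gamma'$ independent of $D$; here Proposition \ref{prop:alternativa} shows the two rank-one images $e_1',e_2'$ either coincide or form a special pair, and I must make a consistent choice to define $f_E$ on each of $e,e_0$ (sending the pair $\{e,e_0\}$ to the pair $\{e_1',e_2'\}$). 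The main technical point is that this assignment is a well-defined bijection $\ND(\Gamma)\to\ND(\Gamma')$: I would run the symmetric argument for $f^{-1}$ to produce $f^{-1}_E$, and check it is inverse to $f_E$, using that special pairs map to special pairs (Lemma \ref{lem:P1_special_pair}) so the two constructions are compatible.

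Next I would show $f$ and $f^{-1}$ preserve $\sim$. Since equivalence of pseudo-divisors is generated by swapping one edge of a special pair for its partner while keeping the divisor on $V(\Gamma)$ fixed, it suffices to treat a single such swap, i.e. pseudo-divisors $(\E_1,D_1)\sim(\E_2,D_2)$ where $\E_2=(\E_1\setminus\{e_{i,1}\})\cup\{e_{i,2}\}$ for one special pair $\{e_{i,1},e_{i,2}\}$. By induction on the number of special-pair swaps, I reduce to understanding how $f$ behaves on the poset $\mathbf R_{e_1,e_2}(D)$: Proposition \ref{prop:image_parallel} guarantees $f(\mathbf R_{e_1,e_2}(D))=\mathbf R_{e_1',e_2'}(D')$, and in the special-pair case Lemma \ref{lem:P1_special_pair} pins down which rank-one elements go where. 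Comparing the two divisors on $V(\Gamma')$ assigned to $(\E_1,D_1)$ and $(\E_2,D_2)$ under $f$, I read off from the explicit description of $\mathbf R_{e_1',e_2'}(D')$ that they agree on all vertices and differ only by interchanging the exceptional vertex $v_{e_1'}$ for $v_{e_2'}$, which is exactly $f(\E_1,D_1)\sim f(\E_2,D_2)$.

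Finally I would assemble these into the stated conclusion. That $f_E$ is a weakly cyclic equivalence: by Remark \ref{rem:weak_cyclic_equiv_trees} it is enough to show $f_E$ carries maximally nondisconnecting subsets of $\Gamma$ to those of $\Gamma'$. The maximal elements of $\QS_{v_0}(\Gamma)$ are exactly the pseudo-divisors supported on maximally nondisconnecting subsets (Remark \ref{rem:AP_46}(6)), and $f$ preserves maximality, so applying $\epsilon_{\Gamma'}$ and the already-established compatibility of $f$ with $\sim$ and with $f_E$ shows maximally nondisconnecting subsets map to maximally nondisconnecting subsets. For the last clause, given any $(\E,D)\in\QS_{v_0}(\Gamma)$, the image $f(\E,D)$ has $\epsilon_{\Gamma'}(f(\E,D))$ equivalent to $f_E(\E)$ by the edge-by-edge construction, so by Remark \ref{rem:unique-equiv} there is a unique $D'\in\QS_{v_0'}(\Gamma',f_E(\E))$ with $f(\E,D)\sim(f_E(\E),D')$. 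I expect the main obstacle to be the well-definedness and bijectivity of $f_E$ in the presence of special pairs: I must make the choice of target edges coherent across all of $\ND(\Gamma)$ and verify that the $f$-construction and the $f^{-1}$-construction genuinely invert one another, rather than just locally matching up on individual copies of $\mathbf R$.
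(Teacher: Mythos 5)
Your first and third stages track the paper closely: $f_E$ is defined edge-by-edge via Corollary \ref{cor:special}, the weakly cyclic equivalence is obtained from Remark \ref{rem:weak_cyclic_equiv_trees} by showing maximally nondisconnecting subsets are preserved (together with the spanning-tree count to handle $f_E^{-1}$), and the uniqueness of $D'$ comes from Remark \ref{rem:unique-equiv} once one knows $\epsilon_{\Gamma'}(f(\E,D))$ is equivalent to $f_E(\E)$. The genuine gap is in your second stage, where you prove that $f$ preserves $\sim$. After reducing to a single swap $\E_1=\E\cup\{e_1\}$, $\E_2=\E\cup\{e_2\}$ for a special pair $\{e_1,e_2\}$, you propose to place $(\E_1,D_1)$ and $(\E_2,D_2)$ inside a copy of $\mathbf{R}_{e_1,e_2}(D)$ and invoke Proposition \ref{prop:image_parallel}. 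This fails for two reasons. First, Proposition \ref{prop:image_parallel} concerns injective morphisms of \emph{ranked} posets $\PosFixx\to\QS_{v_0}(\Gamma)$, so its top element must have rank $2$; your configuration sits at rank $|\E|+1$, and no ``shifted'' version of that proposition is available. Second, and more seriously, the required top element $(\E\cup\{e_1,e_2\},\cdot)$ need not exist: $\E\cup\{e_1\}$ nondisconnecting does not imply $\E\cup\{e_1,e_2\}$ nondisconnecting (take $s,t$ joined by the special pair $e_1,e_2$ and by a length-two path $a,b$ through $u$, and set $\E=\{a\}$), in which case $\QS_{v_0}(\Gamma,\E\cup\{e_1,e_2\})=\emptyset$ and there is no copy of $\mathbf R$ containing both $(\E_1,D_1)$ and $(\E_2,D_2)$.

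The paper goes \emph{downward} instead: the four elements $(\E_1,D_1)$, $(\E_2,D_2)$, $(\E,D_s)$, $(\E,D_t)$, where $D_s$ and $D_t$ are the two common elementary specializations, always form a copy of $\PosFix$, so Proposition \ref{prop:P0} (which needs only preservation of cover relations, not of rank) leaves exactly two possible shapes for the image. Excluding the wrong shape, item (2) of Proposition \ref{prop:P0}, is the crux, and it is done by induction on the rank $|\E_1|$: in configuration (2) the two images of $(\E,D_s)$ and $(\E,D_t)$ would be equivalent pseudo-divisors of $\Gamma'$ of rank $|\E|$, so the inductive hypothesis applied to $f^{-1}$ would force $(\E,D_s)\sim(\E,D_t)$, contradicting $D_s(s)=D_t(s)+1$. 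Your proposal contains neither this downward configuration nor the induction on rank, so the step that actually pins down the image of the swapped pair is missing.
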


\begin{proof}
By Corollary \ref{cor:special}, we can define a bijection $f_E\col \ND(\Gamma)\to \ND(\Gamma')$ as follows.
\begin{enumerate}
    \item For each edge $e\in \ND(\Gamma)$ that does not belong to a special pair of $\Gamma$, we set $f_E(e)$ to be the unique edge of $\Gamma'$ satisfying $\{f_E(e)\} =\epsilon_{\Gamma'}(f(\{e\},D))$ for every divisor $D\in \QS_{v_0}(\Gamma,\{e\})$. (Notice that $\QS_{v_0}(\Gamma,\{e\})$ is not empty, since $e$ is not a bridge of $\Gamma$). 
    \item For each special pair $\{e_1,e_2\}$ of $\Gamma$, we let $f_E(e_1), f_E(e_2)$ to be the edges of $\Gamma'$ such that 
    \[
\{f_E(e_1),f_E(e_2)\} = \epsilon_{\Gamma'}(f(\{e_1,e_2\},D)),
    \] 
    for every divisor $D\in \QS_{v_0}(\Gamma,\{e_1,e_2\})$. (Here there is a choice to be made: a different choice would switch the values $f_E(e_1)$ and $f_E(e_2)$.) Notice that $\{f_E(e_1),f_E(e_2)\}$ is a special pair and hence it is contained in $\ND(\Gamma)$. 
\end{enumerate}

Let us prove that for every $(\E,D)\in \QS_{v_0}(\Gamma)$, there is a unique divisor $D'\in \QS_{v_0'}(\Gamma', f_E(\E))$ such that $f(\E,D)\sim (f_E(\E), D')$. By Remark \ref{rem:unique-equiv}, it suffices to prove that $\epsilon_{\Gamma'}(f(\E,D))$ and $f_E(\E)$ are equivalent subsets of $E(\Gamma')$. Set $\E':=\epsilon_{\Gamma'}(f(\E,D))$. For each subset $\E_0\subset \E$, there exists a divisor $D_0\in \QS_{v_0}(\Gamma,\E_0)$ such that $(\E_0,D_0)\leq (\E,D)$ (see Remark \ref{rem:elementary}). Moreover, we have $f(\E_0,D_0)\leq f(\E,D)$, and hence $\epsilon_{\Gamma'}(f(\E_0,D_0))\subset \epsilon_{\Gamma'}(f(\E,D))=\E'$. Thus the following conditions hold
\begin{enumerate}
    \item if an edge $e\in \E$ does not belong to a special pair, then $f_E(e)\in \E'$.
    \item if $\{e_1,e_2\}\subset \E$ is a special pair, then $f_E(\{e_1,e_2\})\subset \E'$.
    \item if an edge $e_1\in \E$ belongs to a special pair $\{e_1,e_2\}$ with $e_2$ not in $\E$, then either $f_E(e_1)\in \E'$ or $f_E(e_2)\in \E'$, but  $\{f_E(e_1),f_E(e_2)\}\not\subset\E'$. Moreover, $\{f_E(e_1),f_E(e_2)\}$ is a special pair of $\Gamma'$.
\end{enumerate} 
This concludes the proof that $\epsilon_{\Gamma'}(f(\E,D))$ and $f_E(\E)$ are equivalent.

Next, we prove that $f_E$ is a weakly cyclic equivalence. By Remark \ref{rem:weak_cyclic_equiv_trees} it is enough to prove that $f_E$ and $f_E^{-1}$ take maximally nondisconnecting subsets to maximally nondisconnecting subsets. Let $\E\subset E(\Gamma)$ be a maximally nondisconnecting subset. Then there exists exactly one divisor $D\in \QS_{v_0}(\Gamma,\E)$ (see Remark \ref{rem:AP_46}). We also have that $(\E, D)$ is maximal in $\QS_{v_0}(\Gamma)$. We set $(\E',D'):=f(\E,D)$. Then $(\E',D')$ is maximal in $\QS_{v_0'}(\Gamma')$, which implies that $\E'$ is a maximally nondisconnecting subset (see Remark \ref{rem:AP_46}). Since $f_E(\E)$ and $\E'$ are equivalent, we have that $f_E(\E)$ is also a maximally nondisconnecting subset.
The number of spanning trees of $\Gamma$ and $\Gamma'$ is equal to the number of maximal elements of $\QS_{v_0}(\Gamma)$ and $\QS_{v'_0}(\Gamma')$, respectively, hence they are equal, because $f$ is an isomorphism. Since the number of spanning trees of $\Gamma$ and $\Gamma'$ are the same, it follows that $f_E^{-1}$ also takes maximally nondisconnecting subsets to maximally nondisconnecting subsets. This concludes the proof that $f_E$ is a weakly cyclic equivalence.

Now we will prove that $f$ and $f^{-1}$ take equivalent pseudo-divisors to equivalent pseudo-divisors. We proceed by induction on the rank of a pseudo-divisor. Let $(\E_1,D_1)$ and $(\E_2,D_2)$ be two equivalent pseudo-divisors of rank $k$ on $\Gamma$. 
 If $k=0$, that is, if $|\E_1|=|\E_2|=0$,  then $D_1=D_2$ and hence $f(\E_1,D_1)=f(\E_2,D_2)$ and we are done. The same reasoning holds for $f^{-1}$.

  By the induction hypothesis, $f$ and $f^{-1}$ send equivalent pseudo-divisors of rank strictly less than $k$ to equivalent pseudo-divisors. We will prove the induction step only for $f$. The reasoning  for $f^{-1}$ is similar. It is enough to prove the result for $\E_1=\E\cup\{e_1\}$ and $\E_2=\E\cup\{e_2\}$, for some $\E\subset E(\Gamma)$ and for some special pair $\{e_1,e_2\}$ of $\Gamma$ such that $\E\cap\{e_1,e_2\}=\emptyset$. Let $s$ and $t$ be the end-vertices of $e_1$ and $e_2$. Define 
\begin{align*}
    D_s :=& D_1 - v_{e_1}+s=D_2-v_{e_2}+s\\
    D_t :=& D_1 - v_{e_1}+t=D_2-v_{e_2}+t.
\end{align*}
Notice that we have
\begin{equation}\label{eq:compare}
D_s(s)=D_t(s)+1 \quad\text{ and } \quad D_s(t)=D_t(t)-1.
\end{equation}
By Remark \ref{rem:elementary}, we have that 
\begin{align*}
(\E,D_s)\leq (\E_1,D_1), \; \; & (\E,D_s)\le(\E_2,D_2),\\
(\E,D_t)\leq (\E_1,D_1), \; \; &  (\E,D_t)\leq (\E_2,D_2).
\end{align*}
In particular, the set $\{(\E_1,D_1),(\E_2,D_2),(\E,D_s),(\E,D_t)\}$ is a poset isomorphic to the poset $\PosFix$ in Definition \ref{def:fix}. Therefore, the image of this set via $f$ must be one of the images described in Proposition \ref{prop:P0}. Set $(\E_1',D_1')=f(\E_1,D_1)$ and $(\E_2',D_2')=f(\E_2,D_2)$.

By contradiction, assume that we are in the situation described in item (2) of Proposition \ref{prop:P0}. 
This implies that there exist parallel edges $\{e_1',e_2'\}$, a subset $\E'\subset E(\Gamma')\setminus\{e_1',e_2'\}$ and a divisor $D'$ on $\Gamma'^{\E'}$ such that  $f(\E,D_s) = (\E'\cup\{e_1'\},D'+v_{e_1'})$ and $f(\E,D_t)=(\E'\cup\{e_2'\},D'+v_{e_2'})$. By induction hypothesis (recall that $|\E|=k-1$), we have that $\E'\cup\{e_1'\}$ and $\E'\cup\{e_2'\}$ are both equivalent to $f_E(\E)$, hence $\E'\cup\{e_1'\}$ and $\E'\cup\{e_2'\}$ are equivalent subsets of $E(\Gamma')$. We deduce that $\{e_1',e_2'\}$ is a special pair and hence $(\E'\cup\{e_1'\},D'+v_{e_1'})$ and $(\E'\cup\{e_2'\},D'+v_{e_2'})$ are equivalent pseudo-divisors of $\Gamma'$. However, the ranks of $(\E'\cup\{e_1'\},D'+v_{e_1'})$ and $ (\E'\cup\{e_2'\},D'+v_{e_2'})$ are equal to $k-1$, so by the induction hypothesis we get that $f^{-1}(\E'\cup\{e_1'\},D'+v_{e_1'})=(\E,D_s)$ and $f^{-1}(\E'\cup\{e_2'\},D'+v_{e_2'})=(\E,D_t)$ are equivalent pseudo-divisors of $\Gamma$. This implies that $D_s(v)=D_t(v)$ for every $v\in V(\Gamma)$, which contradicts Equation \eqref{eq:compare}.

We deduce that we are in the situation described in item (1) of Proposition \ref{prop:P0}. Then there exist parallel edges $e_1',e_2'$ of $\Gamma'$, a subset $\E'\subset E(\Gamma')\setminus \{e'_1,e'_2\}$ of $\Gamma'$, and a divisor $D'\in \Gamma'^{\E'}$ such that $(\E_1',D'_1)=(\E'\cup\{e_1'\},D'+v_{e'_1})$ and $(\E_2',D'_2)=(\E'\cup\{e_2'\},D'+v_{e'_2})$. Hence $D'_1(v)=D'_2(v)$ for every $v\in V(\Gamma)$, and so $D'_1$ and $D'_2$ satisfy Condition (1) of Definition \ref{def:sim}. Moreover:
\begin{enumerate}
    \item the subsets $\E'\cup\{e_1'\}$ and $f_E(\E\cup\{e_1\})$ of $E(\Gamma')$ are equivalent, by construction.
     \item the subsets $\E'\cup\{e_2'\}$ and $f_E(\E\cup\{e_2\})$ of $E(\Gamma)$ are equivalent, by construction.
     \item the subsets $f_E(\E\cup\{e_1\})$ and $f_E(\E\cup\{e_2\})$ of $E(\Gamma')$ are equivalent, since $f_E$ sends special pairs to special pairs.
\end{enumerate}
This implies that $\E'_1=\E'\cup\{e'_1\}$ and $\E'_2=\E'\cup\{e_2'\}$ are equivalent, and 
hence $f(\E_1,D_1)\sim f(\E_2,D_2)$, concluding the proof. 
\end{proof}

\begin{Def}
Let $f_E$ be as in Proposition \ref{prop:fE}. We let $h_f\col \QS_{v_0}(\Gamma)\to \QS_{v'_0}(\Gamma')$ be the function taking a pseudo-divisor $(\E,D)\in \QS_{v_0}(\Gamma)$ to
\[
h_f(\E,D):=(f_E(\E),D'),
\]
where $D'$ is the unique divisor in $\QS_{v_0'}(\Gamma', f_E(\E))$ such that $f(\E,D)\sim (f_E(\E),D')$ (see Proposition \ref{prop:fE}). 
\end{Def}

By definition, for every pseudo-divisor $(\E,D)\in \QS_{v_0}(\Gamma)$ we have 
\begin{equation}\label{eq:magic}
\epsilon_{\Gamma'}(h_f(\E,D))=f_E(\E).
\end{equation}

\begin{Prop}\label{prop:hf} 
The map $h_f\col \QS_{v_0}(\Gamma)\to \QS_{v'_0}(\Gamma')$ is an isomorphism of ranked posets.
\end{Prop}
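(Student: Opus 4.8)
The plan is to show that $h_f$ is a bijection, that it is order-preserving in both directions (hence a poset morphism with order-preserving inverse), and that it respects the rank function. First I would establish that $h_f$ is a bijection. The key observation is Equation \eqref{eq:magic}, which says $\epsilon_{\Gamma'}(h_f(\E,D)) = f_E(\E)$, together with the fact from Proposition \ref{prop:fE} that $f(\E,D) \sim h_f(\E,D)$. Since $f_E$ is a bijection on nondisconnecting edges and $f$ is itself a bijection, I would construct the inverse of $h_f$ directly using $f^{-1}$ and $f_E^{-1}$: define $h_{f^{-1}}\col \QS_{v'_0}(\Gamma') \to \QS_{v_0}(\Gamma)$ analogously, and check that $h_{f^{-1}} \circ h_f = \Id$ and $h_f \circ h_{f^{-1}} = \Id$. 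The verification uses that $f$ and $f^{-1}$ take equivalent pseudo-divisors to equivalent pseudo-divisors (Proposition \ref{prop:fE}) together with Remark \ref{rem:unique-equiv}, which guarantees that a pseudo-divisor is uniquely determined among its equivalents once its edge-set is fixed.

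Next I would verify that $h_f$ preserves rank. This is immediate: $\rk(\E,D) = |\E|$, and by Equation \eqref{eq:magic} we have $\rk(h_f(\E,D)) = |f_E(\E)| = |\E|$ because $f_E$ is a bijection on edges, so it takes a subset of $\ND(\Gamma)$ to a subset of $\ND(\Gamma')$ of the same cardinality. The substantive part is showing that $h_f$ is order-preserving. Suppose $(\E_1, D_1) \geq (\E_2, D_2)$ in $\QS_{v_0}(\Gamma)$; I must show $h_f(\E_1,D_1) \geq h_f(\E_2,D_2)$. By decomposing into elementary specializations it suffices to treat the case where $(\E_1,D_1)$ covers $(\E_2,D_2)$, i.e. $\E_2 = \E_1 \setminus \{e\}$. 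The idea is to compare $h_f$ with the genuine isomorphism $f$: we know $f(\E_i,D_i) \sim h_f(\E_i,D_i)$ and $f(\E_1,D_1) \geq f(\E_2,D_2)$ since $f$ is a poset isomorphism. Then I would invoke Remark \ref{rem:equivalent_cover}, which states precisely that a specialization relation between two pseudo-divisors is inherited by equivalent pseudo-divisors whose edge-sets satisfy the containment $\wt\E_2 \subset \E_2$. So the remaining task is to check $\epsilon_{\Gamma'}(h_f(\E_2,D_2)) \subset \epsilon_{\Gamma'}(h_f(\E_1,D_1))$, that is, $f_E(\E_2) \subset f_E(\E_1)$, which follows from $\E_2 \subset \E_1$ and $f_E$ being a function on edges.

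I expect the main obstacle to be the order-preserving step, specifically verifying the hypotheses of Remark \ref{rem:equivalent_cover} cleanly. The subtlety is that $f(\E_i,D_i)$ and $h_f(\E_i,D_i)$ have the \emph{same} divisor values on $V(\Gamma')$ but possibly different exceptional edge-sets $\epsilon_{\Gamma'}(f(\E_i,D_i))$ versus $f_E(\E_i)$ — these are merely equivalent, not equal. One must ensure that the containment needed for Remark \ref{rem:equivalent_cover}, namely $f_E(\E_2) \subset f_E(\E_1)$, genuinely holds even when special pairs are involved (where a single edge $e$ on the source may correspond, up to equivalence, to either member of a special pair on the target). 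Here I would lean on the careful analysis in the proof of Proposition \ref{prop:fE}, where conditions (1)--(3) list exactly how $f_E$ interacts with special pairs contained in a given edge-subset; the monotonicity $\E_2 \subset \E_1 \Rightarrow f_E(\E_2) \subset f_E(\E_1)$ as subsets of $E(\Gamma')$ is a direct consequence of $f_E$ being an honest function on $\ND(\Gamma)$. Once order preservation is established for $h_f$, the identical argument applied to $f^{-1}$ shows $h_{f^{-1}}$ is order-preserving, and since it is the two-sided inverse of $h_f$, we conclude $h_f$ is an isomorphism of ranked posets.
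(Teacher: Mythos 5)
Your proposal is correct and follows essentially the same route as the paper's proof: rank preservation via Equation \eqref{eq:magic}, order preservation via Remark \ref{rem:equivalent_cover} combined with the monotonicity of $f_E$ on subsets, and bijectivity via the two-sided inverse $h_{f^{-1}}$ checked through Proposition \ref{prop:fE} and Remark \ref{rem:unique-equiv}. The only cosmetic difference is that the paper first proves injectivity directly and deduces surjectivity by a cardinality count before verifying that $h_{f^{-1}}$ is the inverse, whereas you go straight to the two-sided inverse; both are fine.
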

\begin{proof}
Let us prove that $h_f$ is a bijection.
We begin by proving that $h_f$ is injective. Assume that $h_f(\E_1,D_1)=h_f(\E_2,D_2)$ for some pseudo-divisors $(\E_1,D_1)$ and $(\E_2,D_2)$ on $\Gamma$. Since $\epsilon_{\Gamma'}(h_f(\E_i,D_i))= f_E(\E_i)$, we have that $f_E(\E_1)=f_E(\E_2)$ which implies that $\E_1=\E_2=:\E$ (recall that $f_E$ is a bijection, see Proposition \ref{prop:fE}). Writing $(\E_i',D_i'):=f(\E,D_i)$ for $i=1,2$, we have that 
\[
(\E_1',D_1')\sim h_f(\E,D_1) = h_f(\E,D_2) \sim (\E_2',D_2'),
\]
hence $(\E_1',D_1')\sim (\E_2',D_2')$.  By Proposition \ref{prop:fE}, we have that 
\[
(\E,D_1) = f^{-1}(\E_1',D_1')\sim f^{-1}(\E_2',D_2') = (\E,D_2),
\]
and hence $D_1=D_2$ by Remark \ref{rem:unique-equiv}. This finishes the proof of the injectivity of $h_f$. Since $\QS_{v_0}(\Gamma)$ and $\QS_{v'_0}(\Gamma')$ are finite sets of the same cardinality, it follows that $h_f$ is bijective.\par 

Let us prove that $h_f$ is a morphism of ranked posets. It is clear that $h_f$ preserves the rank of pseudo-divisors. Assume that $(\E_1,D_1)\geq (\E_2,D_2)$ in $\QS_{v_0}(\Gamma)$. In particular, $\E_2\subset \E_1$. We have that
\begin{align*}
&f(\E_1,D_1)\geq f(\E_2,D_2), \\
& h_f(\E_i,D_i)\sim f(\E_i,D_i),\\
& \epsilon_{\Gamma'}(h_f(\E_2,D_2))=f_E(\E_2)\subset f_E(\E_1)=\epsilon_{\Gamma'}(h_f(\E_1,D_1)).
\end{align*}
Thus $h_f(\E_1,D_1)\geq h_f(\E_2,D_2)$ by Remark \ref{rem:equivalent_cover}, concluding the proof that $h_f$ is a morphism of ranked posets. \par 

Using the same reasoning, we also have that $h_{f^{-1}}\col \QS_{v'_0}(\Gamma')\to \QS_{v_0}(\Gamma)$ is a morphism of ranked posets. 
It remains to prove that $h_{f^{-1}}$ is the inverse of $h_f$. Fix $(\E',D')=h_f(\E,D)$. We have the following equivalences
\begin{enumerate}
    \item $h_{f^{-1}}(\E',D')\sim f^{-1}(\E',D')$, by the definition of $h_{f^{-1}}$.
    \item $f^{-1}(\E',D')\sim f^{-1}(f(\E,D))=(\E,D)$, because $(\E',D')\sim f(\E,D)$ by the definition of $h_f$ and because $f^{-1}$ takes equivalent divisors to equivalent divisors (see Proposition \ref{prop:fE}).
\end{enumerate}
Therefore $h_{f^{-1}}(\E',D')\sim (\E,D)$. 
By the definition of $h_f$ and $h_{f^{-1}}$, we have $\E'=f_E(\E)$ and $\epsilon_{\Gamma}(h_{f^{-1}}(\E',D'))=f_E^{-1}(\E')$. It follows that
\[
\epsilon_{\Gamma}(h_{f^{-1}}(\E',D'))=f_E^{-1}(\E')=\E=\epsilon_{\Gamma}(\E,D).
\]
 Hence $h_{f^{-1}}(\E',D')= (\E,D)$ by Remark \ref{rem:unique-equiv}. This finishes the proof.
\end{proof}

We now substitute the isomorphism $f\col \QS_{v_0}(\Gamma)\to \QS_{v'_0}(\Gamma')$ with $h_f\col \QS_{v_0}(\Gamma)\to \QS_{v'_0}(\Gamma')$, which is an isomorphism by Proposition \ref{prop:hf}. By Equation \eqref{eq:magic}, this allows us to use the following property:
\begin{equation}\label{eq:substitute}
\epsilon_{\Gamma'}(f(\E,D))=f_E(\E),
\end{equation}
for every pseudo-divisor $(\E,D)\in \QS_{v_0}(\Gamma)$.

\begin{Lem}\label{lem:tree}
Assume that $\Gamma$ is a tree and let $v_0$ be a vertex of $\Gamma$. 
Let $D$ be the divisor on $\Gamma$ such that 
\[
D(v)=\begin{cases}
0& \text{ if } v\neq v_0\\
-1& \text{ if } v=v_0\\
\end{cases}
\]
for every $v\in V(\Gamma)$.
Then $D$ is the unique element of $\QS_{v_0}(\Gamma)$.
\end{Lem}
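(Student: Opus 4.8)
The plan is first to reduce to the case $\E=\emptyset$. Since $\Gamma$ is a tree, every edge is a bridge, so $\ND(\Gamma)=\emptyset$; by Remark \ref{rem:AP_46}(1), any $(\E,D_1)\in\QS_{v_0}(\Gamma)$ satisfies $\E\subset\ND(\Gamma)=\emptyset$, whence $\E=\emptyset$. Thus I only have to describe the $v_0$-quasistable divisors supported on $\Gamma$ itself. Because $\Gamma$ is a pure tree, $g_\Gamma=b_1(\Gamma)=0$, so such divisors have degree $g_\Gamma-1=-1$, in particular $D(V(\Gamma))=-1$. By Remark \ref{rem:hemi} it then suffices to test the quasistability inequality on the hemispheres of $\Gamma$.

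Next I would analyze these hemispheres. For a tree a bond is a single edge, so the hemispheres of $\Gamma$ are exactly the two vertex sets of the components obtained by deleting one edge; for such a $V$ both $\Gamma(V)$ and $\Gamma(V^c)$ are subtrees, so $g_V=0$ and $\delta_V=1$. Consequently Equation \eqref{eq:integer} gives $\beta_{\Gamma,D}(V)=D(V)-g_V+1=D(V)+1$ for every hemisphere $V$. For the divisor $D$ in the statement one has $D(V)=-1$ if $v_0\in V$ and $D(V)=0$ if $v_0\notin V$, so $\beta_{\Gamma,D}(V)\ge 0$ in all cases, with strict inequality precisely when $v_0\notin V$. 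Hence $D$ is $v_0$-quasistable.

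Finally I would prove uniqueness. The quickest route is to observe that $\emptyset$ is a maximally nondisconnecting subset of the tree $\Gamma$, being the complement of the unique spanning tree $E(\Gamma)$; then Remark \ref{rem:AP_46}(5) applied with $\E=\emptyset$ (so that $\Gamma_\emptyset=\Gamma$ and $(\mu_{\can})_\emptyset=\mu_{\can}$) shows that $\QS_{v_0}(\Gamma)$ is a singleton, and the previous paragraph identifies its unique element as $D$. Alternatively, for a self-contained argument, I would root $\Gamma$ at $v_0$ and, for each $v\neq v_0$, use the hemisphere $T_v$ (the subtree hanging below $v$, cut off by the edge joining $v$ to its parent) together with its complement $T_v^c\ni v_0$: quasistability on $T_v$ forces $D(T_v)\ge 0$ (strict case, $v_0\notin T_v$), while quasistability on $T_v^c$ gives $D(T_v^c)\ge -1$, i.e. $-1-D(T_v)\ge -1$, i.e. $D(T_v)\le 0$; hence $D(T_v)=0$ for every $v\neq v_0$. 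An induction from the leaves, using $D(T_v)=D(v)+\sum_{c}D(T_c)$ over the children $c$ of $v$, then yields $D(v)=0$ for all $v\neq v_0$ and therefore $D(v_0)=-1$.

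I do not expect a serious obstacle here, as the computation is short. The only genuinely delicate point is the bookkeeping of the strict-versus-nonstrict inequalities according to whether $v_0\in V$: it is exactly the strict inequality for hemispheres avoiding $v_0$ that forces the value $-1$ to sit at $v_0$ rather than at any other vertex, so this is the step I would write out most carefully.
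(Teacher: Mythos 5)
Your proposal is correct and follows essentially the same route as the paper: both verify quasistability of the given divisor via the hemisphere criterion and Equation \eqref{eq:integer} (using $g_V=0$ for a pure tree), and both obtain uniqueness from Remark \ref{rem:AP_46}, since $\emptyset$ is a maximally nondisconnecting subset of a tree. Your preliminary reduction to $\E=\emptyset$ and the alternative rooted-tree uniqueness argument are harmless extras that the paper leaves implicit.
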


\begin{proof}
By Remark \ref{rem:AP_46}, the poset $\QS_{v_0}(\Gamma)$ is a singleton. So it is enough to prove that the divisor $D$ given by the formula in the statement is $v_0$-quasistable. By Equation \eqref{eq:integer} we have 
$\beta_{\Gamma,D}(V)
= D(V)- g_V+1$ for every  hemisphere $V\subset V(\Gamma)$. Since $\Gamma$ is a tree, we have $g_V = 0$ for every hemisphere $V\subset V(\Gamma)$. We also have
\[
D(V)=\begin{cases}
0& \text{ if } v_0\notin V,\\
-1& \text{ if } v_0\in V.\\
\end{cases}
\]
It follows that 
\[
\beta_{\Gamma,D}(V) = \begin{cases}
0 & \text{ if } v_0\in V, \\
1 & \text{ if } v_0\notin V.
\end{cases}
\]
This proves that $D$ is $v_0$-quasistable. 
\end{proof}

\begin{Lem}
\label{lem:vertex}
Let $v_1$ be a vertex of $\Gamma$ which is not an articulation vertex. Fix a maximally nondisconnecting subset $\mathcal E_1\subset E(\Gamma\setminus \{v_1\})$ of $\Gamma\setminus\{v_1\}$. There is a unique divisor $D_1$ in $\QS_{v_0}(\Gamma,\E_1)$ such that
\begin{equation}\label{eq:valence}
D_1(v_1)=\begin{cases}
\val(v_1)-1& \text{ if }v_1\neq v_0\\
\val(v_1)-2& \text{ if }v_1=v_0.
\end{cases}
\end{equation}
Moreover, for each $S\subsetneqq E(v_1)$ there exists a unique $D_S$ in $\QS_{v_0}(\Gamma,\E_1\cup S)$ such that $(\E_1,D_1)\leq (\E_1\cup S,D_S)$.
% Let $D_e$ be the unique divisor in $\QS_{v_0}(\Gamma, \E\cup E(v)\setminus \{e\})$ (see Lemma \ref{lem:constant_tree}). 
% Then there exists a divisor $D\in \QS_{v_0}(\Gamma,\E)$ such that $(\E,D)\leq (\E\cup E(v)\setminus \{e\},D_e)$ for every $e\in E(v)$.
\end{Lem}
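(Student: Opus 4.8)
The plan is to transport the whole statement to the graph $\Gamma_{\E_1}$, where it becomes a statement about a tree with one distinguished extra vertex. Since $v_1$ is not an articulation vertex, $\Gamma\setminus\{v_1\}$ is connected, so $\E_1$ is the complement of a spanning tree $T$ of $\Gamma\setminus\{v_1\}$; hence $\Gamma_{\E_1}$ is connected (in particular $\E_1$ is nondisconnecting) and is obtained from the tree $T$ on $W:=V(\Gamma)\setminus\{v_1\}$ by attaching $v_1$ through the edges of $E(v_1)$. By Remarks \ref{rem:AP_46} and \ref{rem:canonical-pol} I identify $\QS_{v_0}(\Gamma,\E_1)$ with $\QS_{v_0}(\Gamma_{\E_1})$ and, for every $S\subset E(v_1)$, $\QS_{v_0}(\Gamma,\E_1\cup S)$ with $\QS_{v_0}(\Gamma_{\E_1},S)$, always for the canonical polarization. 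A direct count gives $g_{\Gamma_{\E_1}}=|E(v_1)|-1$, so divisors in $\QS_{v_0}(\Gamma_{\E_1})$ have degree $\val(v_1)-2$ (here I use $\val(v_1)=|E(v_1)|$, i.e.\ that $v_1$ carries no loop, which is exactly what makes the value in \eqref{eq:valence} attainable).

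For the first assertion I observe that the value in \eqref{eq:valence} is the largest value of $D(v_1)$ on $\QS_{v_0}(\Gamma_{\E_1})$. Applying quasistability to the hemisphere $W$, whose induced subgraph $T$ has genus $0$, Equation \eqref{eq:integer} gives $\beta_{\Gamma_{\E_1},D}(W)=D(W)+1$, which is $\ge0$ and is $>0$ precisely when $v_1=v_0$; since $D(v_1)=\deg D-D(W)$, this yields the upper bounds $\val(v_1)-1$ and $\val(v_1)-2$. Prescribing $D_1(v_1)$ to be this maximum forces $D_1(W)=-1$ (respectively $0$). Testing quasistability on the subsets $V\subset W$ (again $g_V=0$) shows that $D_1|_W$ is a $v_0$-quasistable divisor of degree $-1$ on the tree $T$, hence equals $-v_0$ by Lemma \ref{lem:tree}; when $v_1=v_0$ the now-strict inequalities together with $D_1(W)=0$ force $D_1|_W=0$. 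This proves uniqueness. For existence I check directly that the resulting divisor is quasistable: the subsets $V\subset W$ are immediate, and for $V\ni v_1$ I write $g_V=k-p$, where $k$ is the number of edges joining $v_1$ to $U:=V\cap W$ and $p$ is the number of components of $T[U]$ they meet; using $\val(v_1)=k+k'$ with $k'$ the number of edges from $v_1$ to $W\setminus U$, Equation \eqref{eq:integer} collapses to $\beta_{\Gamma_{\E_1},D_1}(V)=k'+p+D_1(U)$, which is nonnegative, and strictly positive when $v_0\notin V$, because $v_1$ has at least one edge.

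For the second assertion I analyze the generizations of $(\emptyset,D_1)$ inside $\QS_{v_0}(\Gamma_{\E_1})$. If $(S,D_S)\ge(\emptyset,D_1)$, then $(S,D_S)$ specializes to $(\emptyset,D_1)$ by one elementary specialization over each $e\in S$, each sending the exceptional vertex $v_e$ either to $v_1$ or to its other end-vertex $w_e\in W$ (Remark \ref{rem:elementary}). Were some $v_e$ sent to $w_e$, then $D_S(w_e)\le D_1(w_e)-1$, so $D_S(w_e)\le-1$ in general and $\le-2$ when $w_e=v_0$ (recall $D_1$ vanishes on $W\setminus\{v_0\}$ and equals $-1$ at $v_0$); in either case $\beta(\{w_e\})=D_S(w_e)+1$ violates quasistability at $\{w_e\}$, which requires $\beta\ge0$, strictly if $w_e\ne v_0$. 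Hence every $v_e$ goes to $v_1$, and $D_S$ is uniquely determined by $D_S(v_1)=D_1(v_1)-|S|$, $D_S|_W=D_1|_W$ and $D_S(v_e)=1$. It remains to verify this $D_S$ is quasistable; repeating the genus computation of the previous paragraph for subsets $V\ni v_1$ (now possibly containing some vertices $v_e$), the quantity $\beta$ reduces to a nonnegative expression precisely because $E(v_1)\setminus S\ne\emptyset$: the properness $S\subsetneq E(v_1)$ guarantees that at least one non-exceptional edge of $v_1$ leaves $V$, supplying the positive contribution that offsets the term $D_1(U)=-1$.

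The only delicate point throughout is the genus bookkeeping for subsets containing $v_1$ (and, in the last step, some of the exceptional vertices $v_e$): one must check that the cycles counted by $g_V$ exactly absorb the large value $D(v_1)$. The decisive use of the hypothesis $S\subsetneq E(v_1)$ in securing quasistability of $D_S$ is the heart of the argument, and is where the properness of $S$ is indispensable.
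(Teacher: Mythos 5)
Your proof is correct, and it reaches the same reduction as the paper (pass to $\Gamma_{\E_1}$, i.e.\ assume $\E_1=\emptyset$ and $\Gamma\setminus\{v_1\}$ a tree, via Remarks \ref{rem:AP_46} and \ref{rem:canonical-pol}), but it diverges from there. The paper never computes $\beta$ directly: it first identifies the maximal elements of $\QS_{v_0}(\Gamma)$ as the pairs $(S_e,D_{S_e})$ with $S_e=E(v_1)\setminus\{e\}$, whose quasistability comes from Lemma \ref{lem:tree} applied to the tree $\Gamma_{S_e}$; then $D_1$ and each $D_S$ are manufactured as specializations of these maximal elements, so their quasistability is free (specializations of quasistable divisors are quasistable, Remark \ref{rem:AP_46}), and uniqueness is a degree count against $(S_e,D_{S_e})$. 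You instead prove an extremality statement (the value $\val(v_1)-1$, resp.\ $\val(v_1)-2$, is the maximum of $D(v_1)$ over $\QS_{v_0}(\Gamma_{\E_1})$, forced by the hemisphere $W$), pin down $D_1|_W$ by singleton constraints, and verify quasistability by hand via the genus formula $g_V=k-p$; similarly your uniqueness of $D_S$ comes from ruling out specializations $v_e\mapsto w_e\in W$ by singleton quasistability rather than from comparison with a maximal element. Both arguments are sound; the paper's buys economy (no genus bookkeeping at all), while yours makes explicit exactly where $S\subsetneqq E(v_1)$ enters (the leftover edge of $E(v_1)$ supplies the positive contribution to $\beta$) and surfaces the implicit hypothesis, present in both proofs, that $v_1$ carries no loop --- harmless in the application, since the lemma is invoked for biconnected graphs with at least three vertices. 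The one place where you should be slightly more careful is the displayed formula $\beta_{\Gamma_{\E_1},D_1}(V)=k'+p+D_1(U)$, which is the $v_1\neq v_0$ case; when $v_1=v_0$ it reads $k'+p-1+D_1(U)$ with $D_1(U)=0$, and the same dichotomy ($k'=0$ forces $p\geq 1$) closes the argument.
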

\begin{proof}
By Remark \ref{rem:AP_46} we can assume that $\E_1=\emptyset$ and $\Gamma\setminus\{v_1\}$ is a tree.

For $e\in E(v_1)$, we set $S_e:=E(v)\setminus \{e\}$.
By Lemma \ref{lem:tree}, there exists a unique divisor $D_{S_e}\in \QS_{v_0}(\Gamma,S_e)$ and we have $D_{S_e}(v_0)=-1$ and $D_{S_e}(u)=0$ for every $u\in V(\Gamma)\setminus\{v_0\}$. Note that $S_e$ is a maximally nondisconnecting subset of $\Gamma$ and, vice-versa, any maximally nondisconnecting subset of $\Gamma$ is of the form $S_e$ for some $e\in E(v_1)$. In particular, by Remark \ref{rem:AP_46} we have that $\{(S_e,D_{S_e})\}_{e\in E(v_1)}$ is the set of all  maximal elements of $\QS_{v_0}(\Gamma)$.

Set $D_1:=D_{S_e}-\sum_{\widetilde{e}\in S_e} v_{\widetilde{e}} + (\val(v)-1)v$. We have that $(\emptyset, D_1)$ is a specialization of $(S_e,D_{S_e})$ and, since $(S_e,D_{S_e})$ is $v_0$-quasistable, it follows that $(\emptyset, D_1)$ is $v_0$-quasistable as well (see Remark \ref{rem:AP_46}). This proves the existence of $D_1$. Note that $D_1$ is independent of the choice of $e\in E(v_1)$. \par 

On the other hand, if $\widetilde{D}_1$ is another such divisor, then $(\emptyset,\widetilde{D}_1)$ is smaller than a maximal element $(S_e,D_{S_e})$ for some $e\in E(v)$. By Lemma \ref{lem:tree} and Equation \ref{eq:valence}, we can write $\widetilde{D}_1(v_1)=D_{S_e}(v_1)+\val(v_1)-1$. Since  $|S_e|=\val(v_1)-1$, the only possible way for $(S_e,D_{S_e})$ to specialize to $(\emptyset,\widetilde{D}_1)$ is if $\widetilde{D}_1=D_{S_e}-\sum_{\widetilde{e}\in S_e} v_{\widetilde{e}} + (\val(v)-1)v$. This means that $\widetilde{D}_1=D_1$ and finishes the proof of the first statement.

Fix $S\subsetneqq E(v_1)$. There exists $e\in E(v_1)$ such that $S\subset S_e$. The divisor $D_S:=D_{S_e}-\sum_{\widetilde{e}\in S_e\setminus S}v_{\widetilde{e}}+|S_e\setminus S|v$ is independent of the choice of $e$ and satisfies $(\emptyset,D_1)\leq (S,D_S)$. Moreover, we have $D_S\in \QS_{v_0}(\Gamma,S)$, because the $v_0$-quasistable  pseudo-divisor $(S_e, D_{S_e})$ specializes to $(S, D_s)$ (see Remark \ref{rem:AP_46}). 

We claim that the divisor $(S,D_S)$ is the unique divisor in $\QS_{v_0}(\Gamma, S)$ such that   $(\emptyset,D_1)\le (S,D_S)$. Indeed, assume that $\widetilde{D}_S$ is another divisor in $\QS_{v_0}(\Gamma,S)$ such that $(\emptyset,D_1)\leq (S, \widetilde{D}_S)$. Then, there exists a maximal pseudo-divisor $(S_e,D_{S_e})$ that is greater than $(S,\widetilde{D}_S)$. This implies that \begin{equation}\label{eq:less}
\widetilde{D}_S(v_1)\leq D_{S_e}(v_1)+ \val(v_1)-|S|-1.
\end{equation}
On the other hand, since $(S,\widetilde{D}_S)$ is $v_0$-quasistable and greater than $(\emptyset,D_1)$, we must have that $\widetilde{D}_S(v_1)\geq D_{S_e}(v_1)+\val(v_1)-|S|-1$, and hence equality holds in Equation \eqref{eq:less}.    This implies that $\widetilde{D}_S=D_S$.
\end{proof}

\begin{Lem}\label{lem:bond}
 Let $V$ be a hemisphere of $\Gamma$. Let $\E_1\subset E(V,V)$ and $\E_2\subset E(V^c,V^c)$ be maximally nondisconnecting subsets of $\Gamma(V)$ and $\Gamma(V^c)$. Set $\E=\E_1\cup \E_2$. Let $D$ be a divisor in $\QS_{v_0}(\Gamma,\E)$ such that for each subset $S\subsetneqq E(V,V^c)$ there exists a unique $D_S\in \QS_{v_0}(\Gamma,\E\cup S)$ such that $(\E,D)\leq (\E\cup S,D_S)$. Then there exists a vertex $v_1\in V(\Gamma)$ that is incident to all edges in $E(V,V^c)$.
\end{Lem}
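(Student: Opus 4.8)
The plan is to reduce to the case $\E=\emptyset$ and then to extract from the poset hypothesis a well-defined ``direction'' $\phi(e)\in\{s_e,t_e\}$ for each cross edge $e\in E(V,V^c)$, and finally to show that all these directions point to one and the same vertex.

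\textbf{Reduction.} Since $\E$ is nondisconnecting (removing $\E$ leaves a spanning tree of $\Gamma(V)$, a spanning tree of $\Gamma(V^c)$, and all of $E(V,V^c)$, hence a connected graph), Remark \ref{rem:AP_46} together with Remark \ref{rem:canonical-pol} identifies $\QS_{v_0}(\Gamma,\E\cup S)$ with $\QS_{v_0}(\Gamma_\E,S)$ for every $S\subset E(V,V^c)$, compatibly with specializations. I would therefore replace $\Gamma$ by $\Gamma':=\Gamma_\E$ and $(\E,D)$ by its image $(\emptyset,D)$: now $\Gamma'(V)$ and $\Gamma'(V^c)$ are trees, $V$ is still a hemisphere with bond $E(V,V^c)$, and $(\emptyset,D)$ is $v_0$-quasistable on $\Gamma'$. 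Write $\delta:=\delta_V=|E(V,V^c)|$; if $\delta=1$ the conclusion is trivial, so assume $\delta\ge2$, and for a cross edge $e$ denote by $s_e\in V$ and $t_e\in V^c$ its end-vertices.

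\textbf{Directions.} For a single cross edge $e$ the set $\{e\}$ is a proper subset of $E(V,V^c)$, so the hypothesis yields a unique $D_{\{e\}}\in\QS_{v_0}(\Gamma',\{e\})$ with $(\emptyset,D)\le(\{e\},D_{\{e\}})$; by Remark \ref{rem:elementary} this forces $D_{\{e\}}=D+v_e-\phi(e)$ for a uniquely determined $\phi(e)\in\{s_e,t_e\}$. For an arbitrary proper $S$ the unique $D_S$ decomposes as $D_S=D+\sum_{e\in S}(v_e-\psi_S(e))$ with $\psi_S(e)\in\{s_e,t_e\}$, since each exceptional chip must fall to an end-vertex of its edge when one specializes $(S,D_S)$ down to $(\emptyset,D)$. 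The decisive point is that $\psi_S(e)=\phi(e)$ independently of $S$: specializing $(S,D_S)$ over the edges of $S\setminus\{e\}$ produces the quasistable pseudo-divisor $(\{e\},D+v_e-\psi_S(e))$, which still lies above $(\emptyset,D)$, so the uniqueness in the singleton case forces it to equal $(\{e\},D_{\{e\}})$. Hence $D_S=D+\sum_{e\in S}(v_e-\phi(e))$ for every proper $S$.

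\textbf{Conclusion.} For each cross edge $f$ the complement $S_f:=E(V,V^c)\setminus\{f\}$ is a maximally nondisconnecting subset of $\Gamma'$, because removing it leaves the spanning tree $\Gamma'(V)\cup\Gamma'(V^c)\cup\{f\}$. By Remark \ref{rem:AP_46} the set $\QS_{v_0}(\Gamma',S_f)$ is a singleton whose element is maximal, and by Lemma \ref{lem:tree} applied to that spanning tree it equals $\big(S_f,-v_0+\sum_{e\ne f}v_e\big)$. Comparing this with the formula $D_{S_f}=D+\sum_{e\ne f}(v_e-\phi(e))$ from the previous step and cancelling $\sum_{e\ne f}v_e$ gives
\[
D+v_0=\sum_{e\in E(V,V^c)\setminus\{f\}}\phi(e)\qquad\text{for every }f\in E(V,V^c).
\]
Subtracting the identities attached to two cross edges $f,f'$ yields $\phi(f)=\phi(f')$ as divisors, hence as vertices. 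Thus all directions coincide with a single vertex $v_1$, and since $\phi(f)\in\{s_f,t_f\}$ for every $f$, the vertex $v_1$ is an end-vertex of every edge of $E(V,V^c)$, which is exactly the asserted conclusion.

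\textbf{Main obstacle.} The only delicate step is the consistency $\psi_S(e)=\phi(e)$ in the second paragraph. Mere existence of lifts is not enough — one can in general write $D+v_0$ as endpoint-sums that differ from one $f$ to another (this is what happens when $E(V,V^c)$ is not a star) — and it is precisely the \emph{uniqueness} built into the hypothesis, combined with the compatibility of the lifts under specialization, that rigidifies the directions and makes the final subtraction legitimate.
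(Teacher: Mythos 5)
Your proof is correct and follows essentially the same route as the paper's: reduce to $\E=\emptyset$ so that both sides of the bond become trees, use the uniqueness hypothesis for the singletons $\{e\}$ to pin down the destination $\phi(e)$ of each exceptional chip under any specialization, and apply Lemma \ref{lem:tree} to the maximally nondisconnecting sets $E(V,V^c)\setminus\{f\}$. Your endgame is somewhat more streamlined --- the single divisor identity $\sum_{e\ne f}\phi(e)=D+v_0$ followed by a subtraction replaces the paper's two-step counting argument (first that all the destinations $u_e$ lie on the same side of the bond, then that they coincide), and your consistency step bypasses the paper's parallel-edge contradiction --- but the key ingredients are identical.
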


\begin{proof}
By Remark \ref{rem:AP_46}, we can assume that $\E=\emptyset$. This implies that $\Gamma(V)$ and $\Gamma(V^c)$ are trees. \par 

Consider $S\subsetneqq E(V,V^c)$ and the unique divisor $D_S$ of the statement. Let us prove that there exists only one specialization $(S,D_S)\to (\emptyset,D)$.
Assume, by contradiction, that there are two different specializations $\iota_1,\iota_2\col (S,D_S)\to (\emptyset,D)$. This implies that  we have distinct edges $e_1$ and $e_2$ in $S$ such that  $\iota_1(v_{e_1}),\iota_2(v_{e_2})\in V$ and $\iota_1(v_{e_2}),\iota_2(v_{e_1})\in V^c$ (note that the degrees of $(\iota_1)_*(D_S)=D$ and $(\iota_2)_*(D_S)=D$ in $V$ and $V^c$ are the same).\par 
Let $S_0:=\{e_1,e_2\}$. We can consider the specialization $\iota'_i\col \Gamma^{S}\ra \Gamma^{S_0}$ giving rise to a factorization
\[
\iota_i\col (S,D_S)\ra(S_0,(\iota'_i)_*(D_S))\stackrel{j_i}{\ra}(\emptyset,D).
\] 
Then $(\iota'_i)_*(D_S)=D_{S_0}$, by the uniqueness of $D_{S_0}$. Hence we get the specializations $j_1,j_2\colon (S_0, D_{S_0})\to (\emptyset, D)$. These specializations must be distinct because $j_1(v_{e_1}),j_2(v_{e_2})\in V$ and $j_1(v_{e_2}),j_2(v_{e_1})\in V^c$. Let $t_{e_i}$ and $s_{e_i}$ be the end-vertices of $e_i$, with $t_{e_i}\in V$. Thus
\begin{align*}
D=(j_1)_*(D_{S_0})&=D_{S_0}+t_{e_1}+s_{e_2}-v_{e_1}-v_{e_2}\\
D=(j_2)_*(D_{S_0})&=D_{S_0}+s_{e_1}+t_{e_2}-v_{e_1}-v_{e_2}.
\end{align*}
It follows that $t_{e_1}+s_{e_2}=s_{e_1}+t_{e_2}$, hence $t_{e_1}=t_{e_2}$ and $s_{e_1}=s_{e_2}$ (this means that $e_1,e_2$ are parallel edges).
Set $S_1:=\{e_1\}$. We see that the two pseudo-divisors $(S_1, D_{S_0}-v_{e_2}+t_{e_2})$ and $(S_1, D_{S_0}-v_{e_2}+s_{e_2})$ are both greater than $(\emptyset,D)$, which  contradicts the uniqueness of $D_{S_1}$. This proves that there exists a unique specialization $( S,D_S)\to (\emptyset,D)$, which we will denote by $\iota_S\colon (S,D_S)\to (\emptyset,D)$. \par 

For every $e\in E(V,V^c)$, we set $u_e:=\iota_{\{e\}}(v_e)\in V(\Gamma)$. Let us prove that $\iota_S(v_e)=u_e$, for every  $e\in E(V,V^c)$ and every subset $S\subsetneqq E(V,V^c)$ containing $e$. In fact, for every such edge $e$ and subset $S$, let $\iota'\colon ( S,D_S)\to (\{e\},D_{\{e\}})$ be the specialization factoring $\iota_S$ (note that this is unique by the uniqueness of $\iota_S$), i.e., such that $\iota_S=\iota_{\{e\}}\circ \iota'$. We see that $\iota_S(v_e)=\iota_{\{e\}}(v_e)=u_e$, as wanted. \par

Now we claim that, if $u_{e_0}\in V$ for some $e_0\in E(V,V^c)$, then $u_e\in V$ for every $e\in E(V,V^c)$. By contradiction, assume that there are edges $e_1,e_2$, such that $u_{e_1}\in V$ and $u_{e_2}\in V^c$. Set $S_i=E(V,V^c)\setminus \{e_i\}$ for $i=1,2$. Since
\begin{align*}
(\iota_{S_i})_*(D_{S_i})(V)&=D_{S_i}(V)+|\{e\in S_i, u_e\in V\}|
\end{align*}
and $(\iota_{S_i})_*(D_{S_i})(V)=D(V)$, we have that 
\[
D_{S_1}(V) = D_{S_2}(V) +1.
\]
However, since $S_i$ is a maximally nondisconnecting subset of $\Gamma$, Lemma \ref{lem:tree} implies that $D_{S_1}(V)=D_{S_2}(V)$, giving rise to a contradiction. This proves the claim.\par 

Finally, let us prove that $u_{e_1}=u_{e_2}$ for every $e_1,e_2\in E(V,V^c)$. As before, set $S_i=E(V,V^c)\setminus\{e_i\}$. By Lemma \ref{lem:tree}, we have that $D_{S_1}(v)=D_{S_2}(v)$ for every $v\in V(\Gamma)$, and 
\[
D(v)=D_{S_i}(v)+|\{e\in S_i;u_e=v\}|
\]
for every $v\in V(\Gamma)$. Hence, taking $v=u_{e_1}$, we have that
\[
|\{e\in S_1;u_e=u_{e_1}\}|=|\{e\in S_2;u_e=u_{e_1}\}|
\]
which implies that $u_{e_1}=u_{e_2}$. The conclusion is that, if we set $v_1:=u_e$ for some (every) edge $e\in E(V,V^c)$, then $v_1$ is incident to every $e\in E(V,V^c)$. 
\end{proof}

\begin{Thm}\label{thm:main1-biconnected}
Let $\Gamma$ and $\Gamma'$ be  biconnected pure graphs. The posets $\QS(\Gamma)$ and $\QS(\Gamma')$ are isomorphic if and only if there is an isomorphism between $\Gamma$ and $\Gamma'$.
\end{Thm}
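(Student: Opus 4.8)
The \emph{if} direction is straightforward: an isomorphism of pure graphs $\varphi\col\Gamma\to\Gamma'$ sends $(v_0,\mu_{\can})$-quasistable pseudo-divisors to $(\varphi(v_0),\mu_{\can})$-quasistable ones and commutes with specializations, so it induces an isomorphism $\QS_{v_0}(\Gamma)\cong\QS_{\varphi(v_0)}(\Gamma')$, and the claim follows from Proposition~\ref{prop:one-QD}. For the converse, the plan is to reconstruct a graph isomorphism out of the bijection on edges that $f$ already produces. Since $\Gamma$ and $\Gamma'$ are biconnected they have no bridges, so $\ND(\Gamma)=E(\Gamma)$ and $\ND(\Gamma')=E(\Gamma')$, and they have no articulation vertices. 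First I would invoke Propositions~\ref{prop:fE} and~\ref{prop:hf} to replace $f$ by $h_f$, so that $f$ is compatible with a cyclic equivalence $f_E\col E(\Gamma)\to E(\Gamma')$ in the strong sense that $\epsilon_{\Gamma'}(f(\E,D))=f_E(\E)$ for every $(\E,D)\in\QS_{v_0}(\Gamma)$.

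The conceptual heart of the proof will be a purely order-theoretic description of the \emph{stars} $E(v)$, $v\in V(\Gamma)$, among all bonds of $\Gamma$. Because no vertex is an articulation vertex (and, apart from the trivial single-loop graph, there are no loops), each $E(v)$ is the bond $E(\{v\},\{v\}^c)$, and conversely a star is always of this form. I would say that a pseudo-divisor $(\E,D)$ \emph{witnesses} a bond $\mathcal B=E(V,V^c)$ if $\E$ splits as a union of maximally nondisconnecting subsets of $\Gamma(V)$ and $\Gamma(V^c)$ and if, for every proper subset $S\subsetneqq\mathcal B$, there is a unique $D_S\in\QS_{v_0}(\Gamma,\E\cup S)$ with $(\E,D)\le(\E\cup S,D_S)$. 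The two lemmas already in hand provide the desired dictionary: Lemma~\ref{lem:vertex}, applied with $V=\{v\}$, shows that every star is witnessed, while Lemma~\ref{lem:bond} shows that a bond admitting a witness must be a star, i.e.\ all its edges meet in a common vertex.

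Next I would transport this characterization across $f$. Being a cyclic equivalence, $f_E$ carries bonds to bonds and maximally nondisconnecting subsets to maximally nondisconnecting subsets (Remarks~\ref{rem:cyclic_equiv_trees} and~\ref{rem:weak_cyclic_equiv_trees}). Given a star $E(v)$ with witness $(\E,D)$, its image $\mathcal B'=f_E(E(v))$ is a bond $E(W,W^c)$ of $\Gamma'$; since $f_E(\E)$ contains no edge of $\mathcal B'$ it splits along the two sides of $\mathcal B'$, and restricting the images $f_E(\E\cup S)$ of the spanning-tree complements $\E\cup S$ (for $S=E(v)\setminus\{e\}$) shows that these two pieces are maximally nondisconnecting in $\Gamma'(W)$ and $\Gamma'(W^c)$. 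The uniqueness clause transfers verbatim, because $\epsilon_{\Gamma'}(f(\E\cup S,D_S))=f_E(\E)\cup f_E(S)$ and $f$ is an order isomorphism, so counting elements above a fixed pseudo-divisor with prescribed edge-support is preserved. Hence $f(\E,D)$ witnesses $\mathcal B'$, and Lemma~\ref{lem:bond} furnishes a vertex $v'$ incident to all edges of $\mathcal B'$; as $\mathcal B'\subseteq E(v')$ are both bonds and a bond is a minimal cut, $\mathcal B'=E(v')$. This defines $\phi(v):=v'$, and running the same argument for $f^{-1}$ (whose associated cyclic equivalence is $f_E^{-1}$) shows that $\phi$ is a bijection $V(\Gamma)\to V(\Gamma')$.

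Finally I would check that $(\phi,f_E)$ is an isomorphism of graphs by verifying that $f_E$ respects incidences: if $e$ has end-vertices $u\neq v$, then $e\in E(u)\cap E(v)$, hence $f_E(e)\in E(\phi(u))\cap E(\phi(v))$, and since $\phi(u)\neq\phi(v)$ the edge $f_E(e)$ has end-vertices exactly $\phi(u)$ and $\phi(v)$. The step I expect to be most delicate is the transfer in the third paragraph: keeping precise track, through $f$ and $f_E$, of which pseudo-divisors sit above a witness and with which edge-support, and confirming that the two halves of $f_E(\E)$ really are maximally nondisconnecting on the correct sides of the new bond. The only genuinely special case is a graph on two vertices joined by parallel edges, where the center of a star is not unique; there any bijection of the two vertices is compatible with $f_E$, and the statement is immediate.
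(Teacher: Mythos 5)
Your proposal is correct and follows essentially the same route as the paper: it replaces $f$ by $h_f$ so that $\epsilon_{\Gamma'}\circ f=f_E\circ\epsilon_\Gamma$, uses Lemma~\ref{lem:vertex} to produce the distinguished pseudo-divisor attached to each star $E(v)$, transports the uniqueness property through the order isomorphism, and applies Lemma~\ref{lem:bond} plus biconnectedness to recover the vertex $v'$ with $E(v')=f_E(E(v))$. The ``witness'' packaging, the minimal-cut argument in place of the paper's articulation-vertex argument, and the explicit incidence check are only cosmetic variations.
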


\begin{proof}
Assume that $\QS(\Gamma)$ and $\QS(\Gamma')$ are isomorphic. Recall that we have identifications $\QS(\Gamma)\cong \QS_{v_0}(\Gamma)$ and $\QS(\Gamma')\cong \QS_{v'_0}(\Gamma')$ for $v_0\in V(\Gamma)$ and $v'_0\in V(\Gamma')$. Recall that we are given an isomorphism of posets $f\col \QS_{v_0}(\Gamma)\ra\QS_{v'_0}(\Gamma')$. 
Since $\Gamma$ and $\Gamma'$ are biconnected, they have no bridges, hence 
by Proposition \ref{prop:fE}, there is a cyclic equivalence $f_E\col E(\Gamma)\ra E(\Gamma')$.

Assume that $\Gamma$ has only one vertex. Then $\Gamma$ has at most one edge, hence $\Gamma'$ is isomorphic to $\Gamma$ because $f_E$ is a cyclic equivalence. The same argument holds if $\Gamma'$ has only one vertex.
Assume that $\Gamma$ has two vertices. Since $f_E$ is a cyclic equivalence and since every set of two edges of $\Gamma$ is a cycle, we must have that $\Gamma'$ also has two vertices and the same number of edges. So $\Gamma$ and $\Gamma'$ are isomorphic. The same argument holds if $\Gamma'$ has two vertices. So, we can assume that $\Gamma$ and $\Gamma'$ have at least three vertices.\par 

First we observe that if $S'$ is a subset of $E(\Gamma')$, then there exists at most one vertex $v'_1$ such that $E(v'_1)=S'$. Indeed, if there are distinct vertices $v'_1,v'_2$ such that $E(v'_1)=E(v'_2)=S'$, then either $\Gamma'$ is disconnected or $V(\Gamma')=\{v_1', v'_2\}$, which is a contradiction. \par 

To prove that $\Gamma$ and $\Gamma'$ are isomorphic, it is sufficient to prove that for every $v_1\in V(\Gamma)$ there exists a unique $v'_1\in V(\Gamma')$ such that $E(v'_1)=f_E(E(v_1))$. By the above observation, it is sufficent to prove that for every $v_1$, there exists a $v'_1\in V(\Gamma')$ such that $E(v'_1)=f_E(E(v_1))$. \par 

Fix $v_1\in V(\Gamma)$. Since $\Gamma$ is biconnected, we have that $v_1$ is not an articulation vertex. Let $\E_1\subset E(\Gamma\setminus\{v_1\})$ be a maximally nondisconnecting subset of $\Gamma\setminus \{v_1\}$. Let $D_1\in \QS_{v_0}(\Gamma,\E_1)$ be as in Lemma \ref{lem:vertex}. The same lemma states that  for each $S\subsetneqq E(v)$, there exists a unique $D_S\in \QS_{v_0}(\Gamma,\E_1\cup S)$ such that $(\E_1,D_1)\leq (\E_1\cup S,D_S)$.\par 

Since $E(v_1)$ is a bond of $\Gamma$ (recall that $\Gamma$ is biconnected) and $f_E$ is a cyclic equivalence, by Remark \ref{rem:cyclic_equiv_trees} we have that $f_E(E(v_1))$ is also a bond of $\Gamma'$, that is, there exists a hemisphere $V'\subset V(\Gamma')$ such that $f_E(E(v_1))=E(V',V'^c)$. Set $(\E'_1,D'_1)=f(\E_1,D_1)$. Since $f$ is an isomorphism and $\epsilon_{\Gamma'}\circ f = f_E\circ \epsilon_\Gamma$ (recall Equation \eqref{eq:substitute}), we have that for each $S'\subsetneqq E(V', V'^c)$ there exists a unique $D'_{S'}\in \QS_{v_0'}(\Gamma', \E'_1\cup S')$ such that $ (\E'_1,D'_1)\leq (\E'_1\cup S',D'_{S'})$. By Lemma \ref{lem:bond}, there exists a vertex $v'_1$ such that $E(V',V'^c)\subset E(v'_1)$. However, $\Gamma'$ is biconnected, that means that either $V'=\{v'_1\}$ or $V'^c=\{v'_1\}$, otherwise $v'_1$ would be an articulation vertex of $\Gamma'$. This means that $f_E(E(v_1))=E(v'_1)$ and we are done.\par 

If $\Gamma$ and $\Gamma'$ are isomorphic it is clear that $\QS(\Gamma)$ and $\QS(\Gamma')$ are isomorphic as well.
\end{proof}

\begin{Def}\label{def:biconnected_division}
    Let $v_0$ be an articulation vertex of a graph $\Gamma$. A pair of connected subgraphs $(\Gamma_1, \Gamma_2)$, with $E(\Gamma_1)\ne \emptyset$ and $E(\Gamma_2)\ne \emptyset$, are called     
    \emph{a split of $\Gamma$ with respect to $v_0$} if 
    \[
    \begin{array}{ll}
    V(\Gamma_1)\cap V(\Gamma_2)=\{v_0\}, & V(\Gamma_1)\cup V(\Gamma_2)=V(\Gamma),\\E(\Gamma_1)\cap E(\Gamma_2)=\emptyset, & E(\Gamma_1)\cup E(\Gamma_2)=E(\Gamma).
    \end{array}
    \]
\end{Def}

\begin{Rem}\label{rem:v0-induced}
It easy to check that, given an articulation vertex $v_0$, there always exists a split $(\Gamma_1,\Gamma_2)$ of $\Gamma$ with respect to $v_0$.  
Notice that the connected componets of $\Gamma_1\setminus\{v_0\}$ and $\Gamma_2\setminus \{v_0\}$ form a partition of the connected components of $\Gamma\setminus\{v_0\}$. Moreover, the biconnected components of $\Gamma_1$ and $\Gamma_2$ are biconnected components of $\Gamma$ and, conversely, every biconnected component of $\Gamma$ is a biconnected component of one between $\Gamma_1$ or $\Gamma_2$.
\end{Rem}

\begin{Prop}\label{prop:2-decomposition}
Let $\Gamma$ be a pure graph and 
$v_0$ an articulation vertex of  $\Gamma$. 
Let $(\Gamma_1,\Gamma_2)$ be a split of $\Gamma$ with respect to $v_0$. We have an isomorphism
\[
\sigma\col \QS_{v_0}(\Gamma_1)\times\QS_{v_0}(\Gamma_2)\stackrel{\cong}{\ra}\QS_{v_0}(\Gamma)
\]
taking a pair of pseudo-divisors $((\E_1,D_1),(\E_2,D_2))$ to $(\E_1\cup\E_2,D_1+D_2+v_0)$. Moreover, if $e\in E(\Gamma_1)$ and $(\E,D)\to (\E\setminus\{e\},\overline{D})$ is an elementary specialization in $\QS_{v_0}(\Gamma)$ over $e$, then $\sigma^{-1}(\E\setminus \{e\},\overline{D})=((\E_1\setminus\{e\},\overline{D}_1),(\E_2,D_2))$, where $(\E_1\setminus\{e\},\ol{D}_1)$ is an elementary specialization of $(\E_1,D_1)$ in $\QS_{v_0}(\Gamma_1)$ over $e$.
\end{Prop}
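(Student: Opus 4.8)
The plan is to verify directly that the prescribed map $\sigma$ is a well-defined bijection that preserves order in both directions, and to read off the ``Moreover'' statement from the way $\sigma^{-1}$ transports elementary specializations. Everything rests on one local computation of the stability function $\beta$. Write $\E=\E_1\cup\E_2$, $D=D_1+D_2+v_0$, and call $V((\Gamma_i)^{\E_i})$ the \emph{$i$-th side} of $V(\Gamma^\E)$; the two sides meet only in $v_0$ and their exceptional vertices are disjoint. Since $v_0$ is an articulation vertex, $\Gamma^\E$ is the union of $(\Gamma_1)^{\E_1}$ and $(\Gamma_2)^{\E_2}$ glued at the single vertex $v_0$, so $b_1$ is additive across $v_0$. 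Consequently, if $V$ is a hemisphere of $\Gamma^\E$ with $v_0\in V$ whose complement lies in the second side, then $V_2:=V\cap V((\Gamma_2)^{\E_2})$ is a hemisphere of $(\Gamma_2)^{\E_2}$ containing $v_0$ and $g_V=g_{\Gamma_1}+g_{V_2}$.

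Combining this with Equation \eqref{eq:integer}, the identity $\deg D_1=g_{\Gamma_1}-1$, and the bookkeeping $D(v_0)=D_1(v_0)+D_2(v_0)+1$ (the extra unit being the explicit $v_0$ in the formula for $D$), I would obtain
\[
\beta_{\Gamma^\E,D}(V)=D(V)-g_V+1=\beta_{(\Gamma_2)^{\E_2},D_2}(V_2),
\]
and, symmetrically, $\beta_{\Gamma^\E,D}(V)=\beta_{(\Gamma_1)^{\E_1},D_1}(V_1)$ when $V^c$ lies on the first side, and $\beta_{\Gamma^\E,D}(V)=\beta_{(\Gamma_i)^{\E_i},D_i}(V)$ when $v_0\notin V$ and $V$ lies entirely on the $i$-th side. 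The crucial combinatorial point is that, $v_0$ being a cut vertex, these exhaust all hemispheres of $\Gamma^\E$: any set that meets both sides but omits $v_0$ induces a disconnected subgraph, so for a hemisphere $V$ either $v_0\in V$ and $V^c$ sits on one side, or $v_0\notin V$ and $V$ sits on one side. By Remark \ref{rem:hemi} the displayed identity shows simultaneously that $\sigma$ is well defined and that the candidate inverse lands in $\QS_{v_0}(\Gamma_1)\times\QS_{v_0}(\Gamma_2)$: quasistability on $\Gamma$ is checked hemisphere by hemisphere exactly as quasistability on the two pieces, and the strict inequalities required when $v_0\notin V$ match on both sides.

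For bijectivity the only delicate point is injectivity, since $D_1(v_0)$ and $D_2(v_0)$ are not separately visible in $D=D_1+D_2+v_0$; they are, however, pinned down by the prescribed degrees $\deg D_i=g_{\Gamma_i}-1$. This lets me define $\sigma^{-1}$ explicitly by $\E_i=\E\cap E(\Gamma_i)$, $D_i(v)=D(v)$ for $v$ on the $i$-th side other than $v_0$ (so the condition $D_i(v_e)=1$ is inherited), and $D_i(v_0)$ fixed by the degree constraint; surjectivity is precisely the second half of the $\beta$-identity above. Order preservation I would check on elementary specializations: if $e\in\E_1$ is a non-loop edge with end-vertices $s,t$, an elementary specialization of $(\E_1,D_1)$ over $e$ replaces $v_e$ by $s$ or $t$ on the first side only, and applying $\sigma$ yields exactly the elementary specialization $(\E,D)\to(\E\setminus\{e\},D-v_e+s)$ of $\Gamma$ over $e$ (the loop case being analogous); conversely, because $v_0$ is an articulation vertex every edge contracted in a specialization of $\Gamma$ lies in a single side, so a specialization of $D$ splits into one of $D_1$ and one of $D_2$. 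Feeding an elementary specialization of $\Gamma$ over $e\in E(\Gamma_1)$ through the explicit $\sigma^{-1}$ then gives the ``Moreover'' statement verbatim.

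The hard part will be the bookkeeping at $v_0$ and at the exceptional vertices in the $\beta$-computation: one must track the extra $v_0$ in $D$, the ``$+1$'' discrepancy between $\mu_{\can}^\Gamma(v_0)$ and $\mu_{\can}^{\Gamma_1}(v_0)+\mu_{\can}^{\Gamma_2}(v_0)$, and the fact that each exceptional vertex belongs to a definite side, so that the different hemisphere shapes and their strict-versus-nonstrict inequalities line up exactly. Once this identity is in place, bijectivity and order preservation are essentially formal.
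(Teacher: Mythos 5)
Your proposal is correct, and for three of the four steps (well-definedness via the additivity of $\beta$ across the articulation vertex, injectivity via the degree constraints $\deg D_i=g_{\Gamma_i}-1$ pinning down $D_i(v_0)$, and order preservation checked on elementary specializations, which also yields the ``Moreover'' clause) it coincides with the paper's argument. The one genuine divergence is surjectivity: the paper proves it indirectly by a cardinality count, using that $|\QS_{v_0}(\Gamma)|$ equals $2^g$ times the number of spanning trees of $\Gamma$, that $2^g=2^{g_1}2^{g_2}$, and that spanning trees of $\Gamma$ are exactly unions of spanning trees of $\Gamma_1$ and $\Gamma_2$; you instead build $\sigma^{-1}$ explicitly ($\E_i=\E\cap E(\Gamma_i)$, $D_i=D$ away from $v_0$, $D_i(v_0)$ forced by degree) and verify quasistability of the pieces from the same $\beta$-identity. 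Your route is more self-contained, since it does not invoke the enumerative formula for $|\QS_{v_0}(\Gamma)|$ (which the paper takes from the literature), at the cost of one extra observation you should make explicit: to get $\beta_{\Gamma_i^{\E_i},D_i}(V_i)\ge 0$ for $V_i\ni v_0$ from quasistability of $D$, apply the identity to the hemisphere $V=V_i\cup V(\Gamma_{3-i}^{\E_{3-i}})$ of $\Gamma^{\E}$, where the contribution of the other side is $\deg D_{3-i}-g_{\Gamma_{3-i}}+1=0$. With that line added, your argument is complete and slightly stronger in the sense of producing the inverse map directly rather than deducing its existence from a bijection count.
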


\begin{proof}
Let $((\E_1,D_1)$, $(\E_2,D_2))$ and $(\E,D):=(\E_1\cup\E_2,D_1+D_2+v_0)$ be as in the statement.
Since $\Gamma$ is pure, we have $g=g_\Gamma=g_{\Gamma_1}+g_{\Gamma_2}$. Hence the degree of $D$ is $g-1$. Let us see that $\sigma$ is well-defined, proving that $(\E,D)\in \QS_{v_0}(\Gamma)$. We use Remark \ref{rem:hemi} and Equation \eqref{eq:integer}. Let $V\subset V(\Gamma^\E)$ be a hemisphere. Assume that $v_0\notin V$. Since $V$ is a hemisphere we have that $V\subset V(\Gamma_i)\setminus \{v_0\}$ for some $i=1,2$. In this case, we can assume without loss of generality that $i=1$, and we have that 
\[
\beta_{\Gamma^\E,D}(V)=\beta_{\Gamma_1^{\E_1},D_1}(V\cap V(\Gamma_1)) > 0.
\] 
On the other hand, if $v_0\in V$, then we have that
\[
\beta_{\Gamma^\E,D}(V)=\beta_{\Gamma_1^{\E_1},D_1}(V\cap V(\Gamma_1)) +\beta_{\Gamma_2^{\E_2},D_2}(V\cap V(\Gamma_2))\geq 0.
\]
This proves that $(\E,D)\in \QS_{v_0}(\Gamma)$ and hence the function $\sigma$ is well-defined.\par 

Given a specialization $(\E_i,D_i)\ra (\E'_i,D'_i)$ in $\QS_{v_0}(\Gamma_i)$ for every $i=1,2$, we have an induced specialization $\sigma((\E_1,D_1),(\E_2,D_2))\ra\sigma((\E'_1,D'_1),(\E'_2,D'_2))$ via the inclusions $\E_i\subset E(\Gamma)$ and $\E'_i\subset E(\Gamma)$. This implies that 
$\sigma$ is a morphism of posets.

Let us prove that $\sigma$ is injective. Assume that $\sigma((\E_1,D_1),(\E_2,D_2))=\sigma((\E_1',D_1'),(\E_2',D_2'))$. It is clear that $\E_1=\E_1'$ and $\E_2=\E_2'$. Moreover, it is also clear that for each $i=1,2$ and for each vertex $v\in V(\Gamma_i)\setminus \{v_0\}$, we have that $D_i(v)=D'_i(v)$. Since $D_i$ and $D'_i$ have the same degree, we have that $D_i(v_0)=D_i'(v_0)$ for $i=1,2$. Thus $D_1=D'_1$ and $D_2=D'_2$, as wanted.

Let us prove that $\sigma$ is surjective. Since we already know that $\sigma$ is injective, we need only to prove that the cardinalities of the domain and target of $\sigma$ are the same. The number of elements of $\QS_{v_0}(\Gamma)$ is $2^g$ times the number of spanning trees of $\Gamma$. Since $2^g=2^{g_1}\cdot 2^{g_2}$ and each spanning tree of $\Gamma$ is a union of  spanning trees of $\Gamma_1$ and $\Gamma_2$, the result follows.

 Finally, we show that $\sigma^{-1}$ is a morphism of posets. We start with an elementary specialization $(\E,D)\ra (\ol{\E},\ol{D})$ in $\QS_{v_0}(\Gamma)$. Let us show that $\sigma^{-1}(\ol{\E},\ol{D})\le \sigma^{-1}(\E,D)$. Since every specialization is a composition of elementary specializations, we can assume that $(\E,D)\ra (\ol{\E},\ol{D})$ is elementary. By Remark \ref{rem:elementary}, we can write  $(\ol{\E},\ol{D})=(\E\setminus \{e\},D-v_e+s)$, for some edge $e\in E(\Gamma)$ with end-vertex $s$. Assume that $e\in E(\Gamma_1)$. In particular, $s\in V(\Gamma_1)$. Set $((\E_1,D_1),(\E_2,D_2)):=\sigma^{-1}((\E,D))$. Consider the elementary specialization $(\E_1,D_1)\ra (\ol{\E}_1,\ol{D}_1)$ in $\QS_{v_0}(\Gamma_1)$, where $(\ol{\E}_1,\ol{D}_1)=(\E_1\setminus\{e\},D_1-v_e+s)$. Clearly we have  $\sigma((\ol{\E}_1,\ol{D}_1),(\E_2,D_2))=(\ol{\E},\ol{D})$. This proves that $\sigma^{-1}(\ol{\E},\ol{D})\leq \sigma^{-1}(\E,D)$, as wanted. Notice that we have also proved the last statement of the proposition.
\end{proof}

\begin{Cor}\label{cor:decomposition}
Given a pure graph $\Gamma$, we have an isomorphism
\[
\QS(\Gamma) \cong \prod \QS(\Gamma_i),
\]
where $\Gamma_i$ runs through all biconnected components of $\Gamma$.
\end{Cor}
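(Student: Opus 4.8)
The plan is to argue by induction on the number $b$ of biconnected components of $\Gamma$, using Proposition \ref{prop:2-decomposition} as the engine that peels off one articulation vertex at a time and Remark \ref{rem:v0-induced} to keep track of how the biconnected components are distributed among the two pieces of a split. Throughout, Proposition \ref{prop:one-QD} lets us change the base vertex freely inside each factor, so that the products $\QS(\Gamma_i)$ produced at different stages of the recursion can all be based at whatever articulation vertex is convenient.

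First I would normalize to the bridgeless case. By Remark \ref{rem:iso-bridge} we have $\QS(\Gamma)\cong\QS(\Gamma/\bridges(\Gamma))$, and contracting a bridge $e$ does not alter the set of biconnected components: since $e$ lies in no cycle, no biconnected component contains both end-vertices of $e$, so each biconnected component survives unchanged (up to relabelling its attaching vertex) under the contraction. Hence both sides of the claimed isomorphism are unaffected by passing to $\Gamma/\bridges(\Gamma)$, and we may assume $\Gamma$ is connected and bridgeless; in particular every edge now lies in a cycle and hence in a biconnected component. Note also that a split $(\Gamma_1,\Gamma_2)$ of a bridgeless graph again produces bridgeless pieces, because any cycle of $\Gamma$ must lie entirely in $\Gamma_1$ or in $\Gamma_2$ (a cycle cannot pass through the single shared vertex $v_0$ twice), so the induction stays within bridgeless graphs.

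For the base cases: if $b=0$ then $\Gamma$ has no edges, so it is a single vertex and $\QS(\Gamma)$ is a singleton by Lemma \ref{lem:tree}, matching the empty product; if $b=1$ then, being connected and bridgeless, $\Gamma$ coincides with its unique biconnected component $\Gamma_1$ and there is nothing to prove. For the inductive step assume $b\ge 2$. Then $\Gamma$ has an articulation vertex $v_0$, and by Remark \ref{rem:v0-induced} there is a split $(\Gamma_1,\Gamma_2)$ of $\Gamma$ with respect to $v_0$. Identifying $\QS(\Gamma)\cong\QS_{v_0}(\Gamma)$ via Proposition \ref{prop:one-QD}, Proposition \ref{prop:2-decomposition} gives
\[
\QS(\Gamma)\;\cong\;\QS_{v_0}(\Gamma_1)\times\QS_{v_0}(\Gamma_2).
\]
By Remark \ref{rem:v0-induced} the biconnected components of $\Gamma$ are exactly those of $\Gamma_1$ together with those of $\Gamma_2$; since $E(\Gamma_1),E(\Gamma_2)\neq\emptyset$, each $\Gamma_i$ has at least one and hence strictly fewer than $b$ biconnected components. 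Applying the induction hypothesis to $\Gamma_1$ and $\Gamma_2$ yields $\QS(\Gamma_i)\cong\prod_j\QS(\Gamma_{ij})$ over the biconnected components $\Gamma_{ij}$ of $\Gamma_i$, and substituting into the displayed isomorphism produces $\QS(\Gamma)\cong\prod_i\QS(\Gamma_i)$ over all biconnected components of $\Gamma$.

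The step I expect to require the most care is the existence of an articulation vertex (and hence of a split satisfying Definition \ref{def:biconnected_division}) when $b\ge 2$, under the conventions of this paper, where biconnectedness is tested on subdivisions: one must check that a connected graph with at least two biconnected components — including degenerate configurations such as several loops based at a common vertex — always admits a vertex $v_0$ at which a split exists, for instance by taking the cut vertex of a leaf block of the block--cut-vertex tree. The remaining points (that bridge contraction preserves biconnected components, that splits preserve bridgelessness, and that the component decompositions of Remark \ref{rem:v0-induced} assemble consistently) are routine bookkeeping.
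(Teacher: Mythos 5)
Your proof is correct and follows essentially the same route as the paper, which simply iterates Proposition \ref{prop:2-decomposition} by splitting at articulation vertices; you have supplied the bookkeeping (reduction to the bridgeless case via Remark \ref{rem:iso-bridge}, existence of an articulation vertex when there are at least two biconnected components, and the base cases) that the paper leaves implicit.
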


\begin{proof} 
The result readily follows from Proposition \ref{prop:2-decomposition}.
\end{proof}

We are now ready to prove Theorem \ref{thm:main1}.

\begin{proof}[Proof of Theorem \ref{thm:main1}]
Recall that we have reduced to the case where $\Gamma$ and $\Gamma'$ are pure graphs (recall Proposition \ref{prop:pure-reduction}).

Assume that there is a bijection between the biconnected components of $\Gamma/\bridges(\Gamma)$ and $\Gamma'/\bridges(\Gamma')$ such that the corresponding components are isomorphic. We must prove that $\QS(\Gamma)$ and $\QS(\Gamma')$ are isomorphic. By Remark \ref{rem:iso-bridge}, we need only to show that $\QS(\Gamma/\bridges(\Gamma))$ and $\QS(\Gamma'/\bridges(\Gamma'))$ are isomorphic. This clearly follows from Corollary \ref{cor:decomposition}.

Conversely, assume that $f\col \QS(\Gamma)\ra \QS(\Gamma')$ is an isomorphism. By Remark \ref{rem:iso-bridge} we can assume that $\Gamma$ and $\Gamma'$ have no bridges. Consider the cyclic equivalence $f_E\col E(\Gamma)\to E(\Gamma')$ given by Proposition \ref{prop:fE}. This induces a bijection between the sets of biconnected components of $\Gamma$ and $\Gamma'$. We proceed by induction on the number of biconnected components of $\Gamma$. If $\Gamma$ is biconnected, the result follows from Theorem \ref{thm:main1-biconnected}. 

Assume that $\Gamma$ is not biconnected. Let $v_0$ be an articulation vertex of $\Gamma$. Let $(\Gamma_1,\Gamma_2)$ be a split of $\Gamma$ with respect to $v_0$ (see  Definition \ref{def:biconnected_division}). Let $\Gamma_1'$ and $\Gamma_2'$ be the subgraphs of $\Gamma'$ such that $E(\Gamma'_i)=f_E(E(\Gamma_i))$. Since $f_E$ is a cyclic equivalence, there is an articulation vertex $v'_0$ of $\Gamma'$ such that $(\Gamma'_1, \Gamma'_2)$ is a split of $\Gamma'$ with respect to $v'_0$. Choose identifications $\QS(\Gamma)\cong \QS_{v_0}(\Gamma)$ and $\QS(\Gamma')\cong \QS_{v'_0}(\Gamma')$. Let 
\[
\sigma\col \QS_{v_0}(\Gamma_1)\times \QS_{v_0}(\Gamma_2)\to \QS_{v_0}(\Gamma)\]
\[
\sigma'\col \QS_{v_0'}(\Gamma_1')\times \QS_{v_0'}(\Gamma_2')\to \QS_{v'_0}(\Gamma')
\]
be the isomorphisms of Proposition  \ref{prop:2-decomposition}. Define  
\[
\overline{f}:=\sigma'^{-1}\circ f\circ \sigma \col\QS_{v_0}(\Gamma_1)\times \QS_{v_0}(\Gamma_2) \to \QS_{v_0'}(\Gamma_1')\times \QS_{v_0'}(\Gamma_2'),
\]
and let $\overline{f}_i\col \QS_{v_0}(\Gamma_1)\times \QS_{v_0}(\Gamma_2)\ra \QS_{v'_0}(\Gamma'_i)$ be the composition of $\overline{f}$ with the projection onto the $i$-th factor.

We claim that $\overline{f}_1((\E_1,D_1),(\E_2,D_2))$ is independent of $(\E_2,D_2)$ (and, similarly, $\overline{f}_2((\E_1,D_1),(\E_2,D_2))$ is independent of $(\E_1,D_1)$). The claim allows us to conclude the proof. Indeed, it implies that 
\[
\overline{f}((\E_1,D_1),(\E_2,D_2))=(f_1(\E_1,D_1),f_2(\E_2,D_2)),
\]
where $f_i\col \QS_{v_0}(\Gamma_i)\to \QS_{v'_0}(\Gamma'_i)$ is an isomorphism induced by 
$\overline{f}_i$.
We conclude the proof by the induction hypothesis, using Remark \ref{rem:v0-induced}.

 To prove the claim, let us start with an observation coming from Proposition \ref{prop:2-decomposition}. Let $(\E',D')\to (\E'\setminus\{e'\},\overline{D}')$ be an elementary specialization in $\QS_{v'_0}(\Gamma')$ with $e'\in E(\Gamma'_2)$. Set $((\E_1',D_1'),(\E_2',D_2')):=\sigma'^{-1}(\E',D')$. By Proposition \ref{prop:2-decomposition}, we have that $\sigma'^{-1}(\E'\setminus \{e'\},\overline{D}')=((\E_1',D_1'),(\E_2'\setminus\{e'\},\ol{D}_2')$, where $(\E_2'\setminus\{e'\},\ol{D}_2')$ is an elementary specialization of $(\E_2',D_2')$ in $\QS_{v'_0}(\Gamma'_2)$ over $e'$.

Now, we just note that if $(\E,D)\to (\E\setminus\{e\},\overline{D})$ is an elementary specialization in $\QS_{v_0}(\Gamma)$ over $e$, then $f(\E,D)\to f(\E\setminus\{e\},\overline{D})$ is an elementary specialization in $\QS_{v'_0}(\Gamma')$ over $f_E(e)$. In particular, if $(\E_2,D_2)\to (\E_2\setminus\{e\},\overline{D}_2)$ is an elementary specialization in $\QS_{v_0}(\Gamma_2)$ over $e\in E(\Gamma_2)$, then $\sigma((\E_1,D_1),(\E_2,D_2))\to \sigma((\E_1,D_2),(\E_2\setminus\{e\},\overline{D}_2))$ is an elementary specialization in $\QS_{v_0}(\Gamma)$ over $e$. Then,
\[
f\circ \sigma((\E_1,D_1),(\E_2,D_2))\to f\circ \sigma((\E_1,D_2),(\E_2\setminus\{e\},\overline{D}_2))
\]
is an elementary specialization in $\QS_{v'_0}(\Gamma')$ over $f_E(e)\in E(\Gamma_2')$. By the above observation, we have that $\overline{f}_1((\E_1,D_1),(\E_2,D_2))=\overline{f}_1((\E_1,D_1),(\E_2\setminus\{e\},\overline{D}_2))$. Since $\QS_{v_0}(\Gamma_2)$ is connected and any specialization is a composition of elementary specialization, we have that $\overline{f}_1((\E_1,D_1),(\E_2,D_2))$ is independent of $(\E_2,D_2)$ and we are done.
\end{proof}

\section{Torelli Theorem for tropical curves}

A \emph{metric graph} is a pair $(\Gamma,\ell)$, where $\Gamma=(E(\Gamma),V(\Gamma))$ is a graph and $\ell\col E(\Gamma)\ra \mathbb R_{>0}$ is a function. A tropical curve is a metric space obtained by gluing segments $[0,\ell(e)]$, for every $e\in E(\Gamma)$ at their end-vertices as prescribed by the combinatorial data of the graph. We call $(\Gamma,\ell)$ a \emph{model} of the tropical curve.

  Given a tropical curve $X$ associated to a metric graph $(\Gamma,\ell)$, we say that $(\Gamma,\ell)$ is the \emph{canonical model} of $X$ if $\Gamma$ has no vertices of valence $2$ or if $\Gamma$ is the graph with only one vertex and one edge.  The canonical model of a tropical curve $X$ is unique, and we write $(\Gamma_X,\ell_X)$ for the canonical model of $X$. 
A \emph{bridge} of a tropical curve $X$ is a bridge of the graph $\Gamma_X$. A \emph{biconneted} component of a tropical curve $X$ is the tropical curve with model $(\Gamma',\ell')$, where $\Gamma'$ is a biconnected component of $\Gamma_X$ and $\ell'$ is the restriction of $\ell$ to $\Gamma'$.
  
   A tropical curve has an associated tropical Jacobian $J(X)$, which was first introduced in \cite{MZ}. The tropical Jacobian $J(X)$ has the following structure as a polyhedral complex. For each pseudo-divisor $(\E,D)$ of $\Gamma_X$, let $\mathcal{P}_X(\E,D)=\prod_{e\in \E}[0,\ell(e)]$. For each specialization $(\E,D)\to (\E',D')$ there is an associated face morphism $\mathcal{P}_X(\E',D')\subset \mathcal{P}_X(\E,D)$. Fix $v_0\in V(\Gamma_X)$, and define 
\[
J^{\quasi}_{v_0}(X):=\lim\mathcal{P}_X(\E,D) 
\]
where the colimit is taken through all $(\E,D)\in \QS_{v_0}(\Gamma)$. 
 By \cite[Theorem 5.10]{APPLMS} we have that $J(X)$ and $J^{qs}_{v_0}(X)$ are homeomorphic. The structure of a polyhedral complex for the tropical Jacobian was first described in \cite{ABKS}, and was extended in \cite{APPLMS},  \cite{CPS} and \cite{Poly}.

By Proposition \ref{prop:one-QD}, we have that $J^{qs}_{v_0}(X)$ does not depends on $v_0$, so we denote it by $J^{qs}_{v_0}(X)$. 

The following result is a corollary of Theorem \ref{thm:main1}.

\begin{Thm}
\label{thm:main2}
    Let $X$ and $X'$ be tropical curves without bridges such that $J(X)$ and $J(X')$ are isomorphic as polyhedral complexes (with the structure of polyhedral complexes given by $\QS(\Gamma_X)$ and $\QS(\Gamma_{X'})$). Then, there is a bijection between the biconnected components of $X$ and $X'$ such that corresponding components are isomorphic.
\end{Thm}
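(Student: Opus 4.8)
The plan is to extract from a polyhedral-complex isomorphism $\Phi\col J(X)\to J(X')$ two things: a poset isomorphism, to which Theorem \ref{thm:main1} applies, and the metric data needed to promote the resulting graph isomorphisms to isomorphisms of tropical curves. First I would use the description $J(X)\cong J^{\quasi}_{v_0}(X)$ recalled before the statement: its cells are the boxes $\mathcal P_X(\E,D)=\prod_{e\in\E}[0,\ell_X(e)]$ indexed by $\QS_{v_0}(\Gamma_X)$, with the face $\mathcal P_X(\E',D')\subset \mathcal P_X(\E,D)$ attached to each specialization $(\E,D)\to(\E',D')$. Because $X$ has no bridges, $\Gamma_X=\Gamma_X/\bridges(\Gamma_X)$ and $\QS_{v_0}(\Gamma_X)=\QS(\Gamma_X)$. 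An isomorphism of polyhedral complexes carries cells to cells bijectively, preserving dimension (which equals the rank $|\E|$) and face inclusions, and hence induces an isomorphism of the face posets $f\col\QS(\Gamma_X)\to\QS(\Gamma_{X'})$.

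Applying Theorem \ref{thm:main1} to the bridgeless graphs $\Gamma_X$ and $\Gamma_{X'}$ then yields a bijection between their biconnected components with corresponding components isomorphic as pure graphs. By Proposition \ref{prop:fE} this is realized by a cyclic equivalence $f_E\col E(\Gamma_X)\to E(\Gamma_{X'})$ (recall $\ND(\Gamma_X)=E(\Gamma_X)$ here), and, as in the proof of Theorem \ref{thm:main1-biconnected}, the isomorphism of each biconnected component is induced by $f_E$ on edges together with the induced vertex map. Thus the combinatorial half of the statement is already in hand; what remains is to match edge lengths.

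Here I would exploit that $\Phi$ restricts to an isometry on each cell, so it sends every $1$-cell $\mathcal P_X(\{e\},D)$, a segment of length $\ell_X(e)$, to the $1$-cell $\mathcal P_{X'}(\E',D')$ with $(\E',D')=f(\{e\},D)$, a segment of the same length; writing $\E'=\{e'\}$ this gives $\ell_X(e)=\ell_{X'}(e')$. By Proposition \ref{prop:fE}, the edge $e'=\epsilon_{\Gamma'}(f(\{e\},D))$ equals $f_E(e)$ whenever $e$ lies in no special pair, and lies in $\{f_E(e),f_E(e_0)\}$ when $\{e,e_0\}$ is a special pair. Consequently $\ell_X(e)=\ell_{X'}(f_E(e))$ for every edge outside a special pair, while for a special pair $\{e,e_0\}$ the argument applied to $\Phi$ and to $\Phi^{-1}$ forces the equality of sets $\{\ell_X(e),\ell_X(e_0)\}=\{\ell_{X'}(f_E(e)),\ell_{X'}(f_E(e_0))\}$.

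Finally, since the two edges of a special pair are parallel, interchanging their images is a legitimate automorphism of the ambient biconnected component and does not affect the graph isomorphism produced above. I would therefore adjust the edge bijection within each special pair so that the set equality becomes edgewise length preservation: if $\ell_X(e)\neq\ell_X(e_0)$ choose the matching dictated by these values, and if $\ell_X(e)=\ell_X(e_0)$ the two target lengths both equal this common value, so either matching works. This turns each graph isomorphism of biconnected components into a length-preserving, hence tropical, isomorphism, completing the proof. The main obstacle is precisely this special-pair bookkeeping: the polyhedral-complex isomorphism only determines $f_E$ up to swapping the two edges of each special pair, and one must check that the metric can nonetheless be matched consistently edge by edge, which is exactly what the set equality of lengths guarantees.
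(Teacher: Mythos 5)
Your proposal is correct and follows essentially the same route as the paper: extract the poset isomorphism from the face structure, apply Theorem \ref{thm:main1}, read off edge lengths from the cells $\mathcal P_X(\E,D)$, and resolve the special-pair ambiguity by noting that the two parallel edges are exchanged by an automorphism. The only (immaterial) difference is that you obtain the multiset equality $\{\ell_X(e),\ell_X(e_0)\}=\{\ell_{X'}(f_E(e)),\ell_{X'}(f_E(e_0))\}$ by comparing $1$-cells under $\Phi$ and $\Phi^{-1}$, whereas the paper reads it directly off the $2$-cell $\mathcal P_X(\{e,e_0\},D)$.
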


\begin{proof}
An isomorphism $f_J\col J^{\quasi}(X)\ra J^{\quasi}(X')$ induces an isomorphism between $f\col \QS(\Gamma_X)\ra \QS(\Gamma_{X'})$ and hence, by Theorem \ref{thm:main1}, also isomorphisms between the biconnected components of $\Gamma_X$ and of $\Gamma_{X'}$. In particular, if $e\in E(\Gamma_X)$ is an edge not contained in any special pair and $D\in \QS(\Gamma, \{e\})$, we have that $f(\{e\},D)=(f_E(e), D')$ for some $D'\in \QS(\Gamma', \{f_E(e)\})$. Moreover, we also have that $f_J(\mathcal P_X(\{e\},D))=\mathcal P_{X'}(f_E(e),D')$. Since $\mathcal P_X(\{e\},D)$ is a segment with length $\ell(e)$, we have that $\ell(e)=\ell(f_E(e))$. If $\{e_1,e_2\}$ is a special pair, we have that $f_J(\mathcal P_{X}(\{e_1,e_2\},D))=\mathcal P_{X'}(\{f_E(e_1),f_E(e_2)\},D')$, which means that $\{\ell(e_1),\ell(e_2)\}=\{\ell(f_E(e_1)),\ell(f_E(e_1))\}$. Since $e_1,e_2$ are conjugated by an automorphism of $\Gamma_X$ and $f_E(e_1)$, $f_E(e_2)$ are conjugated by an automorphism of $\Gamma_{X'}$, we have that $X$ and $X'$ have isomorphic biconnected components.
\end{proof}

\bibliographystyle{amsalpha}
\bibliography{bibliography}

\end{document}